\newtheorem{prop}{Proposition}[section]
\newtheorem{teo}{Theorem}[section]
\newtheorem{lema}{Lemma}[section]
\newtheorem{coro}{Corollary}[section]
\theoremstyle{definition}
\newtheorem{rem}{Remark}[section]
\def\ep{\varepsilon}
\def\a{\alpha}
\def\R{\mathbb R}
\def\K{{\mathcal K}}
\def\T{{\mathcal T}}
\begin{document}
\title[A fully nonlocal semilinear problem]{A semilinear problem associated to the \\ space-time fractional heat equation in $\R^N$}

\author[C. Cort\'{a}zar,  F. Quir\'{o}s \and N. Wolanski]{Carmen Cort\'{a}zar,  Fernando  Quir\'{o}s, \and Noem\'{\i} Wolanski}

\address{Carmen Cort\'{a}zar\hfill\break\indent
	Departamento de Matem\'{a}tica, Pontificia Universidad Cat\'{o}lica de Chile
    \hfill\break\indent Santiago, Chile.} \email{{\tt ccortaza@mat.puc.cl} }

\address{Fernando Quir\'{o}s\hfill\break
    \indent Departamento  de Matem\'{a}ticas, Universidad Aut\'{o}noma de Madrid, \hfill\break
    \indent 28049-Madrid, Spain, \hfill\break
    \indent and Instituto de Ciencias Matem\'aticas ICMAT (CSIC-UAM-UCM-UC3M), \hfill\break
    \indent 28049-Madrid, Spain.} \email{{\tt fernando.quiros@uam.es} }

\address{Noem\'{\i} Wolanski \hfill\break\indent
	IMAS-UBA-CONICET,
    \hfill\break\indent Ciudad Universitaria, Pab. I,
    \hfill\break\indent (1428) Buenos Aires, Argentina.} \email{{\tt wolanski@dm.uba.ar} }

\keywords{Fully nonlocal semilinear heat equation, Caputo derivative, fractional Laplacian, blow-up, global existence, Fujita exponent.}

\subjclass[2020]{
35B44, 
35B33, 
35K58, 
35R11, 
35A01, 
35A02, 
35B60. 
}

\date{}

\begin{abstract} We study the fully nonlocal semilinear equation $\partial_t^\a u+(-\Delta)^\beta u=|u|^{p-1}u$, $p\ge1$, where $\partial_t^\a$ stands for the Caputo derivative of order $\alpha\in (0,1)$ and $(-\Delta)^\beta$, $\beta\in(0,1]$, is the usual $\beta$ power of the Laplacian. We prescribe an initial datum in $L^q(\mathbb{R}^N)$.

We give conditions ensuring the existence and uniqueness of a solution living in $L^q(\R^N)$ up to a maximal existence time $T$ that may be finite or infinite. If~$T$ is finite, the $L^q$ norm of the solution becomes unbounded as time approaches $T$, and $u$ is said to blow up in $L^q$. Otherwise, the solution is global in time.

For the case of nonnegative and nontrivial solutions, we give conditions on the initial datum that ensure either blow-up or global existence. It turns out that every nonnegative nontrivial solution in $L^q$ blows up in finite time if $1<p<p_f:=1+\frac{2\beta}N$ whereas if $p\ge p_f$ there are both solutions that blow up and global ones. The critical exponent $p_f$, which does not depend on $\alpha$, coincides with the Fujita exponent for the case $\alpha=1$, in which the time derivative is the standard (local) one. In contrast to the case $\alpha=1$, when $\alpha\in(0,1)$ the critical exponent $p=p_f$ falls within the situation in which global existence may occur. Our weakest condition for global existence and our condition for blow-up are both related to the size of the mean value of the initial datum in large balls.
\end{abstract}

\maketitle

\section{Introduction}
 \setcounter{equation}{0}

\subsection{Aim}

We study the \emph{fully nonlocal semilinear} problem
\begin{equation}\label{eq-problem}
 	\partial_t^\a u+(-\Delta)^\beta u=|u|^{p-1}u\quad\mbox{in }\R^N\times(0,T),\qquad
 	u(\cdot,0)=u_0\quad\mbox{in }\R^N,
\end{equation}
where $p\in[1,\infty)$. Here $\partial_t^\alpha$, $\alpha\in(0,1)$, denotes the so-called Caputo $\alpha$-derivative, defined for smooth functions by
$$
\displaystyle\partial_t^\alpha u(x,t)=\frac1{ \Gamma (1-\alpha)}\,\partial_t\int_0^t\frac{u(x,s)- u(x,0)}{(t-s)^{\alpha}}\,{\rm d}s,
$$
and $(-\Delta)^\beta$,  $\beta\in(0,1]$, is the usual $\beta$ power of the Laplacian, defined for smooth functions when $\beta\in(0,1)$ by
$$
\displaystyle (-\Delta)^\beta v(x)=c_{N,\beta}\operatorname{P.V.}\int_{\R^N}\frac{v(x)-v(y)}{|x-y|^{N+2\beta}}\,{\rm d}y.
$$
The positive normalization constant $c_{N,\beta}$ is chosen so that $(-\Delta)^{\beta}=\mathcal{F}^{-1}(|\cdot|^{2\beta}\mathcal{F})$, where $\mathcal{F}$ denotes Fourier transform. Problems like~\eqref{eq-problem}, nonlocal both in space and time, are useful to model situations with long-range interactions and memory effects; see for instance~\cite{Metzler-Klafter-2000}.

If a solution to~\eqref{eq-problem} is smooth enough, it is  given by Duhamel's type formula
\begin{equation}\label{eq-formula}
 		u(x,t)=\int_{\R^N} u_0(y)Z(x-y,t)\,{\rm d}y+\int_0^t\int_{\R^N}|u(y,s)|^{p-1}u(y,s)Y(x-y,t-s)\,{\rm d}y{\rm d}s,
\end{equation}
where $Z$ is the fundamental solution of the equation and
$Y={}^{\rm R}\partial_t^{1-\alpha}Z$; see~\cite{Kemppainen-Siljander-Zacher-2017}. Here, ${}^{\rm R}\partial_t^{\alpha}$ stands for the so-called Riemann-Liouville $\alpha$-derivative, defined by
$$
{}^{\rm R}\partial_t^\alpha u(x,t)=\frac1{ \Gamma (1-\alpha)}\,\partial_t\int_0^t\frac{u(x,s)}{(t-s)^{\alpha}}\,{\rm d}s.
$$
Throughout the article we consider $u_0\in L^q(\mathbb{R}^N)$, $q\in[1,\infty]$, and look for $L^q$-\emph{solutions} of~\eqref{eq-problem} with initial datum $u_0$ and existence time $T$, that is, functions $u\in L^\infty_{\rm loc}([0,T);L^q(\mathbb{R}^N))$ satisfying~\eqref{eq-formula} in $L^q(\mathbb{R}^N)$ for almost all $t\in(0,T)$, and such that
\begin{equation}
\label{eq:initial.datum.Lq}
\operatornamewithlimits{ess\,lim}_{t\to0^+}\|u(\cdot,t)-u_0\|_{L^q(\mathbb{R}^N)}=0.
\end{equation}

\begin{rem} The Caputo and the Riemann-Liouville $\alpha$-derivatives are related by
$$
\partial_t^\alpha u(\cdot,t)={}^{\rm R}\partial_t^\alpha (u(\cdot,t)-u(\cdot,0)).
$$
Thus, both derivatives coincide for functions with zero initial datum.
\end{rem}

Our main goal is to give conditions under which problem~\eqref{eq-problem} admits either  $L^q$-\emph{global} solutions, that is,  $L^q$-solutions with existence time $T=\infty$, or  $L^q$-solutions with \emph{blow-up} in finite time, that is, $L^q$-solutions such that $\limsup_{t\nearrow T}\|u(\cdot,t)\|_{L^q(\mathbb{R}^N)}=\infty$ for some $T\in(0,\infty)$.

In order to prove our results we use thoroughly the properties of $Z$ and $Y$. These kernels are not smooth in general and have a qualitative change of behavior across the critical dimensions $N=2\beta$ and $N=4\beta$. For the sake of simplicity, throughout this article we will only deal with the more involved case $N>4\beta$, for which the kernels are more singular. The other cases can be treated similarly. We recall the estimates on $Z$ and $Y$ required for our proofs, due to different authors, in Section~\ref{sect:estimates}.

\noindent\emph{Historical note. } The nonlocal derivative $\partial_t^\alpha$ has been introduced independently by many authors; see for instance \cite{Caputo-1967,Dzherbashyan-Nersesian-1968,Gerasimov-1948,Gross-1947,Rabotnov-1966}. It can be found even in a very old paper by Liouville~\cite{Liouville-1832}. However, we follow the common convention of naming it after Caputo only.

\subsection{Main results}

The first part of the paper is devoted to basic issues such as existence, uniqueness, prolongability and continuity. We assume always that $p\in[1,\infty)$.

We first  consider, in~Section~\ref{sect-local existence}, values of $q$ in the \emph{good range},
\begin{equation}
\label{eq:good.range}
\tag{GR}
q\in(\ell,\infty]\text{ with } q\ge p,\quad\text{where }
\ell:=\frac{N}{2\beta}(p-1).
\end{equation}
Notice that $q=\infty$ is always in the good range, while $q=1$ only belongs to it if $p=1$. If $q$ satisfies~\eqref{eq:good.range} we are able to prove the existence of a unique \emph{maximal} $L^q$-solution, defined up to a maximal positive existence time $T$, for any initial datum in~$L^q(\mathbb{R}^N)$. Moreover, $u\in C([0,T); L^q(\mathbb{R}^N))$. If $T$ is finite, the solution blows up. When $p=1$, $L^q$-solutions are always global in time, that is, $T=\infty$.

If $q\in[1,p)$, difficulties arise in the interpretation of~\eqref{eq-formula}, since $|u(\cdot,t)|^{p-1}u(\cdot,t)$ need not \emph{a priori} be in $L^q(\mathbb{R}^N)$. On the other hand, if $q\in[1,\ell]$, the problem is not expected to be well posed in $L^q(\mathbb{R}^N)$, even if $q\ge p$. In particular, we expect to have initial data in $L^q(\mathbb{R}^N)$ for which no $L^q$-solution exists; see Paragraph~\ref{subsubsect:instantaneous} below.  To face these difficulties, to guarantee the existence of a solution (that may not be unique; see Paragraph~\ref{subsubsect:nonuniqueness}) outside the good range we require further integrability to the initial data. Thus, in Section~\ref{sect:basic.theory.outside.GR}:
\begin{itemize}
\item If $q\in(\ell,p)$, we consider $u_0\in L^q(\mathbb{R}^N)\cap L^p(\mathbb{R}^N)$ and we show that the corresponding maximal $L^p$-solution is an $L^q$-solution with the same existence time.
\item If $q\in[1,\ell]$, we consider $u_0\in L^q(\mathbb{R}^N)\cap L^{\hat q}(\mathbb{R}^N)$ for some $\hat q$ in the good range and show that the corresponding maximal $L^{\hat q}$-solution is an $L^q$-solution with the same existence time.
\end{itemize}

In the second part of the paper we seek for conditions under which a nonnegative $L^q$-solution, $q\in[1,\infty]$, is either global in time or, on the contrary, blows up in finite time when $p\in(1,\infty)$. We find a critical exponent, $p_f:=1+\frac{2\beta}N$, which, by analogy with the local case, we call the \emph{Fujita exponent}, such that every nonnegative and nontrivial $L^q$-solution blows up in finite time if $p\in(1,p_f)$ whereas for $p\in[p_f,\infty)$ there are both global and blowing up solutions. We remark that the value of the Fujita exponent does not depend on the fractional order $\alpha$.
	
We begin our analysis of the existence of a Fujita exponent in Section \ref{sect-global existence}, where we construct global in time $L^q$-solutions for all $p\in [p_f,\infty)$ and $q\in[1,\infty]$.
We start by considering values of $q$ in the \emph{distinguished range},
\begin{equation}\label{eq:distinguished.range}
\tag{DR}
  q\in(\ell,\ell p]\text{ with } q\ge p
\end{equation}
(we recall that we are assuming now that $p\ge p_f$, so that $\ell\ge1$).
Such values of $q$ are contained in the good range~\eqref{eq:good.range}. Therefore, given any $u_0\in L^q(\mathbb{R}^N)$ there is a maximal $L^q$-solution to~\eqref{eq-problem}. In order to ensure that this $L^q$-solution is global in time we require in addition that $u_0\in L^\ell(\mathbb{R}^N)$ and one of the two following conditions:
\begin{align}
\label{eq:l.norm.small}
&\|u_0\|_{L^\ell(\mathbb{R}^N)}\quad\text{small},\\
\label{eq:average.condition}
&u_0\ge 0, \quad\text{with }
\sup_{R>R_0} R^{\frac\alpha{p-1}-\frac {N\alpha}{2\beta }}\int_{|x|<R^{\frac\alpha{2\beta}}}u_0(x)\,{\rm d}x<\delta,
\end{align}
where $R_0=R_0(\|u_0\|_{L^q(\mathbb{R}^N)})>0$ decreases with $\|u_0\|_{L^q(\mathbb{R}^N)}$ and $\delta>0$ is independent of $u_0$. Observe that for every $R>0$
\[
R^{\frac\alpha{p-1}-\frac{N\alpha}{2\beta }}\int_{|x|<R^{\frac\alpha{2\beta}}}u_0(x)\,{\rm d}x\le \omega_N^{\frac{\ell-1}{\ell}}\|u_0\|_{L^\ell(\R^N)},
\]
where $\omega_N$ is the volume of the unit ball in $\mathbb{R}^N$, showing that the second condition is of a weaker nature than the first one, except for $p=p_f$, when both conditions coincide, since in this case $\ell=1$ and $\frac1{p-1}-\frac{N}{2\beta }=0$.

To construct global solutions outside the distinguished range, we have to require further integrability to the initial datum. In particular:
\begin{itemize}
\item If $q\in(\ell,p)$, we consider $u_0\in L^\ell(\mathbb{R}^N)\cap L^p(\mathbb{R}^N)$ satisfying either~\eqref{eq:l.norm.small} or \eqref{eq:average.condition}, and prove that the corresponding global $L^p$-solution is a global $L^q$-solution.
\item If $q\in[1,\ell]$, we consider $u_0\in L^q(\mathbb{R}^N)\cap L^{\hat q}(\mathbb{R}^N)$ for some $\hat q$ in the good range, and satisfying either~\eqref{eq:l.norm.small} or \eqref{eq:average.condition}, and prove that the corresponding global $L^{\hat q}$-solution is a global $L^q$-solution.
\item If $q\in(\ell p,\infty)$, we consider $u_0\in L^\ell(\mathbb{R}^N)\cap L^q(\mathbb{R}^N)$ satisfying either~\eqref{eq:l.norm.small} or \eqref{eq:average.condition}, and prove that the corresponding global $L^{\ell p}$-solution is a global $L^q$-solution.
\end{itemize}

Finally, in Section~\ref{sect-blow up} we prove that there is a constant $C$ depending only on $N,\alpha,\beta$ and $p$ such that if $u_0\in L^q(\mathbb{R}^N)$ is nonnegative and
\[
R^{\frac\alpha{p-1}-\frac{N\alpha}{2\beta }}\int_{|x|<R^{\frac\alpha{2\beta}}}u_0(x)\,{\rm d}x>C \quad\text{for some }R>0,
\]
then problem~\eqref{eq-problem} cannot have a global $L^q$-solution with $u_0$ as initial datum.  This shows that condition~\eqref{eq:average.condition} for global existence is \lq\lq sharp''. As a consecuence, on the one hand there are $L^q$-solutions with blow-up in finite time for every $p\in(1,\infty)$ and $q\in[1,\infty]$, and on the other hand, since $\frac\alpha{p-1}-\frac N{2\beta }>0$ if $p\in(1,p_f)$, every nonnegative and nontrivial $L^q$-solution to problem~\eqref{eq-problem} blows up in finite time in this range of values of $p$. 	
 	
\subsection{Precedents}

In the case of the classical heat equation, $\alpha=1$, $\beta=1$, Fujita proved in his classical paper \cite{Fujita-1966} that problem~\eqref{eq-problem} admits no global $L^\infty$-solution when $p\in(1,1+2/N)$, while it has both global $L^\infty$-solutions and solutions which blow up in $L^\infty$ in finite time if $p>1+2/N$.  When $p=1+2/N$, it was shown later in \cite{Ha} (for $N=1,2$), \cite{Kobayashi-Sirao-Tanaka-1977} and \cite{AW}, that all nontrivial solutions to problem~\eqref{eq-problem} blow up in $L^\infty$ in finite time. The analysis in other $L^q$ spaces, $q\in[1,\infty)$, was performed in \cite{Weissler-1979,Weissler-1980,Weissler-1981}; see also~\cite{Giga-1986}. These results have been generalized to deal with some more general local situations, where the Laplacian is replaced by a different local diffusion operator and/or the reaction term $u^p$ by a nonlinearity $f(u)$. We refer the interested reader to the surveys~\cite{DL,GL,L} and the monographs~\cite{Quittner-Souplet-book,SGKM-book}.

In the nonlocal framework, but yet with a local time derivative, the Fujita exponent in $L^\infty$  for problem~\eqref{eq-problem} with $\alpha=1$ and $\beta\in(0,1)$ is $p_f=1+2\beta/N$; see~\cite{Nagasawa-Sirao-1969}, and also ~\cite{Sugitani-1975,BLW}. Other nonlocal diffusion operators have been considered for instance in~\cite{GarciaMelian-Quiros-2010}.

Notice that the Fujita exponents in the cases with $\alpha=1$ coincide with the ones for $\alpha\in(0,1)$. However, when the time derivative is nonlocal the critical exponent $p_f$ belongs to the case where there are nonnegative and nontrivial $L^q$-solutions that exist globally as well as those blowing up in finite time.

As for nonlocal time derivatives,~\cite{L-L} considers the Fujita phenomenon for an equation similar to~\eqref{eq-problem} involving the Caputo-Hadamard time derivative, instead of a Caputo derivative. On the other hand,~\cite{Z-S} analyzes~\eqref{eq-problem} with $\alpha\in(0,1)$ when $\beta=1$ and considers a somewhat different meaning of solution. Namely, the authors look at~\eqref{eq-formula} in terms of the operator giving the solution to the Cauchy problem for the classical heat equation, and strongly use the exponential decay in time of the norm of such operator. Also, due to their approach, their results are only valid  for continuous initial data that tend to zero at infinity. Our methods allow us to work with any $\beta\in(0,1]$ and much more general initial data. Of course, the  value that we obtain for the Fujita exponent (when $\beta=1$) is the same as the one in~\cite{Z-S}.  We point out that the condition in \cite{Z-S} for global existence when $p\ge p_f$  is the strong (not sharp) one~\eqref{eq:l.norm.small}.

For the fully nonlocal case, $\alpha,\beta\in(0,1)$, some (non-sharp) blow-up results  were obtained in~\cite{Kirane-Laskri-Tatar-2005} in the $L^\infty$ framework. For the linear case $p=1$ there is a  recent paper~\cite{Dipierro-Pellacci-Valdinoci-Verzini-2021} where the authors prove the existence and uniqueness of a fundamental solution (with initial datum a Dirac mass) to~\eqref{eq-problem}, and obtain interesting asymptotic information for general solutions when $N=1$, $\alpha\in[1/2,1)$ and $\beta\in(0,1/2)$.

Let us finally mention the paper~\cite{Ferreira-dePablo-2024}, that considers blow-up issues including the Fujita phenomenon, in $L^\infty(\mathbb{R}^N)$, for nonnegative solutions of a different fully fractional heat equation, namely $(\partial_t-\Delta u)^\alpha=u^p$, $p>0$, $\alpha\in(0,1)$.

\subsection{Some open problems}

\subsubsection{Small dimensions}
In this paper we consider the case of high dimensions, $N>4\beta$. Other dimensions can be treated in a similar way, since good estimates for the kernels $Z$ and $Y$ are available; see for instance~\cite{Kemppainen-Siljander-Zacher-2017,Kim-Lim-2016}. If $N<2\beta$ both kernels are bounded, and the proofs will be more similar to the ones for the case $\alpha=1$ in which the time derivative is local. In the critical cases $N=2\beta$ and $N=4\beta$ logarithmic behaviors occur, and the proofs may be a bit more involved.

\subsubsection{Instantaneous blow-up outside the good range}
\label{subsubsect:instantaneous}

In the local case, $\alpha=1$, $\beta=1$, if $q\in[1,\ell)$ there are some initial data $u_0\in L^q(\mathbb{R}^N)$ for which problem~\eqref{eq-problem} admits no local solution (for any positive existence time) in any reasonable weak sense~\cite{Brezis-Cazenave-1996,Weissler-1980,Weissler-1981}; see also~\cite{Laister-Robinson-Sierzega-VidalLopez-2016}. We expect this also to be the case in the nonlocal setting, hence the need of extra integrability conditions on the initial datum to ensure local existence. This interesting topic deserves being studied somewhere else.

\subsubsection{Lack of uniqueness outside the good range}
\label{subsubsect:nonuniqueness}

When $q\in[1,\ell)$ and $1\le \ell\le p-1$ (which means that $p>(N+2)/(N-2)$), problem~\eqref{eq-problem} does not have uniqueness of $L^q$-solutions in the local case $\alpha=1$, $\beta=1$ \cite{Haraux-Weissler-1982}; see also~\cite{Baras-1983,Brezis-Cazenave-1996,Matos-Terraneo-2003,Ni-Sacks-1985,Terraneo-2002}. It would be interesting to study the possibility of such phenomenon in our nonlocal setting.

\subsection{Notations. } For our analysis, it will be convenient to decompose solutions to~\eqref{eq-formula} as 	
\begin{equation}
\label{eq:definition.v.w.signed}
u(x,t)=\underbrace{\int_{\R^N}u_0(y)Z(x-y,t)\,{\rm d}y}_{\mathcal{L}_{u_0}(x,t)}+\underbrace{\int_0^t\int_{\R^N} |u(y,s)|^{p-1}u(y,s)Y(x-y,t-s)\,{\rm d}y{\rm d}s}_{\mathcal{N}_u(x,t)}.
\end{equation}
The \lq\lq linear'' part of the solution, $\mathcal{L}_{u_0}$, solves the linear problem
$$
\partial_t^\a u+(-\Delta)^\beta u=0\quad\text{in }\R^N\times(0,\infty),\qquad
 	u(\cdot,0)=u_0\quad\mbox{in }\R^N.
$$
The \lq\lq nonlinear part'', $\mathcal{N}_u(x,t)$, accounts for the effect of the reaction nonlinearity, which is responsible of the possibility of blow up when $p\in(1,\infty)$.
 	  	
We denote by $p_{\rm c}:=\frac{N}{N-2\beta}$ a critical exponent that will appear several times along the paper.

\section{Estimates for the kernels}
\label{sect:estimates}
\setcounter{equation}{0}

We recall here some estimates on the kernels $Z$ and $Y$ in~\eqref{eq-formula} that will be used throughout the paper. Here and in what follows we assume always, without further mention, that $N>4\beta$.

Both kernels $Z$ and $Y$ have a selfsimilar structure,
\begin{equation}\label{eq:ss.structure}
  Z(x,t)=t^{-\frac{N\alpha}{2\beta}}F(x t^{-\frac{\alpha}{2\beta}}),\qquad  Y(x,t)=t^{-(1-\alpha+\frac{N\alpha}{2\beta})}G(x t^{-\frac{\alpha}{2\beta}}),
\end{equation}
for certain radially decreasing positive profiles $F$ and $G$ that are singular at the origin for the dimensions that we are considering, and smooth away from it.
 		
For all $t>0$, $\|Z(\cdot,t)\|_{L^1(\R^N)}=1$ and
 		\begin{align}
            \label{eq:pointwise.estimate.Z}
            0<Z(x,t)&\le K \begin{cases}t^{-\a}|x|^{2\beta-N}\quad\hskip0.24cm&\mbox{if }|x|<t^{\frac\alpha{2\beta}},\\
 			t^\alpha |x|^{-(2\beta+N)}&\mbox{if }|x|>t^{\frac\alpha{2\beta}},\end{cases}
 				\\
            \notag
 			|Z_t(x,t)|&\le K\begin{cases}t^{-(1+\alpha)}|x|^{2\beta-N}&\mbox{if }|x|<t^{\frac\alpha{2\beta}},\\
 				t^{\alpha-1}|x|^{-(2\beta+N)}&\mbox{if }|x|>t^{\frac\alpha{2\beta}}.\end{cases}
\end{align}
The estimate for $Z$ close to the origin is sharp. Hence, $Z(\cdot,t)\in L^r(\mathbb{R}^N)$ if and only if $r\in[1,p_{\rm c})$, and in such case we have
\begin{align}
\label{eq-estimate Z}
\|Z(\cdot,t)\|_{L^r(\R^N)}\le Kt^{-\frac{N\alpha}{2\beta}(1-\frac1r)},\qquad&t>0,\\
\label{eq-estimate Zt}
\|Z(\cdot,t)-Z(\cdot,t_0)\|_{L^r(\R^N)}\le K(t-t_0)t_0^{-1-\frac{N\alpha}{2\beta}(1-\frac1r)},\qquad&t\ge t_0>0.
\end{align}	

\begin{rem}
\label{rk:initial.data.Lq} Let $u_0\in L^q(\mathbb{R}^N)$. Since $Z(\cdot,t)$ is an approximation of the identity as $t\to0^+$, then
$$
\|\mathcal{L}_{u_0}(\cdot,t)-u_0\|_{L^q(\mathbb{R}^N)}\to 0^+\quad\text{as }t\to0^+.
$$
Therefore, if $u$ satisfies~\eqref{eq-formula}, to prove~\eqref{eq:initial.datum.Lq} it is enough to check that
\begin{equation}
\label{eq:initial.data.Nu}
\operatornamewithlimits{ess\,lim}\limits_{t\to0^+}\|\mathcal{N}_u(\cdot,t)\|_{L^q(\mathbb{R}^N)}=0.
\end{equation}
\end{rem}

As for the kernel $Y$,
	\begin{align*}
	0<Y(x,t)&\le K \begin{cases}t^{-(1+\a)}|x|^{4\beta-N}&\mbox{if }|x|<t^{\frac\alpha{2\beta}},\\
		t^{2\alpha-1} |x|^{-(2\beta+N)}&\mbox{if }|x|>t^{\frac\alpha{2\beta}},\end{cases}
	\\
	|Y_t(x,t)|&\le K\begin{cases}t^{-(2+\alpha)}|x|^{4\beta-N}&\mbox{if }|x|<t^{\frac\alpha{2\beta}},\\
		t^{2\alpha-2}|x|^{-(2\beta+N)}&\mbox{if }|x|>t^{\frac{\alpha}{2\beta}}.\end{cases}
\end{align*} 	
The estimate for $Y$ close to the origin is sharp. Thus, $Y(\cdot,t)\in L^r(\mathbb{R}^N)$ if and only if $r\in[1,N/(N-4\beta))$, and in such case we have
\begin{align}
\label{eq-estimate Y}
\|Y(\cdot,t-s)\|_{L^r(\R^N)}\le K(t-s)^{-\sigma(r)},\qquad&0\le s<t,\\
\label{eq-estimate Yt}
\|Y(\cdot,t-s)-Y(\cdot,t_0-s)\|_{L^r(\R^N)}\le K(t-t_0)(t_0-s)^{-\sigma(r)-1},\qquad&0\le s<t_0\le t,
\end{align}	
where
$$
\sigma(r):=1-\alpha+\frac{N\alpha}{2\beta}(1-\frac1r).
$$
Notice that $\sigma(r)< 1$ if and only if $r<p_{\rm c}$. Hence, $Y(\cdot,t)\in L^1_{\rm loc}([0,\infty);L^r (\mathbb{R}^N))$ if and only if $r\in[1,p_{\rm c})$. This restriction on $r$ will appear several times in our analysis.

For these estimates, and  many more, on $Z$ and $Y$, see for instance~\cite{Eidelman-Kochubei-2004,Kemppainen-Siljander-Zacher-2017,Kim-Lim-2016,Kochubei-1990}.

\section{Basic theory in the good range}\label{sect-local existence}
\setcounter{equation}{0}

In this section we deal with values of $q$ satisfying~\eqref{eq:good.range}. In this good range we will be able to prove the existence of a unique maximal $L^q$-solution for any initial datum in~$L^q(\mathbb{R}^N)$, which is moreover a continuous curve in $L^q(\mathbb{R}^N)$ up to the maximal existence time. When the latter is finite, $L^q$-solutions become unbounded as they approach it.

\subsection{Local existence}
We start by proving, via a fixed point argument, that  problem~\eqref{eq-problem} has an $L^q$-solution for any $u_0\in L^q(\R^N)$ if $T$ is small.

\begin{teo}\label{teo-local existence} Let $p\in[1,\infty)$ and $q$ in the good range~\eqref{eq:good.range}. Given $u_0\in L^q(\R^N)$, there exists $t_0>0$ such that problem~\eqref{eq-formula} has a solution $u\in L^\infty(0,t_0;L^q(\mathbb{R}^N))$ satisfying~\eqref{eq:initial.datum.Lq}. If $u_0\ge0$, $u$ is nonnegative. When restricted to $(0,T)$, $T\in(0,t_0]$, $u$ is an $L^q$-solution to~\eqref{eq-problem} with existence time $T$.
\end{teo}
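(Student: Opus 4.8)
The plan is to set up a contraction mapping argument in the Banach space $X_{t_0}:=L^\infty(0,t_0;L^q(\mathbb{R}^N))$ with norm $\|u\|_{X_{t_0}}:=\operatorname*{ess\,sup}_{0<t<t_0}\|u(\cdot,t)\|_{L^q(\mathbb{R}^N)}$. Define the operator
\[
\Phi(u)(x,t):=\mathcal{L}_{u_0}(x,t)+\mathcal{N}_u(x,t),
\]
with $\mathcal{L}_{u_0}$ and $\mathcal{N}_u$ as in~\eqref{eq:definition.v.w.signed}, so that $L^q$-solutions are exactly the fixed points of $\Phi$. The goal is to show that for $t_0$ small enough, $\Phi$ maps a closed ball $B_M:=\{u\in X_{t_0}:\|u\|_{X_{t_0}}\le M\}$ into itself and is a contraction there, whence Banach's fixed point theorem yields a unique fixed point in $B_M$.

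\medskip

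First I would estimate the linear part. Since $\|Z(\cdot,t)\|_{L^1(\R^N)}=1$, Young's convolution inequality gives $\|\mathcal{L}_{u_0}(\cdot,t)\|_{L^q(\mathbb{R}^N)}\le\|u_0\|_{L^q(\mathbb{R}^N)}$ for all $t>0$, so $\|\mathcal{L}_{u_0}\|_{X_{t_0}}\le\|u_0\|_{L^q(\mathbb{R}^N)}=:C_0$. I would fix $M:=2C_0$. Next comes the nonlinear part, which is where the good-range hypothesis~\eqref{eq:good.range} is essential. Writing $g(u):=|u|^{p-1}u$, one has the pointwise bound $|g(u(\cdot,s))|\le|u(\cdot,s)|^p$, so $\|g(u(\cdot,s))\|_{L^{q/p}(\mathbb{R}^N)}=\|u(\cdot,s)\|_{L^q(\mathbb{R}^N)}^p$; note $q\ge p$ guarantees $q/p\ge1$, so this $L^{q/p}$ norm makes sense. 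Applying Young's inequality to the convolution $\mathcal{N}_u(\cdot,t)=\int_0^t Y(\cdot,t-s)*g(u(\cdot,s))\,{\rm d}s$ with the Hölder relation $1+\tfrac1q=\tfrac1r+\tfrac{p}q$, i.e. $\tfrac1r=1-\tfrac{p-1}q$, gives
\[
\|\mathcal{N}_u(\cdot,t)\|_{L^q(\mathbb{R}^N)}\le\int_0^t\|Y(\cdot,t-s)\|_{L^r(\R^N)}\,\|u(\cdot,s)\|_{L^q(\mathbb{R}^N)}^p\,{\rm d}s\le K\,\|u\|_{X_{t_0}}^p\int_0^t(t-s)^{-\sigma(r)}\,{\rm d}s,
\]
using~\eqref{eq-estimate Y}. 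For this to be finite and go to zero with $t_0$ I need $\sigma(r)<1$, equivalently $r<p_{\rm c}$; a direct computation shows $r<p_{\rm c}$ is exactly the condition $q>\ell=\tfrac{N}{2\beta}(p-1)$, which is precisely the good-range assumption. (When $p=1$ this is trivial since then $r=1$ and $\sigma(1)=1-\alpha<1$.) The time integral equals $\frac{t^{1-\sigma(r)}}{1-\sigma(r)}$, so
\[
\|\mathcal{N}_u\|_{X_{t_0}}\le \frac{K}{1-\sigma(r)}\,t_0^{1-\sigma(r)}\,\|u\|_{X_{t_0}}^p.
\]
Choosing $t_0$ small enough that $\tfrac{K}{1-\sigma(r)}t_0^{1-\sigma(r)}M^p\le C_0$ yields $\|\Phi(u)\|_{X_{t_0}}\le C_0+C_0=M$, so $\Phi(B_M)\subseteq B_M$.

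\medskip

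For the contraction estimate, given $u,v\in B_M$ I would use the elementary inequality $\big||u|^{p-1}u-|v|^{p-1}v\big|\le p\,(|u|^{p-1}+|v|^{p-1})\,|u-v|$ together with Hölder's inequality (splitting the exponent so that the factors $|u|^{p-1},|v|^{p-1}$ land in $L^{q/(p-1)}$ and $u-v$ in $L^q$) to obtain
\[
\|g(u(\cdot,s))-g(v(\cdot,s))\|_{L^{q/p}(\mathbb{R}^N)}\le p\big(\|u(\cdot,s)\|_{L^q}^{p-1}+\|v(\cdot,s)\|_{L^q}^{p-1}\big)\|u(\cdot,s)-v(\cdot,s)\|_{L^q}\le 2pM^{p-1}\|u-v\|_{X_{t_0}}.
\]
Running the same Young/$\sigma(r)$ estimate as before then gives $\|\Phi(u)-\Phi(v)\|_{X_{t_0}}\le \frac{2pKM^{p-1}}{1-\sigma(r)}t_0^{1-\sigma(r)}\|u-v\|_{X_{t_0}}$, and by possibly shrinking $t_0$ further I make the constant $\le\tfrac12$, securing the contraction. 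The fixed point $u\in B_M\subseteq X_{t_0}$ is the desired solution. Nonnegativity when $u_0\ge0$ follows because $Z,Y>0$, so $\Phi$ maps the (closed, hence complete) subset of nonnegative elements of $B_M$ into itself, forcing the fixed point to be nonnegative there; I would run the contraction directly in that subset. The initial-datum condition~\eqref{eq:initial.datum.Lq} reduces, by Remark~\ref{rk:initial.data.Lq}, to checking $\|\mathcal{N}_u(\cdot,t)\|_{L^q(\mathbb{R}^N)}\to0$, which is immediate from the bound $\|\mathcal{N}_u(\cdot,t)\|_{L^q}\le \tfrac{K}{1-\sigma(r)}t^{1-\sigma(r)}M^p\to0$ as $t\to0^+$. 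Finally, restricting to any $T\in(0,t_0]$ yields an $L^q$-solution with existence time $T$ by definition. The main obstacle — and the only place where the hypotheses enter nontrivially — is verifying that the good-range condition~\eqref{eq:good.range} is exactly what makes the conjugate exponent $r$ satisfy $r<p_{\rm c}$, i.e. $\sigma(r)<1$, so that the singular-in-time kernel $Y$ is locally integrable in time in the $L^r$ norm; everything else is a routine Young/Hölder bookkeeping once that integrability is in place.
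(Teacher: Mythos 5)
Your proposal is correct and follows essentially the same route as the paper: a Banach fixed-point argument in a ball of $L^\infty(0,t_0;L^q(\mathbb{R}^N))$, with the same choice of conjugate exponent $r$ (determined by $1-\tfrac1r=\tfrac{p}{q}-\tfrac1q$) so that the good-range condition $q>\ell$ is exactly $r<p_{\rm c}$, i.e.\ $\sigma(r)<1$, and the same treatment of the contraction estimate, nonnegativity (by restricting to the nonnegative closed subset), and the initial condition via Remark~\ref{rk:initial.data.Lq}. No gaps.
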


\begin{proof}
Let $C_0=2\|u_0\|_{L^{ q }(\R^N)}$ and $t_0>0$ to be chosen later. For $v\in L^\infty(0,t_0; L^{ q }(\R^N))$,  let
\begin{equation*}\label{eq-fixed point}
		\T v(x,t)=\int_{\R^N} u_0(y)Z(x-y,t)\,{\rm d}y+\int_0^t\int_{\R^N}|v(y,s)|^{p-1}v(y,s)Y(x-y,t-s)\,{\rm d}y{\rm d}s.
\end{equation*}
Our aim is to prove that $\T$ has a fixed point in
\begin{equation*}\label{eq-fixed point set}
 	\K=\{v\in L^\infty\big(0,t_0; L^{ q }(\R^N)\big),\,\vvvert v\vvvert :=\sup_{t\in(0,t_0)}\|v(\cdot,t)\|_{L^{ q }(\R^N)}\le C_0 \}
\end{equation*}
if $t_0$ is small enough. Observe that $\K$ is a closed metric space with the metric $d(v,\hat v):=\vvvert v-\hat v\vvvert$.

We first prove that $\T(\K)\subset\K$ if $t_0$ is small enough. Let $r$ be such that
 \begin{equation}
 \label{eq:choice.r}
 1-\frac1r=\frac pq-\frac1q.
 \end{equation}
Since $p\ge 1$ and $q>\ell$, then $0\le 1-\frac1r<\frac{2\beta}N$, or equivalently, $r\in[1,p_{\rm c})$, whence estimate \eqref{eq-estimate Y} holds and $\sigma(r)<1$. Thus, for $0\le t\le t_0$,
\begin{equation}
\label{eq:in.the.space}
\begin{aligned}
 	\|\T v(\cdot,t)\|_{L^{ q }(\R^N)}&\le \|u_0\|_{L^{ q }(\R^N)}+\int_0^t\||v(\cdot,s)|^p\|_{L^{\frac  q  p}(\R^N)}\|Y(\cdot,t-s)\|_{L^r(\R^N)}\,{\rm d}s\\
 	&\le \frac{C_0}2
 	+C_0^pK\int_0^t(t-s)^{-\sigma(r)}\,{\rm d}s\le C_0\Big(\frac12+\frac{C_0^{p-1}K}{1-\sigma(r)}t_0^{1-\sigma(r)}\Big)\le C_0
 \end{aligned}
 \end{equation}
if $t_0$ is small, where we have used also that $\|Z(\cdot,t)\|_{L^1(\R^N)}=1$ for all $t>0$.

Let us check now that $\T$ is a strict contraction in $\K$ if $t_0$ is small. Let $v_1,v_2\in \K$. By \eqref{eq-estimate Y},
\[\begin{aligned}
	\|\T v_1(\cdot,t)&-\T v_2(\cdot,t)\|_{L^{ q }(\R^N)}\\
    &\le K\int_0^t\||v_1(\cdot,s)|^{p-1}v_1(\cdot,s)-|v_2(\cdot,s)|^{p-1}v_2(\cdot,s)\|_{L^{\frac q  p}(\R^N)}(t-s)^{-\sigma(r)}\,{\rm d}s\\
    &\le K \sup_{s\in(0,t_0)}\||v_1(\cdot,s)|^{p-1}v_1(\cdot,s)-|v_2(\cdot,s)|^{p-1}v_2(\cdot,s)\|_{L^{\frac q  p}(\R^N)}\frac{t^{1-\sigma(r)}}{1-\sigma(r)}
\end{aligned}\]
if $0<t\le t_0$. On the other hand, if $p>1$,
$$
\begin{aligned}\||v_1(\cdot,s)|^{p-1}v_1(\cdot,s)&-|v_2(\cdot,s)|^{p-1}v_2(\cdot,s)\|_{L^{\frac q  p}(\R^N)}\\
		&\le p\|(|v_1(\cdot,s)|+|v_2(\cdot,s)|)^{p-1}|v_1(\cdot,s)-v_2(\cdot,s)|\|_{L^{\frac q  p}(\R^N)}\\
	&\le p\||v_1(\cdot,s)|+|v_2(\cdot,s)|\|_{L^{ q }(\R^N)}^{p-1}\|v_1(\cdot,s)-v_2(\cdot,s)\|_{L^{ q }(\R^N)},
\end{aligned}
$$
so that
\begin{equation}\label{eq-estimate difference powers}\begin{aligned}
\sup_{s\in(0,t_0)}\||v_1(\cdot,s)|^{p-1}v_1(\cdot,s)-|v_2(\cdot,s)|^{p-1}v_2(\cdot,s)\|_{L^{\frac q  p}(\R^N)}\le p(2C_0)^{p-1}\|v_1(\cdot,s)-v_2(\cdot,s)\|_{L^{ q }(\R^N)},
\end{aligned} \end{equation}
an inequality which is trivially true for $p=1$. Hence, for every $p\ge 1$, $0<t\le t_0$,
\begin{equation}
\label{eq:contraction}
	\|\T v_1(\cdot,t)-\T v_2(\cdot,t)\|_{L^{ q }(\R^N)}\le pK (2C_0)^{p-1} \frac{t_0^{1-\sigma(r)}}{1-\sigma(r)}\vvvert v_1-v_2\vvvert \le \frac12 \vvvert v_1-v_2\vvvert
\end{equation}
if $t_0$ is small enough.  Hence, $\T$ has a unique fixed point $u$ in $\K$, which solves~\eqref{eq-formula}. Moreover,
\[\begin{aligned}
 	\|\mathcal{N}_u(\cdot,t)\|_{L^{ q }(\R^N)}&\le \int_0^t\||u(\cdot,s)|^p\|_{L^{\frac  q  p}(\R^N)}\|Y(\cdot,t-s)\|_{L^r(\R^N)}\,{\rm d}s\\
 	&\le C_0^pK\int_0^t(t-s)^{-\sigma(r)}\,{\rm d}s\le \frac{C_0^pK}{1-\sigma(r)}t^{1-\sigma(r)},
 \end{aligned}\]
which implies~\eqref{eq:initial.data.Nu}. Hence,~\eqref{eq:initial.datum.Lq} holds; see Remark~\ref{rk:initial.data.Lq}.

Now, assume $u_0\ge0$ and let
\[\K_0=\{v\in L^\infty\big(0,t_0; L^{ q }(\R^N)\big),\vvvert v\vvvert\le C_0 \mbox{ and }v\ge0\}
\subset \K.\]
When restricted to $\K_0$, $\T$ is still a contraction if $t_0$ is small as above, and moreover $\T(\K_0)\subset \K_0$. Since $\T$ has a unique fixed point in $\K$, and a unique fixed point in $\K_0\subset \K$, they are in fact the same one, belonging to $\K_0$.
\end{proof}	

\begin{rem}
If $p=1$, we are in the good range for all $q\in[1,\infty]$, and we can choose the time step $t_0$ in Theorem~\ref{teo-local existence} independently of the initial datum $u_0$; see~\eqref{eq:in.the.space} and~\eqref{eq:contraction}.
\end{rem}

\begin{rem}\label{rem-gr} Observe that $p\ge p_{\rm c}$ if and only if $\ell\ge p$. Therefore, when $p\ge p_{\rm c}$, the condition $q>\ell$ is enough to guarantee that $q$ is in the good range~\eqref{eq:good.range}, and hence local existence,  whereas if $p\in[1,p_{\rm c})$ we just need to require that $q\ge p$.
\end{rem}
\subsection{Continuity} We next prove that, in the good range, $L^q$-solutions are continuous curves in $L^q$ in their time interval of existence.
\begin{prop}
\label{prop:continuity}
Let $p\in[1,\infty)$,  $q$ in the good range~\eqref{eq:good.range}, and $u_0\in L^q(\mathbb{R}^N)$. If $u$ is an $L^q$-solution to~\eqref{eq-formula} in $(0,T)$, then $u\in C([0,T);L^q(\R^N))$.
\end{prop}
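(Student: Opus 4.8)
\emph{Plan.} The plan is to exploit the decomposition $u=\mathcal{L}_{u_0}+\mathcal{N}_u$ from~\eqref{eq:definition.v.w.signed} and show that each of the two pieces is a continuous $L^q(\R^N)$-valued curve on $(0,T)$; continuity up to $t=0$ is already encoded in the notion of $L^q$-solution through~\eqref{eq:initial.datum.Lq}. Since $u$ satisfies~\eqref{eq-formula} for a.e.\ $t$, once $\mathcal{L}_{u_0}+\mathcal{N}_u$ is shown to be continuous it follows that $u$ admits a continuous representative, which is what the statement asserts. For the linear part I would write, for $t,t_0\in(0,T)$, $\mathcal{L}_{u_0}(\cdot,t)-\mathcal{L}_{u_0}(\cdot,t_0)=u_0*\big(Z(\cdot,t)-Z(\cdot,t_0)\big)$ and apply Young's convolution inequality together with the time-increment estimate~\eqref{eq-estimate Zt} with $r=1$, which reads $\|Z(\cdot,t)-Z(\cdot,t_0)\|_{L^1(\R^N)}\le K|t-t_0|\,(\min\{t,t_0\})^{-1}$ (the version for $t<t_0$ being obtained by interchanging the roles of $t$ and $t_0$). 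This gives $\|\mathcal{L}_{u_0}(\cdot,t)-\mathcal{L}_{u_0}(\cdot,t_0)\|_{L^q(\R^N)}\le K\|u_0\|_{L^q(\R^N)}|t-t_0|\,(\min\{t,t_0\})^{-1}\to0$ as $t\to t_0$, uniformly for all $q\in[1,\infty]$.

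For the nonlinear part I would fix $t_0\in(0,T)$, choose $\tau\in(t_0,T)$, and set $M:=\operatorname{ess\,sup}_{s\in[0,\tau]}\|u(\cdot,s)\|_{L^q(\R^N)}$, which is finite because $u\in L^\infty_{\rm loc}([0,T);L^q(\R^N))$; thus $\||u(\cdot,s)|^p\|_{L^{q/p}(\R^N)}=\|u(\cdot,s)\|_{L^q(\R^N)}^p\le M^p$ for a.e.\ $s\in[0,\tau]$. Taking $r$ as in~\eqref{eq:choice.r}, so that $r\in[1,p_{\rm c})$ and $\sigma(r)<1$, I treat right-continuity ($t_0<t<\tau$); the left case is analogous. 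Splitting the time integral I would write $\mathcal{N}_u(\cdot,t)-\mathcal{N}_u(\cdot,t_0)=I_\delta+II_\delta+III$ where, for $\delta\in(0,t_0)$,
\begin{align*}
I_\delta&=\int_0^{t_0-\delta}\!\int_{\R^N}|u|^{p-1}u(y,s)\big(Y(x-y,t-s)-Y(x-y,t_0-s)\big)\,{\rm d}y\,{\rm d}s,\\
II_\delta&=\int_{t_0-\delta}^{t_0}\!\int_{\R^N}|u|^{p-1}u(y,s)\big(Y(x-y,t-s)-Y(x-y,t_0-s)\big)\,{\rm d}y\,{\rm d}s,\\
III&=\int_{t_0}^{t}\!\int_{\R^N}|u|^{p-1}u(y,s)\,Y(x-y,t-s)\,{\rm d}y\,{\rm d}s,
\end{align*}
and estimate each piece by Hölder's inequality with exponents $q/p$ and $r$, followed by the kernel bounds of Section~\ref{sect:estimates}.

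The main obstacle is that the increment estimate~\eqref{eq-estimate Yt} carries the weight $(t_0-s)^{-\sigma(r)-1}$, which is \emph{not} integrable at $s=t_0$, so it cannot be used on all of $[0,t_0]$; this is exactly why I split off the strip $[t_0-\delta,t_0]$. On $[0,t_0-\delta]$, estimate~\eqref{eq-estimate Yt} gives $\|I_\delta\|_{L^q(\R^N)}\le KM^p(t-t_0)\int_0^{t_0-\delta}(t_0-s)^{-\sigma(r)-1}\,{\rm d}s\le KM^p(t-t_0)\,\sigma(r)^{-1}\delta^{-\sigma(r)}$, which for fixed $\delta$ vanishes as $t\to t_0^+$. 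On the strip, bounding the difference of kernels by their sum and using~\eqref{eq-estimate Y} yields $\|II_\delta\|_{L^q(\R^N)}\le KM^p\int_{t_0-\delta}^{t_0}\big((t-s)^{-\sigma(r)}+(t_0-s)^{-\sigma(r)}\big)\,{\rm d}s\le CM^p\delta^{1-\sigma(r)}$ for $t$ close to $t_0$, where $1-\sigma(r)>0$ is crucial. Finally $III$ is the near-diagonal term, controlled directly by~\eqref{eq-estimate Y}: $\|III\|_{L^q(\R^N)}\le KM^p(1-\sigma(r))^{-1}(t-t_0)^{1-\sigma(r)}\to0$.

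The conclusion is an $\varepsilon$--$\delta$ argument: given $\varepsilon>0$, I first fix $\delta$ small enough that $\|II_\delta\|_{L^q(\R^N)}<\varepsilon/3$, and then take $t$ close enough to $t_0$ that $\|I_\delta\|_{L^q(\R^N)}<\varepsilon/3$ and $\|III\|_{L^q(\R^N)}<\varepsilon/3$, proving right-continuity. Left-continuity ($t<t_0$) follows from the same splitting about the diagonal, now using~\eqref{eq-estimate Yt} with the roles of $t$ and $t_0$ interchanged (the weight becoming $(t-s)^{-\sigma(r)-1}$) and the extra strip $[t,t_0]$ handled by~\eqref{eq-estimate Y}. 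Combining the continuity of $\mathcal{N}_u$ with that of $\mathcal{L}_{u_0}$ on $(0,T)$, and with~\eqref{eq:initial.datum.Lq} at $t=0$, gives $u\in C([0,T);L^q(\R^N))$.
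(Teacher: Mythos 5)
Your proposal is correct and follows essentially the same route as the paper: the decomposition $u=\mathcal{L}_{u_0}+\mathcal{N}_u$, the estimate \eqref{eq-estimate Zt} with $r=1$ for the linear part, and a three-piece splitting of the time integral for $\mathcal{N}_u$ using \eqref{eq-estimate Yt} away from the diagonal and \eqref{eq-estimate Y} on the strips near it. The only cosmetic difference is that the paper takes the width of the middle strip proportional to $t_2-t_1$ (so all three bounds vanish directly as $t_2\to t_1$), whereas you fix $\delta$ first and run an $\varepsilon$--$\delta$ argument; both are valid.
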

\begin{proof}
Let us first check that $u(\cdot,t)$ given by~\eqref{eq-formula} is well defined and belongs to $L^q(\mathbb{R}^N)$ \emph{for all} $t\in (0,T)$. Indeed, since  $\|Z(\cdot,t)\|_{L^1(\R^N)}=1$ for all $t>0$, taking $r$ as in~\eqref{eq:choice.r} and using the bound~\eqref{eq-estimate Y},
\begin{align*}
\|u(\cdot,t)\|_{L^{ q }(\R^N)}&\le \|u_0\|_{L^{ q }(\R^N)}+\int_0^t\||u(\cdot,s)|^p\|_{L^{\frac  q  p}(\R^N)}\|Y(\cdot,t-s)\|_{L^r(\R^N)}\,{\rm d}s\\
&\le  K\sup_{s\in(0,t)}\|u(\cdot,s)\|^p_{L^q(\R^N)}\int_{0}^{t}(t-s)^{-\sigma(r)}\,{\rm d}y{\rm d}s=K\sup\limits_{s\in(0,t)}\|u(\cdot,s)\|^p_{L^q(\R^N)}\frac{t^{1-\sigma(r)}}{1-\sigma(r)}.
\end{align*}

Continuity at $t=0$ comes for free by the very definition of $L^q$-solution.  To consider the continuity outside the origin we make the decomposition~\eqref{eq:definition.v.w.signed}, and prove separately the continuity of $\mathcal{L}_{u_0}$ and~$\mathcal{N}_u$.

Let $0<t_1<t_2<T$. We have, using~\eqref{eq-estimate Zt} with $r=1$,
$$
\|\mathcal{L}_{u_0}(\cdot,t_2)-\mathcal{L}_{u_0}(\cdot,t_1)\|_{L^q(\mathbb{R}^N)}\le \|u_0\|_{L^q(\mathbb{R}^N)}\|Z(\cdot,t_2)-Z(\cdot,t_1)\|_{L^1(\mathbb{R}^N)}
\le  \|u_0\|_{L^q(\mathbb{R}^N)}K(t_2-t_1)t_1^{-1},
$$
from where the desired continuity for $\mathcal{L}_{u_0}$ follows immediately. As for $\mathcal{N}_u$,  we make the decomposition
$$
\begin{aligned}
\mathcal{N}_u(\cdot,t_2)-\mathcal{N}_u(\cdot,t_1)&={\rm I}(\cdot,t_2,t_1)+{\rm II}(\cdot,t_2,t_1)+{\rm III}(\cdot,t_2,t_1),\quad\text{where}\\
{\rm I}(x,t_2,t_1)&=\int_{0}^{t_1-\varepsilon(t_2-t_1)}\int_{\R^N} |u(y,s)|^{p-1}u(y,s)\big(Y(x-y,t_2-s)-Y(x-y,t_1-s)\big)\,{\rm d}y{\rm d}s,\\
{\rm II}(x,t_2,t_1)&=\int_{t_1-\varepsilon(t_2-t_1)}^{t_1}\int_{\R^N} |u(y,s)|^{p-1}u(y,s)\big(Y(x-y,t_2-s)-Y(x-y,t_1-s)\big)\,{\rm d}y{\rm d}s,\\
{\rm III}(x,t_2,t_1)&=\int_{t_1}^{t_2}\int_{\R^N} |u(y,s)|^{p-1}u(y,s)Y(x-y,t_2-s)\,{\rm d}y{\rm d}s,
\end{aligned}
$$
with $\varepsilon>0$ small enough so that $t_1-\varepsilon(t_2-t_1)>0$. Notice that when $t_2\to t_1$  and $t_1>0$ this means no restriction for $\varepsilon$.

To estimate ${\rm I}$ we take $r$ as in~\eqref{eq:choice.r} and use the bound~\eqref{eq-estimate Yt},
$$
\begin{aligned}
\|{\rm I}(\cdot,t_2,t_1)\|_{L^q(\mathbb{R}^N)}&\le\int_{0}^{t_1-\varepsilon(t_2-t_1)}\||u(\cdot,s)|^p\|_{L^{\frac  {q}  p}(\R^N)}\|Y(\cdot,t_2-s)-Y(\cdot,t_1-s)\|_{L^r(\mathbb{R}^N)}\,{\rm d}y{\rm d}s\\
&\le  K\sup_{t\in(0,t_2)}\|u(\cdot,t)\|^p_{L^q(\R^N)}(t_2-t_1)\int_{0}^{t_1-\varepsilon(t_2-t_1)}(t_1-s)^{-\sigma(r)-1}\,{\rm d}y{\rm d}s\\
&=K\sup\limits_{t\in(0,t_2)}\|u(\cdot,t)\|^p_{L^q(\R^N)}\frac{\varepsilon^{-\sigma(r)}(t_2-t_1)^{1-\sigma(r)}-t_1^{-\sigma(r)}(t_2-t_1)}{\sigma(r)}.
\end{aligned}
$$

As for ${\rm II}$, using the bound~\eqref{eq-estimate Y},
$$
\begin{aligned}
\|{\rm II}(\cdot,t_2,t_1)\|_{L^q(\mathbb{R}^N)}&\le\int_{t_1-\varepsilon(t_2-t_1)}^{t_1}\!\||u(\cdot,s)|^p\|_{L^{\frac q p}(\R^N)}\!\big(\|Y(\cdot,t_2-s)\|_{L^r(\mathbb{R}^N)}+\|Y(\cdot,t_1-s)\|_{L^r(\mathbb{R}^N)}\big)\,{\rm d}y{\rm d}s\\
&\le K\sup_{t\in(0,t_2)}\|u(\cdot,t)\|^p_{L^q(\R^N)}\int_{t_1-\varepsilon(t_2-t_1)}^{t_1}\big((t_2-s)^{-\sigma(r)}+(t_1-s)^{-\sigma(r)}\big)\,{\rm d}y{\rm d}s\\
&=K\sup_{t\in(0,t_2)}\|u(\cdot,t)\|^p_{L^q(\R^N)}(t_2-t_1)^{1-\sigma(r)}\frac{\big((1+\varepsilon)^{1-\sigma(r)}-1+\varepsilon^{1-\sigma(r)}\big)}{1-\sigma(r)}.
\end{aligned}
$$

In order to estimate ${\rm III}$ we use again the bound~\eqref{eq-estimate Y},

\begin{align*}
\|{\rm III}(\cdot,t_2,t_1)\|_{L^q(\mathbb{R}^N)}&\le\int_{t_1}^{t_2}\int_{\R^N}\||u(\cdot,s)|^p\|_{L^{\frac  q p}(\R^N)}\|Y(\cdot,t_2-s)_{L^r(\mathbb{R}^N)}\,{\rm d}y{\rm d}s\\
&\le K\sup_{t\in(0,t_2)}\|u(\cdot,t)\|^p_{L^q(\R^N)}\int_{t_1}^{t_2}(t_2-s)^{-\sigma(r)}\,{\rm d}y{\rm d}s\\
&=K\sup_{t\in(0,t_2)}\|u(\cdot,t)\|^p_{L^q(\R^N)}\frac{(t_2-t_1)^{1-\sigma(r)}}{1-\sigma(r)}.\qedhere
\end{align*}
\end{proof}
\begin{rem}
If $\sup\limits_{t\in(0,T)}\|u(\cdot, t)\|_{L^q(\R^N)}<\infty$, we may repeat the arguments in the proof of Proposition~\ref{prop:continuity} to check first that
$$
u(\cdot,T):=\int_{\R^N} u_0(y)Z(x-y,T)\,{\rm d}y+\int_0^T\int_{\R^N}|u(y,s)|^{p-1}u(y,s)Y(x-y,T-s)\,{\rm d}y{\rm d}s
$$
is well defined and belongs to $L^q(\mathbb{R}^N)$, and then that
$\lim\limits_{t\nearrow T} u(\cdot, t)=u(\cdot,T)$ in $L^q(\mathbb{R}^N)$.
\end{rem}

\subsection{Extension}\label{sect-extension}

We now show that if the $L^q$-norm of an $L^q$-solution is bounded up to its existence time $T$, then we can construct an $L^q$-solution with a larger existence time.

\begin{teo}\label{teo-extension}
Let $p\in[1,\infty)$ and $q$ in the good range~\eqref{eq:good.range}. Let $u$ be an $L^q$-solution to~\eqref{eq-problem} with initial datum $u_0\in L^q(\R^N)$ and existence time $T$. If $\sup_{t\in(0,T)}\|u(\cdot, t)\|_{L^q(\R^N)}<\infty$, then there exists some $\tau>0$ such that~\eqref{eq-problem} has an $L^q$-solution $\bar u$ with initial datum $u_0\in L^q(\R^N)$ and existence time $\bar T=T+\tau$. Moreover, $\sup_{t\in(0,\bar T)}\|\bar u(\cdot, t)\|_{L^q(\R^N)}<\infty$. If $u\ge0$, then $\bar u\ge0$.
\end{teo}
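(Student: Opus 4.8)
The plan is to reconstruct the solution past time $T$ by using the profile $u(\cdot,T)$ as a new initial datum and running the fixed-point argument of Theorem~\ref{teo-local existence} from there, then gluing the two pieces together into a single $L^q$-solution on the larger interval $(0,\bar T)$.

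First I would invoke the Remark following Proposition~\ref{prop:continuity}: since $M:=\sup_{t\in(0,T)}\|u(\cdot,t)\|_{L^q(\R^N)}<\infty$, the profile
$$
u(\cdot,T)=\mathcal{L}_{u_0}(\cdot,T)+\mathcal{N}_u(\cdot,T)
$$
is well defined, belongs to $L^q(\R^N)$, and $u(\cdot,t)\to u(\cdot,T)$ in $L^q(\R^N)$ as $t\nearrow T$; in particular $\|u(\cdot,T)\|_{L^q(\R^N)}\le KM^p\,T^{1-\sigma(r)}/(1-\sigma(r))+\|u_0\|_{L^q}$. Taking $w_0:=u(\cdot,T)\in L^q(\R^N)$ as initial datum, Theorem~\ref{teo-local existence} produces a time step $\tau>0$ and an $L^q$-solution $w\in L^\infty(0,\tau;L^q(\R^N))$ of the problem with datum $w_0$, i.e.
$$
w(x,t)=\int_{\R^N} w_0(y)Z(x-y,t)\,{\rm d}y+\int_0^t\int_{\R^N}|w(y,s)|^{p-1}w(y,s)Y(x-y,t-s)\,{\rm d}y{\rm d}s.
$$
If $u\ge0$, then $w_0\ge0$ and the nonnegativity assertion of Theorem~\ref{teo-local existence} gives $w\ge0$.

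Next I would define the candidate extension by concatenation:
$$
\bar u(\cdot,t):=
\begin{cases}
u(\cdot,t), & t\in(0,T),\\
w(\cdot,t-T), & t\in[T,\bar T),
\end{cases}
\qquad \bar T:=T+\tau.
$$
Clearly $\bar u\in L^\infty_{\rm loc}([0,\bar T);L^q(\R^N))$ with $\sup_{t\in(0,\bar T)}\|\bar u(\cdot,t)\|_{L^q(\R^N)}<\infty$, and it inherits nonnegativity and the initial condition~\eqref{eq:initial.datum.Lq} from $u$. The real content is to verify that $\bar u$ satisfies Duhamel's formula~\eqref{eq-formula} on the whole interval $(0,\bar T)$. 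For $t<T$ this is immediate. For $t\in(T,\bar T)$ the point is to show that the two-piece integral over $(0,t)$ reassembles correctly. The key tool here is the semigroup-type (Chapman--Kolmogorov) identity for the kernels,
$$
Z(x,t)=\int_{\R^N} Z(x-z,t-T)\,Z(z,T)\,{\rm d}z,
$$
together with the companion identity expressing $Y(\cdot,t)$, valid for the space-time fractional kernels, which lets one rewrite $\mathcal{L}_{w_0}(\cdot,t-T)=\int Z(\cdot-z,t-T)u(z,T)\,{\rm d}z$ in terms of the original datum $u_0$ and the reaction history of $u$ on $(0,T)$. Splitting the Duhamel integral for $\bar u$ at $s=T$ and substituting $w_0=u(\cdot,T)$, one must check that the convolution of the $(0,T)$-contribution through $Z(\cdot,t-T)$ recombines with the $(T,t)$-contribution to reproduce exactly $\mathcal{L}_{u_0}(\cdot,t)+\mathcal{N}_{\bar u}(\cdot,t)$.

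The main obstacle I anticipate is precisely this last algebraic consistency step, because the memory (the Caputo time-nonlocality) means the kernels $Z,Y$ do \emph{not} satisfy a naive semigroup law of the local heat-kernel type; restarting at time $T$ is genuinely delicate. Rather than fight the subtle composition identities for $Y$ head-on, the cleaner route — and the one I would actually pursue — is to avoid reproving Duhamel from scratch and instead appeal to the uniqueness already available in the good range. Concretely: the solution $u$, restricted to $(0,T)$, is \emph{the} unique fixed point of $\T$ in the relevant ball, so the natural statement to prove is that $\bar u$ defined above is, on any $(0,t_0')$ with $t_0'$ slightly larger than $T$, the unique fixed point of the map $\T$ built with datum $u_0$ over that longer interval. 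One shows $\bar u\in\K$ (with $C_0$ replaced by a constant governed by the finite bound $M$ and the contribution from the extra interval, finite by~\eqref{eq-estimate Y} since $\sigma(r)<1$), and that the tail integral over $(T,t)$ is exactly the local correction the fixed-point construction adds. Establishing that the restart datum $w_0=u(\cdot,T)$ fed into the local construction yields a function solving the \emph{global} Duhamel equation — not merely the shifted one — is the crux, and it is where the kernel composition identities must be used carefully; everything else (norm bounds, nonnegativity, continuity at the splicing time via the Remark after Proposition~\ref{prop:continuity}) is routine.
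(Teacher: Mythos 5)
There is a genuine gap. Your primary construction restarts the problem at time $T$ with datum $w_0=u(\cdot,T)$ and concatenates, and you yourself locate the crux in a ``semigroup-type (Chapman--Kolmogorov) identity'' for $Z$ together with a ``companion identity'' for $Y$. No such usable identities exist here: because of the Caputo memory, the solution operator of the linear problem is \emph{not} a semigroup, and a function $w$ solving the Duhamel equation shifted to start at $T$ with datum $u(\cdot,T)$ does not, after concatenation, solve the original Duhamel equation \eqref{eq-formula} on $(0,T+\tau)$. So the step you flag as ``the crux'' is not a delicate verification left to the reader --- it is precisely the point where the restart strategy breaks down. Your fallback (``show $\bar u$ is the unique fixed point of $\T$ built with datum $u_0$ over $(0,t_0')$'') does not repair this: first, $\T$ is a contraction only on short time intervals (the contraction constant grows like $t_0^{1-\sigma(r)}$), so for general $T$ there is no fixed-point framework over all of $(0,t_0')$; second, the candidate $\bar u$ is still the concatenation with the restarted $w$, so verifying the fixed-point identity for $t>T$ requires exactly the missing composition formula.

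The paper's proof never restarts. It fixes a small $\tau\in(0,T/2)$ and runs a fixed-point argument only on the window $(T-\tau,T+\tau)$, for the operator
$$
\bar\T v(x,t)=\mathcal{L}_{u_0}(x,t)+\int_0^{T-\tau}\!\!\int_{\R^N}|u|^{p-1}u\,(y,s)\,Y(x-y,t-s)\,{\rm d}y\,{\rm d}s+\int_{T-\tau}^t\!\!\int_{\R^N}|v|^{p-1}v\,(y,s)\,Y(x-y,t-s)\,{\rm d}y\,{\rm d}s,
$$
in which the full memory of the known solution $u$ on $(0,T-\tau)$ is kept frozen inside the operator and only the unknown on $(T-\tau,T+\tau)$ is iterated. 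A fixed point of $\bar\T$, glued to $u$ on $(0,T-\tau)$, satisfies \eqref{eq-formula} with the original datum $u_0$ by construction, with no kernel composition needed; the smallness of $\tau$ makes $\bar\T$ an invariant contraction on the ball of radius $C_1=2\sup_{t<T}\|u(\cdot,t)\|_{L^q}$ via \eqref{eq-estimate Zt}, \eqref{eq-estimate Y} and \eqref{eq-estimate Yt}, and backing up to $T-\tau$ (rather than starting the window at $T$) is what lets one control the term $\int_{T-\tau}^{T}$ coming from the old solution. To salvage your write-up you would need to abandon the restarted datum $w_0$ altogether and set up the fixed point for the global Duhamel operator restricted to the extension window, which is the paper's argument.
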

\begin{proof}

The idea is to construct a solution $\bar u$ that coincides with $u$ for $t\in(0,T-\tau)$ for some $\tau\in (0,T/2)$ small to be further specified later. Thus, for $t\in(T-\tau,T+\tau)$ we should have
\begin{align*}
\bar u(x,t)&=\int_{\R^N} u_0(y)Z(x-y,t)\,{\rm d}y+\int_0^{T-\tau}\int_{\R^N}|u(y,s)|^{p-1}u(y,s)Y(x-y,t-s)\,{\rm d}y{\rm d}s
\\
&\quad+\int_{T-\tau}^t\int_{\R^N}|\bar u(y,s)|^{p-1}\bar u(y,s)Y(x-y,t-s)\,{\rm d}y{\rm d}s.
\end{align*}
With this in mind, we consider the set
\[
\bar\K:=\{v\in L^\infty\big(T-\tau,T+\tau;L^q(\R^N)\big),\, \vvvert v\vvvert :=\sup_{t\in(T-\tau,T+\tau)}\|v(\cdot,t)\|_{L^{ q }(\R^N)}\le C_1\},
\]
where $C_1:=2\sup\limits_{t\in(0,T)}\|u(\cdot, t)\|_{L^q(\R^N)}$, and define in it an operator $\bar\T$ by
\begin{equation}
\label{eq:definition.barT}
\begin{aligned}
\bar\T v(x,t)&=\int_{\R^N} u_0(y)Z(x-y,t)\,{\rm d}y+\int_0^{T-\tau}\int_{\R^N}|u(y,s)|^{p-1}u(y,s)Y(x-y,t-s)\,{\rm d}y{\rm d}s
\\
&\quad+\int_{T-\tau}^t\int_{\R^N}|\bar u(y,s)|^{p-1}\bar u(y,s)Y(x-y,t-s)\,{\rm d}y{\rm d}s.
\end{aligned}
\end{equation}
We will prove that $\bar\T$ has a (unique) fixed point $\tilde u$ in $\bar\K$ if $\tau>0$ is small enough. Then,
\[\bar u(x,t)=\begin{cases}
	u(x,t),&t\in(0,T-\tau),\\
	\tilde u(x,t),&t\in(T-\tau,T+\tau),
	\end{cases}\]
will be an $L^q$-solution to \eqref{eq-problem} with existence time $\bar T=T+\tau$. 	

To prove that $\bar\T(\bar\K)\subset \bar\K$ if  $\tau$ is small enough, we rewrite $\bar\T v$ as $\bar\T v ={\rm I}+{\rm II}+{\rm III}+{\rm IV}$+{\rm V}, where
\begin{align*}
{\rm I}(x)&=\int_{\R^N} u_0(y)Z(x-y,T)\,{\rm d}y+\int_0^T\int_{\R^N}|u(y,s)|^{p-1}u(y,s)Y(x-y,T-s)\,{\rm d}y{\rm d}s,\\
{\rm II}(x,t)&=\int_{\R^N} u_0(y)\big(Z(x-y,t)-Z(x-y,T)\big)\,{\rm d}y,\\
{\rm III}(x,t)&=\int_0^{T-\tau}\int_{\R^N} |u(y,s)|^{p-1}u(y,s)\big(Y(x-y,t-s)-Y(x-y,T-s)\big)\,{\rm d}y{\rm d}s,\\
{\rm IV}(x,t)&=\int_{T-\tau}^{T}\int_{\R^N}|u(y,s)|^{p-1}u(y,s)Y(x-y,T-s)\,{\rm d}y{\rm d}s,\\
{\rm V}(x,t)&=\int_{T-\tau}^t\int_{\R^N} |v(y,s)|^{p-1}v(y,s)Y(x-y,t-s)\,{\rm d}y{\rm d}s.
\end{align*}
On the one hand $\|{\rm I}\|_{L^{ q }(\R^N)}=\|u(\cdot, T)\|_{L^q(\R^N)}\le C_1/2$.
On the other hand, by Young's inequality  and~\eqref{eq-estimate Zt} with $r=1$, if $\tau\in(0,T/(8K))$ we have
\begin{align*}
\|{\rm II}(\cdot,t)\|_{L^{ q }(\R^N)}&\le \|u_0\|_{L^{ q }(\R^N)}\|Z(\cdot,t)-Z(\cdot, T)\|_{L^1(\R^N)}
\le \frac{KC_1}2
\begin{cases}
(t-T)T^{-1},&t\in(T,T+\tau),
\\
(T-t)t^{-1},&t\in(T-\tau,T),
\end{cases}
\\
&\le KC_1 T^{-1} \tau\le \frac{C_1}8 \quad\text{for all }t\in (T-\tau,T+\tau).
\end{align*}

In order to estimate ${\rm III}$, ${\rm IV}$ and ${\rm V}$, we take $r$ as in~\eqref{eq:choice.r}, so that, since we are in the good range, $r\in[1,p_{\rm c})$ and $\sigma(r)\in(0,1)$. Young's inequality plus~\eqref{eq-estimate Yt}, in the estimate for ${\rm III}$, or~\eqref{eq-estimate Y}, in the estimates for ${\rm IV}$ and ${\rm V}$, yield
\begin{align*}
\|{\rm III}(\cdot,t)\|_{L^{ q }(\R^N)}&\le C_1^pK\tau\int_0^{T-\tau}(T-s)^{-\sigma(r)-1}\,{\rm d}s\le C_1^pK\frac{\tau^{1-\sigma(r)}}{\sigma(r)}\le \frac{C_1}8,
\\	
\|{\rm IV}(\cdot,t)\|_{L^{ q }(\R^N)}&\le C_1^pK\int_{T-\tau}^{T}(T-s)^{-\sigma(r)}\,{\rm d}s= C_1^pK\frac{\tau^{1-\sigma(r)}}{1-\sigma(r)}\le \frac{C_1}8,\\
\|{\rm V}(\cdot,t)\|_{L^{ q }(\R^N)}&\le C_1^pK\int_{T-\tau}^t (t-s)^{-\sigma(r)}\,{\rm d}s= C_1^pK\frac{(t-T+ \tau)^{1-\sigma(r)}}{1-\sigma(r)}\le C_1^pK\frac{(2\tau)^{1-\sigma(r)}}{1-\sigma(r)}\le\frac{C_1}8
\end{align*}
if $\tau$ is small enough, namely, if
$$
\tau\le\min\Big\{\Big(\frac{C_1^{1-p}\sigma(r)}{8K}\Big)^{\frac1{1-\sigma(r)}},\frac12\Big(\frac{C_1^{1-p}(1-\sigma(r))}{8K}\Big)^{\frac1{1-\sigma(r)}}
\Big\}.
$$

Now that we know that $\bar \T(\bar \K)\subset\bar \K$, we will prove that $\bar\T$ is a contraction in $\bar\K$ for the distance associated to the norm $\vvvert v\vvvert$ if $\tau$ is small. Let $v_1,\,v_2\in\bar\K$. For $t\in(T-\tau,T+\tau)$,
\[\begin{aligned}
	\|\bar\T v_1(\cdot,t)&-\bar\T v_2(\cdot t)\|_{L^{ q }(\R^N)}\\
	&\le \int_{T- \tau}^t\||v_1(\cdot,s)|^{p-1}v_1(\cdot,s)-|v_2(\cdot,s)|^{p-1}v_2(\cdot,s)\|_{L^{\frac q  p}(\R^N)}\|Y(\cdot,t-s)\|_{L^r(\R^N)}\,{\rm d}s.
\end{aligned}\]
Hence, by \eqref{eq-estimate difference powers} with $C_0$ replaced by $C_1$,
if $\tau\le\frac12\Big(\frac{(2C_1)^{1-p}(1-\sigma(r))}{2pK}\Big)^{\frac1{1-\sigma(r)}}$, then	
\[\begin{aligned}
	\|\bar\T v_1(\cdot,t)-\bar\T v_2(\cdot, t)\|_{L^{ q }(\R^N)}&\le p(2C_1)^{p-1}\vvvert v_1-v_2\vvvert K\int_{T-\tau}^t(t-s)^{-\sigma(r)}\,ds\\
	&= p(2C_1)^{p-1}K\frac{(t-T+\tau)^{1-\sigma(r)}}{1-\sigma(r)}\vvvert v_1-v_2\vvvert \\
			&\le  p(2C_1)^{p-1}K\frac{(2\tau)^{1-\sigma(r)}}{1-\sigma(r)}\vvvert v_1-v_2\vvvert <\frac12 \vvvert v_1-v_2\vvvert.
		\end{aligned}\]

Let now $u_0\ge0$ and
\[\bar\K_0:=\{v\in L^\infty(T-\tau,T+\tau;L^{ q }(\R^N))\,,\vvvert v\vvvert \le C_1\mbox{ and }v\ge0\}.\]
Let us see that if $u\ge0$ and $\tau$ is chosen as above, then $\bar\T:\bar\K_0\to\bar\K_0$ and it is a contraction. To this aim we just have to see that $v\ge0$ implies that $\T v\ge0$. This follows immediately from the definition of $\bar\T$; see~\eqref{eq:definition.barT}.\end{proof}

\begin{rem}
If $p=1$ the increment $\tau$ of the existence time in Theorem~\ref{teo-extension} can be chosen independently of $\sup\limits_{t\in(0,T)}\|u(\cdot, t)\|_{L^{ q }(\R^N)}$.
\end{rem}

\begin{rem}
By construction,
$\bar u(\cdot,t)=	u(\cdot,t)$ for $t\in(0,T-\tau)$. In the next paragraph we will prove that in fact the equality holds up to time $T$.
\end{rem}

\subsection{Uniqueness}

In the good range any two $L^q$-solutions with the same initial datum coincide in their common existence time. In particular, there is at most one $L^q$-solution with a given existence time.
\begin{teo}\label{teo-local uniqueness}
Let $p\in[1,\infty)$ and $q$ in the good range~\eqref{eq:good.range}. Let $u_1$ and $u_2$ be two $L^q$-solutions to~\eqref{eq-problem} with initial datum $u_0\in L^q(\R^N)$ and corresponding existence times $T_1$ and $T_2$. Then $u_1(\cdot,t)=u_2(\cdot,t)$ for all $t\in(0,\min\{T_1,T_2\})$.
 \end{teo}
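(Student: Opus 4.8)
The plan is to show that the difference of two $L^q$-solutions vanishes on their common existence interval by a Gronwall-type argument local in time, bootstrapped to cover the whole interval. Set $I=(0,\min\{T_1,T_2\})$ and $w:=u_1-u_2$. Since both solutions satisfy~\eqref{eq-formula} with the same initial datum, the linear parts $\mathcal{L}_{u_0}$ cancel, and for a.e.\ $t\in I$,
\[
w(\cdot,t)=\int_0^t\int_{\R^N}\big(|u_1(y,s)|^{p-1}u_1(y,s)-|u_2(y,s)|^{p-1}u_2(y,s)\big)Y(x-y,t-s)\,{\rm d}y{\rm d}s.
\]
Fix any $T_*<\min\{T_1,T_2\}$ and set $M:=\max_i\sup_{s\in(0,T_*)}\|u_i(\cdot,s)\|_{L^q(\R^N)}$, which is finite by Proposition~\ref{prop:continuity}. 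Choosing $r$ as in~\eqref{eq:choice.r}, so that $r\in[1,p_{\rm c})$ and $\sigma(r)\in(0,1)$, I would apply Young's inequality together with the elementary estimate~\eqref{eq-estimate difference powers} (with $2C_0$ replaced by $2M$) and the bound~\eqref{eq-estimate Y} to obtain, for $0<t\le T_*$,
\[
\|w(\cdot,t)\|_{L^q(\R^N)}\le p(2M)^{p-1}K\int_0^t(t-s)^{-\sigma(r)}\|w(\cdot,s)\|_{L^q(\R^N)}\,{\rm d}s.
\]

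The core of the argument is then to deduce $w\equiv0$ on $(0,T_*]$ from this weakly singular integral inequality. Because the kernel $(t-s)^{-\sigma(r)}$ is not bounded, the ordinary Gronwall lemma does not apply directly; the natural tool is the singular (Gronwall–Henry) inequality for kernels of the form $(t-s)^{-\sigma}$ with $\sigma<1$, which still forces $w\equiv0$ when the inhomogeneous term is zero. Alternatively, and perhaps more in keeping with the self-contained style of the preceding proofs, I would argue on a short initial interval: since $\sigma(r)<1$, choosing $t_*>0$ with $p(2M)^{p-1}K\,t_*^{1-\sigma(r)}/(1-\sigma(r))<1$ gives
\[
\sup_{t\in(0,t_*)}\|w(\cdot,t)\|_{L^q(\R^N)}\le p(2M)^{p-1}K\frac{t_*^{1-\sigma(r)}}{1-\sigma(r)}\sup_{t\in(0,t_*)}\|w(\cdot,t)\|_{L^q(\R^N)},
\]
which is only possible if $\sup_{t\in(0,t_*)}\|w(\cdot,t)\|_{L^q(\R^N)}=0$, i.e.\ $u_1=u_2$ on $(0,t_*)$. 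Note that $t_*$ depends only on $M$, $p$, and $\sigma(r)$, not on the base point.

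Having uniqueness on $(0,t_*)$, I would then slide the window forward by repeating the estimate on $(t_*,2t_*)$, $(2t_*,3t_*)$, and so on, each time using that $w$ vanishes on the preceding interval so that the integral splits and the contribution from $[0,kt_*]$ disappears. The same uniform step size $t_*$ works at every stage because $M$ bounds both solutions on all of $(0,T_*)$; hence after finitely many steps one reaches $w\equiv0$ on $(0,T_*)$. Since $T_*<\min\{T_1,T_2\}$ was arbitrary, $u_1=u_2$ on all of $(0,\min\{T_1,T_2\})$. The main obstacle is the weak singularity of $Y$ in the time convolution: one cannot simply invoke classical Gronwall, and must either quote the singular version or, as sketched, absorb the singularity into a sufficiently short step and then iterate, being careful that the step length can be taken uniform across the interval.
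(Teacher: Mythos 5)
Your proposal is correct and rests on exactly the same quantitative heart as the paper's proof: the Lipschitz estimate~\eqref{eq-estimate difference powers} combined with~\eqref{eq-estimate Y} for $r$ as in~\eqref{eq:choice.r}, yielding a contraction factor $p(2M)^{p-1}K\,t_*^{1-\sigma(r)}/(1-\sigma(r))<1$ with a step size depending only on the uniform $L^q$ bound $M$, which is then iterated to cover the whole common existence interval. The paper packages this as uniqueness of fixed points of the operators $\T$ and $\bar\T$ from Theorems~\ref{teo-local existence} and~\ref{teo-extension} together with a continuation-by-contradiction argument on the supremum of coincidence times, while you work directly with the integral inequality for $u_1-u_2$ and a finite uniform-step iteration (correctly noting that the vanishing of $w$ on the preceding window kills the memory term); this is a difference of presentation, not of substance.
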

\begin{proof}
Let $\bar t\in(0,\min\{T_1,T_2\})$. We define $\K$ and $\T$ as in the proof of Theorem~\ref{teo-local existence}, but now with
$$
C_0=\max\{2\|u_0\|_{L^q(\R^N)},\sup\limits_{t\in(0,\bar t)}\|u_1(\cdot,t)\|_{L^q(\R^N)},\sup\limits_{t\in(0,\bar t)}\|u_2(\cdot,t)\|_{L^q(\R^N)}\}
$$
Repeating that proof,  we see that if $t_0>0$ is small enough then $\T:\K\to\K$ is a contraction. Therefore, $\T$ has a unique fixed point in $\K$. As $u_1$ and $u_2$ belong to $\K$ if $t_0\le \bar t$ and are fixed points of $\T$, then $u_1(\cdot,t)=u_2(\cdot,t)$ if $0<t\le t_0$.

Let $\widehat T:=\sup\{ \hat t\in(0,\min\{T_1,T_2\}): u_1(\cdot,t)=u_2(\cdot,t)\text{ for }0<t\le \hat t\}$. Assume for contradiction that $\widehat T<\min\{T_1,T_2\}$. Let us call $u(x,t)=u_1(x,t)=u_2(x,t)$ for $0<t<\widehat T$ and
$$
C_1=\max\{2\sup\limits_{t\in(0,\widehat T)}\|u(\cdot, t)\|_{L^q(\R^N))},\sup\limits_{t\in(0,\widehat T)}\|u_1(\cdot,t)\|_{L^q(\R^N)},\sup\limits_{t\in(0,\widehat T)}\|u_2(\cdot,t)\|_{L^q(\R^N)}\}.
$$
Let $\bar\K$ and $\bar\T$ as in Theorem \ref{teo-extension} with $ T$ replaced by $\widehat T$. As in that theorem, we see that $\bar\T:\bar\K\to\bar\K$ is a contraction  if $\tau$ is small enough, so that $\bar\T$ has a unique fixed point in $\bar\K$.
Besides, if in addition~$\tau$ is such that $\widehat T+\tau\le \min\{T_1,T_2\}$, then, $u_1$ and $u_2$ belong to $\bar\K$ and they are both fixed points of~$\bar \T$. Hence, $u_1(\cdot,t)=u_2(\cdot,t)$ for $0<t\le\widehat T+\tau$. But this contradicts the definition of $\widehat T$. Hence, $\widehat T=\min\{T_1,T_2\}$.	
\end{proof}

\subsection{Maximal solutions}
	
If $q$ is in the good range~\eqref{eq:good.range}, we define the \emph{maximal existence time} of $L^q$-solutions to~\eqref{eq-problem} with initial datum $u_0\in L^q(\mathbb{R}^N)$ by
$$
T_m(u_0):=\sup\{T>0: \exists \mbox{ an $L^q$-solution to~\eqref{eq-problem} with initial datum $u_0$ and existence time $T$}\}.
$$
In this range, an $L^q$-solution with existence time $T$ belongs to $C([0,T);L^q(\mathbb{R}^N))$, so that it makes sense to consider its value (in $L^q(\mathbb{R}^N)$) at a time $t\in[0,T)$. Given $t\in[0,T_m)$ we define $u_m(\cdot,t)=u(\cdot,t)$, where $u$ is any $L^q$-solution to~\eqref{eq-problem} with initial datum $u_0$ and existence time $T>t$.  Thanks to Theorem~\ref{teo-local uniqueness}, $u_m$ is well defined, and it is obviously an $L^q$-solution to~\eqref{eq-problem} with existence time $T_m$. Hence, by Proposition~\ref{prop:continuity} $u_m\in C([0,T_m);L^q(\mathbb{R}^N)$. This function $u_m$ is known as the \emph{maximal} $L^q$-solution to~\eqref{eq-problem} with initial datum $u_0$.

As corollaries to~Theorem~\ref{teo-extension},  we have the following results for maximal $L^q$-solutions in the good range.
\begin{coro}
\label{coro-global existence}
Let $p=1$, $q\in[1,\infty]$ and $u_0\in L^q(\R^N)$. The unique maximal $L^q$-solution to problem~\eqref{eq-problem} with initial datum $u_0$ is global in time. 	
\end{coro}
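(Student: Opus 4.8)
The plan is to argue by contradiction. Since $p=1$ gives $\ell=\frac{N}{2\beta}(p-1)=0$, every $q\in[1,\infty]$ lies in the good range~\eqref{eq:good.range}, so by the results of this section there is a unique maximal $L^q$-solution $u_m$ to~\eqref{eq-problem} with initial datum $u_0$, defined on $[0,T_m)$ with $T_m=T_m(u_0)$, and $u_m\in C([0,T_m);L^q(\R^N))$. Suppose, for contradiction, that $T_m<\infty$. The strategy is to show that $\sup_{t\in(0,T_m)}\|u_m(\cdot,t)\|_{L^q(\R^N)}<\infty$, for then Theorem~\ref{teo-extension} produces an $L^q$-solution with existence time $T_m+\tau>T_m$ for some $\tau>0$, contradicting the fact that $T_m$ is the supremum of all existence times.

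To obtain the uniform bound I would exploit that for $p=1$ the nonlinearity is linear: $|u_m|^{p-1}u_m=u_m$, so the Duhamel term $\mathcal N_{u_m}$ in the decomposition~\eqref{eq:definition.v.w.signed} is linear in $u_m$. Combining $\|\mathcal L_{u_0}(\cdot,t)\|_{L^q(\R^N)}\le\|u_0\|_{L^q(\R^N)}$ (from $\|Z(\cdot,t)\|_{L^1(\R^N)}=1$) with Young's convolution inequality and the estimate~\eqref{eq-estimate Y} for $r=1$, where $\sigma(1)=1-\alpha\in(0,1)$, one gets for every $t\in(0,T_m)$ the linear singular integral inequality
$$
\|u_m(\cdot,t)\|_{L^q(\R^N)}\le \|u_0\|_{L^q(\R^N)}+K\int_0^t(t-s)^{-(1-\alpha)}\|u_m(\cdot,s)\|_{L^q(\R^N)}\,{\rm d}s.
$$
Because $u_m\in C([0,T_m);L^q(\R^N))$, the function $t\mapsto\|u_m(\cdot,t)\|_{L^q(\R^N)}$ is finite for each $t<T_m$ and locally bounded on $[0,T_m)$, so I may feed this inequality into the singular (fractional) Gronwall inequality of Henry type. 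This yields a bound of Mittag--Leffler growth, $\|u_m(\cdot,t)\|_{L^q(\R^N)}\le \|u_0\|_{L^q(\R^N)}\,\Phi(t)$ with $\Phi$ continuous and increasing; since $T_m<\infty$, taking the supremum over $(0,T_m)$ gives the desired finite bound $\sup_{t\in(0,T_m)}\|u_m(\cdot,t)\|_{L^q(\R^N)}\le\|u_0\|_{L^q(\R^N)}\,\Phi(T_m)<\infty$.

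With the uniform bound in hand, Theorem~\ref{teo-extension} applied to $u_m$ (with $T=T_m$) gives an $L^q$-solution $\bar u$ with initial datum $u_0$ and existence time $T_m+\tau$, $\tau>0$. The existence of such a solution contradicts the maximality of $T_m$. Hence $T_m=\infty$, i.e.\ $u_m$ is global.

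The only genuine analytic ingredient---and the step I expect to be the crux---is the a priori bound: closing the linear singular Gronwall estimate and verifying that its right-hand side stays finite up to the \emph{finite} time $T_m$. Everything else is bookkeeping using results already established. I emphasize that linearity ($p=1$) is essential here: for $p>1$ the corresponding inequality is superlinear and need not admit a global-in-time bound, which is precisely why blow-up becomes possible in that regime.
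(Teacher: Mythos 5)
Your proof is correct, but it takes a genuinely different route from the paper's. The paper obtains this corollary from Theorem~\ref{teo-extension} together with the observation, recorded in the remark following that theorem, that for $p=1$ the time increment $\tau$ can be chosen independently of $\sup_{t\in(0,T)}\|u(\cdot,t)\|_{L^q(\R^N)}$ (the smallness constraints on $\tau$ involve only $C_1^{p-1}=1$); since each extension step preserves the finiteness of the supremum, one simply iterates the extension with a fixed step $\tau$ and reaches any finite time, so $T_m=\infty$. You instead exploit linearity at the level of the integral equation: Young's inequality with $\|Y(\cdot,t-s)\|_{L^1(\R^N)}\le K(t-s)^{-(1-\alpha)}$ (estimate~\eqref{eq-estimate Y} with $r=1$, $\sigma(1)=1-\alpha<1$) gives a linear Volterra inequality with integrable singularity, and the singular Gronwall lemma of Henry type yields an a priori bound of Mittag--Leffler type on any finite time interval, after which a single application of Theorem~\ref{teo-extension} produces a solution with existence time exceeding $T_m$, the desired contradiction. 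Both arguments are sound and both ultimately rest on Theorem~\ref{teo-extension}; yours buys an explicit quantitative growth estimate for $\|u(\cdot,t)\|_{L^q(\R^N)}$ at the price of importing the singular Gronwall inequality, which the paper neither states nor proves, whereas the paper's iteration stays entirely within its own toolkit and gives global existence without any growth rate. A minor stylistic point: the contradiction framing is not needed, since your a priori bound holds on $[0,T_m)$ unconditionally and directly forbids $T_m<\infty$ via the extension theorem; this does not affect correctness.
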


\begin{coro}\label{coro-blow up}
Let $p\in(1,\infty)$, $q$ in the good range~\eqref{eq:good.range}, and $u_0\in L^q(\R^N)$. Let $u$ be the maximal $L^q$-solution to~\eqref{eq-problem} with initial datum $u_0\in L^q$ and $T$ its maximal existence time. If $T<\infty$, then
\begin{equation*}\label{eq-blow up}
\limsup_{t\nearrow T}\|u(\cdot,t)\|_{L^{ q }(\R^N)}=+\infty.
\end{equation*}
\end{coro}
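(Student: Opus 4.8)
The plan is to argue by contraposition: I will show that if the $L^q$ norm does \emph{not} blow up along the approach to $T$, then the solution can be continued past $T$, contradicting maximality. So assume, for contradiction, that
$$
\limsup_{t\nearrow T}\|u(\cdot,t)\|_{L^{q}(\R^N)}=M<\infty.
$$

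The first step is to upgrade this limsup control near the endpoint to a uniform bound on the whole interval $(0,T)$. Since $q$ is in the good range, Proposition~\ref{prop:continuity} guarantees that $u\in C([0,T);L^q(\R^N))$, so the map $t\mapsto\|u(\cdot,t)\|_{L^q(\R^N)}$ is continuous on $[0,T)$. The limsup hypothesis provides some $t^*\in(0,T)$ with $\|u(\cdot,t)\|_{L^q(\R^N)}\le M+1$ for all $t\in(t^*,T)$, while on the compact interval $[0,t^*]$ the continuous norm is bounded. Combining the two estimates yields
$$
\sup_{t\in(0,T)}\|u(\cdot,t)\|_{L^q(\R^N)}<\infty.
$$

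With this uniform bound in hand, Theorem~\ref{teo-extension} applies directly: there exists $\tau>0$ and an $L^q$-solution $\bar u$ to~\eqref{eq-problem} with the same initial datum $u_0$ and existence time $\bar T=T+\tau>T$. This contradicts the definition of $T=T_m(u_0)$ as the supremum of all existence times of $L^q$-solutions with initial datum $u_0$. Hence the assumption was false, and $\limsup_{t\nearrow T}\|u(\cdot,t)\|_{L^q(\R^N)}=+\infty$.

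I expect the only genuine subtlety to be the passage from the endpoint limsup to the global supremum in the second step; once the uniform bound is secured, the contradiction is immediate from the extension theorem and the very definition of the maximal existence time. (One could alternatively invoke Theorem~\ref{teo-local uniqueness} to observe that the extended solution $\bar u$ coincides with the maximal solution on $(0,T)$ and properly prolongs it, but this is not strictly needed for the contradiction, since the mere existence of an $L^q$-solution with existence time exceeding $T_m$ already violates the supremum.)
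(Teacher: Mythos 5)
Your proof is correct and is precisely the argument the paper intends: the corollary is stated as an immediate consequence of Theorem~\ref{teo-extension}, obtained by contradiction exactly as you describe (a finite limsup plus continuity from Proposition~\ref{prop:continuity} — or even just the local boundedness built into the definition of an $L^q$-solution — gives a uniform bound on $(0,T)$, and the extension theorem then contradicts maximality). No gaps.
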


\subsection{More general nonlinearities}

Arguments similar to the ones that we have used in the previous paragraphs allow to deal with some nonlinearities that are not powers. A case that only requires obvious changes in the proofs is that of globally Lipschitz nonlinearities $f$ such that $f(0)=0$.
\begin{teo}\label{teo-Lipschitz} Let $f:\R\to\R$ be a  Lipschitz continuous function such that $f(0)=0$. Let $u_0\in L^ q (\R^N)$ for some $ q \ge1$. Then, there exists a unique global $L^q$-solution to
	\begin{equation}\label{eq-problem f}
\partial_t^\alpha u+(-\Delta)^\beta u=f(u)\quad\mbox{in }\R^N\times(0,\infty),\qquad
		u(\cdot,0)=u_0\quad\mbox{in }\R^N,
\end{equation}
which belongs moreover to $C([0,\infty);L^\infty(\mathbb{R}^N))$.
\end{teo}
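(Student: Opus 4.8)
The plan is to mimic the fixed-point scheme of Theorem~\ref{teo-local existence}, but exploit the global Lipschitz condition and $f(0)=0$ to obtain a contraction constant that can be made small on \emph{any} fixed time interval, independently of the size of the solution. Let $L$ be the Lipschitz constant of $f$, so that $|f(s)|=|f(s)-f(0)|\le L|s|$ for all $s\in\R$, and $|f(s_1)-f(s_2)|\le L|s_1-s_2|$. The crucial structural gain over the power nonlinearity is that $f$ maps $L^q(\R^N)$ into itself (with norm controlled by $L$) \emph{without any loss of integrability}: we have $\|f(v(\cdot,s))\|_{L^q(\R^N)}\le L\|v(\cdot,s)\|_{L^q(\R^N)}$. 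Consequently, in the Duhamel term we may take $r=1$ in the estimate~\eqref{eq-estimate Y} for $Y$, since now the spatial convolution pairs $f(v)\in L^q$ with $Y(\cdot,t-s)\in L^1(\R^N)$ via Young's inequality. With $r=1$ we have $\sigma(1)=1-\alpha<1$ for every admissible $q\in[1,\infty]$, so the time singularity of $Y$ is always integrable and no good-range restriction on $q$ is needed.

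First I would fix an arbitrary $t_0>0$ and set up the operator
\[
\T v(x,t)=\int_{\R^N}u_0(y)Z(x-y,t)\,{\rm d}y+\int_0^t\int_{\R^N}f(v(y,s))Y(x-y,t-s)\,{\rm d}y{\rm d}s
\]
on the space $L^\infty(0,t_0;L^q(\R^N))$. Using $\|Z(\cdot,t)\|_{L^1}=1$, Young's inequality, and $\|f(v(\cdot,s))\|_{L^q}\le L\|v(\cdot,s)\|_{L^q}$, I would bound for $v_1,v_2$ in this space
\[
\|\T v_1(\cdot,t)-\T v_2(\cdot,t)\|_{L^q(\R^N)}\le L K\int_0^t(t-s)^{-(1-\alpha)}\,{\rm d}s\,\vvvert v_1-v_2\vvvert
= \frac{LK}{\alpha}\,t_0^{\alpha}\,\vvvert v_1-v_2\vvvert.
\]
Choosing $t_0$ small enough that $\tfrac{LK}{\alpha}t_0^{\alpha}\le\tfrac12$ makes $\T$ a contraction, and a parallel computation shows $\T$ maps a suitable ball into itself; this yields a unique $L^q$-solution on $(0,t_0)$ satisfying~\eqref{eq:initial.datum.Lq} exactly as in the proof of Theorem~\ref{teo-local existence} (the verification~\eqref{eq:initial.data.Nu} being identical). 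The continuity $u\in C([0,t_0);L^q)$ follows from the argument of Proposition~\ref{prop:continuity}, which uses only the kernel estimates and never the specific form of the nonlinearity, once $|u|^{p-1}u$ is replaced by $f(u)$ and $\|f(u(\cdot,s))\|_{L^q}\le L\|u(\cdot,s)\|_{L^q}$ is invoked.

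The key point, and the reason the solution is \emph{global}, is that the contraction threshold $t_0$ depends only on $L,K,\alpha$ and not on $\|u_0\|_{L^q}$ nor on the current size of the solution. Therefore I would iterate: having solved on $(0,t_0)$, I would use $u(\cdot,t_0)\in L^q(\R^N)$ as a new initial datum and repeat on $(t_0,2t_0)$, and so on, with the \emph{same} step $t_0$ at every stage. This is precisely the mechanism recorded in the Remark after Theorem~\ref{teo-extension}, where for the linear case $p=1$ the increment $\tau$ is independent of the solution's norm; the globally Lipschitz case behaves identically. Gluing these pieces via the uniqueness theorem (Theorem~\ref{teo-local uniqueness}, whose proof again transfers verbatim) produces a single $L^q$-solution defined for all $t\in[0,\infty)$. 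The main obstacle to watch is the claim $u\in C([0,\infty);L^\infty(\R^N))$: the fixed-point space is $L^q$, so the $L^\infty$ regularity is an extra smoothing statement. To obtain it I would run the same fixed-point argument in $L^\infty(0,t_0;L^\infty(\R^N))$, noting that $q=\infty$ is always admissible here (and $r=1$ still works since $Y(\cdot,t-s)\in L^1$), which directly yields an $L^\infty$-solution; continuity into $L^\infty$ then follows from the Proposition~\ref{prop:continuity} argument with $q=\infty$. A growth bound such as $\|u(\cdot,t)\|_{L^\infty}\le\|u_0\|_{L^\infty}e^{cLt^{\alpha}}$, obtained from a Gronwall-type inequality adapted to the weakly singular kernel (a fractional Gronwall lemma), confirms no finite-time blow-up and is the one genuinely technical ingredient beyond the routine adaptation of the earlier proofs.
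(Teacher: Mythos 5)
Your fixed-point scheme is exactly the one the paper intends: it states Theorem~\ref{teo-Lipschitz} without proof as an ``obvious change'' of Theorems~\ref{teo-local existence}--\ref{teo-local uniqueness}, and the proof it does write out for Theorem~\ref{teo-bounded} is precisely your argument (the bound $\|f(v(\cdot,s))\|_{L^q}\le L\|v(\cdot,s)\|_{L^q}$ from $f(0)=0$, Young's inequality against $\|Y(\cdot,t-s)\|_{L^1}\le K(t-s)^{-(1-\alpha)}$, a contraction constant of order $Lt_0^{\alpha}/\alpha$ independent of the size of the datum, and iteration with a uniform time step). The existence, uniqueness, globality and $C([0,\infty);L^q)$ parts of your write-up are correct and match the paper.

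The one place where your proposal breaks down is the final paragraph on the claim $u\in C([0,\infty);L^{\infty}(\R^N))$. You propose to rerun the fixed point in $L^\infty(0,t_0;L^\infty(\R^N))$, ``noting that $q=\infty$ is always admissible,'' but that argument needs $u_0\in L^\infty(\R^N)$, which is not assumed here: for a general $u_0\in L^q(\R^N)$ the linear part $\mathcal{L}_{u_0}(\cdot,t)=u_0*Z(\cdot,t)$ need not be bounded, since $Z(\cdot,t)\in L^r(\R^N)$ only for $r<p_{\rm c}=\frac{N}{N-2\beta}$, so Young's inequality upgrades $L^q$ to $L^\infty$ only when $q>\frac{N}{2\beta}$. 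There is no instantaneous $L^q\to L^\infty$ smoothing available for small $q$, so your route cannot deliver the $L^\infty$-continuity as stated; the conclusion in the theorem is evidently a misprint for $C([0,\infty);L^q(\mathbb{R}^N))$ (compare Theorem~\ref{teo-bounded}, where the datum is taken in $L^\infty$ and the $C([0,\infty);L^\infty)$ conclusion is the natural one). Apart from this, the appeal to a ``fractional Gronwall lemma'' at the end is not needed: the uniform-step iteration you already set up gives globality by itself.
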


If $f$ is yet globally Lipschitz, but $f(0)\neq0$. we still have a global existence result if the initial datum is bounded.
\begin{teo}\label{teo-bounded} Let $f:\R\to\R$ be a Lipschitz continuous function  and let $u_0\in L^\infty(\R^N)$. Then, there exists a unique global $L^\infty$-solution to \eqref{eq-problem f}, which belongs moreover to $C([0,\infty);L^\infty(\mathbb{R}^N))$.
\end{teo}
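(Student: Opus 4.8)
The plan is to mimic the fixed-point scheme of Theorem~\ref{teo-local existence}, but now working in $L^\infty(\R^N)$ throughout. The point is that when $f(0)\neq0$ the reaction term $f(u)$ is bounded but fails to lie in any $L^q(\R^N)$ with $q<\infty$ even when $u$ does (e.g.\ $u\equiv0$ gives $f(u)=f(0)\neq0$), which is exactly why one must restrict to $q=\infty$ and to bounded data. The two facts about $f$ that I would use are the global Lipschitz bound $|f(a)-f(b)|\le L|a-b|$ and the consequent linear growth $|f(a)|\le|f(0)|+L|a|$. The essential kernel estimate is \eqref{eq-estimate Y} with $r=1$: since $1\in[1,p_{\rm c})$ and $\sigma(1)=1-\a<1$, one has $\|Y(\cdot,\tau)\|_{L^1(\R^N)}\le K\tau^{-(1-\a)}$, integrable in $\tau$ near $0$, so that by Young's inequality $\mathcal N$ maps bounded functions to bounded functions, with $\int_0^t\|Y(\cdot,t-s)\|_{L^1}\,{\rm d}s\le \frac{K}{\a}t^\a$.

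First I would set up the Banach fixed point for local existence. For $v\in L^\infty(0,t_0;L^\infty(\R^N))$ define $\T v=\mathcal L_{u_0}+\int_0^t\int_{\R^N}f(v(y,s))Y(x-y,t-s)\,{\rm d}y\,{\rm d}s$. The linear-growth bound shows $\T$ maps this space into itself, and the Lipschitz bound together with $\|Y(\cdot,\tau)\|_{L^1}\le K\tau^{-(1-\a)}$ gives
$$\vvvert \T v_1-\T v_2\vvvert\le LK\frac{t_0^\a}{\a}\,\vvvert v_1-v_2\vvvert.$$
Hence $\T$ is a contraction on the complete space $L^\infty(0,t_0;L^\infty(\R^N))$ for $t_0$ small; since $f$ is \emph{globally} Lipschitz, there is no need to restrict to a ball, and, decisively in contrast with the power case, this contraction constant depends only on $L,K,\a$, not on $\|u_0\|_{L^\infty}$ nor on the size of the solution.

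For global existence and uniqueness at once I would fix an arbitrary $T>0$ and run a single fixed point on $L^\infty(0,T;L^\infty(\R^N))$ equipped with the weighted (Bielecki) norm $\vvvert v\vvvert_\l:=\sup_{t\in(0,T)}e^{-\l t}\|v(\cdot,t)\|_{L^\infty(\R^N)}$, which is equivalent to the usual one on a bounded interval, so the space remains complete. The same Lipschitz estimate yields
$$e^{-\l t}\|\T v_1(\cdot,t)-\T v_2(\cdot,t)\|_{L^\infty}\le LK\,\vvvert v_1-v_2\vvvert_\l\int_0^t e^{-\l\tau}\tau^{-(1-\a)}\,{\rm d}\tau\le LK\,\Gamma(\a)\,\l^{-\a}\,\vvvert v_1-v_2\vvvert_\l,$$
and since $\Gamma(\a)\l^{-\a}\to0$ as $\l\to\infty$, choosing $\l$ large makes $\T$ a contraction on all of $(0,T)$. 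This produces a unique solution on every $(0,T)$, hence a unique global $L^\infty$-solution. (Alternatively one could iterate the extension of Theorem~\ref{teo-extension}, whose time step is here \emph{uniform} precisely because the contraction constant above is independent of the solution's size; the weighted-norm argument simply bypasses the bookkeeping associated with the memory term.)

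Finally, continuity. On $(0,\infty)$ I would argue exactly as in Proposition~\ref{prop:continuity}, decomposing the solution as in~\eqref{eq:definition.v.w.signed} and using \eqref{eq-estimate Zt} (with $r=1$) for the linear part and \eqref{eq-estimate Yt}, \eqref{eq-estimate Y} (again with $r=1$) for the nonlinear part, now with $|u(\cdot,s)|^{p-1}u(\cdot,s)$ replaced everywhere by the bounded function $f(u(\cdot,s))$; continuity at $t=0$ follows from the very definition of $L^\infty$-solution together with Remark~\ref{rk:initial.data.Lq}. I expect the only genuinely delicate point to be the memory structure of the equation: the nonlinear integral starts at $0$ and so forbids a naive restart from an intermediate time. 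This is exactly what the exponentially weighted norm is designed to absorb, reducing global existence to a one-shot contraction once the uniform-in-size time step has been secured.
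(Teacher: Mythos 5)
Your proposal is correct. The local-existence step is the same fixed-point scheme the paper uses (the paper runs it on the ball $\K$ with $C_0=\max\{2\|u_0\|_{L^\infty(\R^N)},|f(0)|\}$, whereas you observe that global Lipschitzness lets you contract on the whole of $L^\infty(0,t_0;L^\infty(\R^N))$ --- a harmless simplification). Where you genuinely diverge is in the globalization: the paper notes that the contraction time step $t_0$, and likewise the extension increment $\tau$ in the analogue of Theorem~\ref{teo-extension}, depend only on $L$ and $\alpha$ and not on the size of the solution, and then iterates the extension to reach any time; you instead run a one-shot contraction on $(0,T)$ for arbitrary $T$ in the Bielecki norm $\sup_t e^{-\l t}\|v(\cdot,t)\|_{L^\infty}$, using $\int_0^\infty e^{-\l\tau}\tau^{\alpha-1}\,{\rm d}\tau=\Gamma(\alpha)\l^{-\alpha}\to0$. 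Both are valid; your route delivers existence and uniqueness on each $(0,T)$ in a single stroke and, as you say, avoids the bookkeeping with the memory term $\int_0^{T-\tau}$ that the paper's operator $\bar\T$ has to carry, while the paper's route reuses machinery (Theorems~\ref{teo-extension} and~\ref{teo-local uniqueness}) already set up for the power nonlinearity. Two small points you should make explicit rather than wave at: (i) the fixed point must be checked to satisfy~\eqref{eq:initial.datum.Lq}, which is not automatic from being a fixed point but follows from $\|\mathcal{N}_u(\cdot,t)\|_{L^\infty(\R^N)}\le\big(|f(0)|+L\sup_s\|u(\cdot,s)\|_{L^\infty(\R^N)}\big)Kt^\alpha/\alpha\to0$ (this is the content of Remark~\ref{rk:initial.data.Lq}, which reduces the matter to exactly this bound); and (ii) the solutions produced on $(0,T)$ for different $T$ coincide on overlaps by the same contraction argument, so they patch into one global solution.
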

\begin{proof}
The first step is to prove existence for small times. We proceed as in Theorem \ref{teo-local existence}, now with $C_0=\max\{2\|u_0\|_{L^\infty(\R^N)};|f(0)|\}$, and find that $\T:\K\to\K$ is a contraction if $t_0$ is small, how small not depending on  the value of $C_0$. Indeed, let $L$ be a Lipschitz constant for $f$. Then, if $v\in\K$,  and $0< t\le t_0\le\big(\frac\alpha{2(1+L)}\big)^{1/\alpha}$,
\begin{equation*}
\label{eq-1st bound}
\|\T v(\cdot,t)\|_{L^\infty(\R^N)}\le \frac{C_0}2+\big(|f(0)|+LC_0\big)\frac{t^\alpha}\alpha\le C_0\Big(\frac12+(1+L)\frac{t_0^\alpha}\alpha\Big)\le C_0.
\end{equation*}
On the other hand, if $v_1,v_2\in \K$ and $0<t\le t_0< 	\big(\frac\alpha{2L}\big)^{1/\alpha}$,
\begin{equation*}
\label{eq-2nd bound}
\begin{aligned}\|\T v_1(\cdot,t)-\T v_2(\cdot,t)\|_{L^\infty(\R^N)}&\le L\frac{t_0^\alpha}\alpha \sup_{s\in(0,t_0)}\|v_1(\cdot,s)-v_2(\cdot,s)\|_{L^\infty(\R^N)}\\&<\frac12 \sup_{s\in(0,t_0)}\|v_1(\cdot,s)-v_2(\cdot,s)\|_{L^\infty(\R^N)}.
\end{aligned}
\end{equation*}
	
In a similar way we can prove a result as the one in Theorem \ref{teo-extension}. As $\tau$ can be taken independently of $C_1=\max\big\{2\sup_{t\in(0,T)}\|u(\cdot,t)\|_{L^\infty(\R^N)};|f(0)|\big\}$, by iterating the result we find that the solution is global.
	
Uniqueness follows easily as in Theorem \ref{teo-local uniqueness}, and continuity as in Proposition~\ref{prop:continuity}.
\end{proof}

When $f$ is only locally Lipschitz, we cannot ascertain global existence, but we can yet show the existence of a unique maximal $L^\infty$-solution to \eqref{eq-problem f} for any $u_0\in L^\infty(\mathbb{R}^N)$. We omit the proof, since it is an easy adaptation of previous arguments.
\begin{teo}\label{teo-existence local lipschitz}
Let $f:\R\to\R$ be locally Lipschitz continuous and $u_0\in L^\infty(\R^N)$. Then, there exists a unique maximal $L^\infty$-solution to~\eqref{eq-problem f}. Moreover, if the maximal existence time $T$ is finite, then
\[
    \limsup_{t\nearrow T}\|u(\cdot,t)\|_{L^\infty(\R^N)}=+\infty.
\]
 \end{teo}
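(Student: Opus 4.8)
The plan is to follow the same four-step program used for the power nonlinearity in Theorems~\ref{teo-local existence}, \ref{teo-local uniqueness} and~\ref{teo-extension}, and for the globally Lipschitz case in Theorem~\ref{teo-bounded}, the only new ingredient being that $f$ is now controlled \emph{only} on the range where the solution lives. For local existence I would fix $C_0=\max\{2\|u_0\|_{L^\infty(\R^N)},|f(0)|\}$ and run the contraction mapping argument on the ball $\K=\{v:\vvvert v\vvvert\le C_0\}$ exactly as in Theorem~\ref{teo-bounded}, replacing the global Lipschitz constant by a Lipschitz constant $L=L(C_0)$ for $f$ on the interval $[-C_0,C_0]$. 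Since every $v\in\K$ takes values in this interval, the bounds $|f(v)|\le|f(0)|+L C_0$ and $|f(v_1)-f(v_2)|\le L|v_1-v_2|$ hold pointwise, and together with $\|Z(\cdot,t)\|_{L^1(\R^N)}=1$ and the time-integrability of $\|Y(\cdot,t-s)\|_{L^1(\R^N)}$ coming from~\eqref{eq-estimate Y} (recall $\sigma(1)=1-\alpha<1$, which yields a factor of order $t^\alpha$) they make $\T$ map $\K$ into itself and be a strict contraction for $t_0$ small. The key difference from Theorem~\ref{teo-bounded} is that now $t_0$ must depend on $C_0$, hence on $\|u_0\|_{L^\infty(\R^N)}$, because $L$ does.

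Continuity of the local solution as a curve in $L^\infty(\R^N)$ is obtained verbatim as in Proposition~\ref{prop:continuity}, and uniqueness as in Theorem~\ref{teo-local uniqueness}: given two $L^\infty$-solutions with the same datum, I would choose $C_0$ larger than twice the supremum of both their norms on a common subinterval, use the Lipschitz constant of $f$ on $[-C_0,C_0]$, and conclude by the same open--closed (continuation) argument that they coincide. With existence, uniqueness and continuity in hand, the maximal existence time $T=T_m(u_0)$ and the maximal $L^\infty$-solution $u_m\in C([0,T);L^\infty(\R^N))$ are defined exactly as in the paragraph on maximal solutions.

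For the blow-up alternative I would first prove the analog of the extension Theorem~\ref{teo-extension}: if $\sup_{t\in(0,T)}\|u(\cdot,t)\|_{L^\infty(\R^N)}<\infty$ then the solution extends to an interval $(0,T+\tau)$ for some $\tau>0$. The construction is the same---freeze $u$ on $(0,T-\tau)$ and solve a fixed-point problem for the tail on $(T-\tau,T+\tau)$---now with $C_1=2\sup_{t\in(0,T)}\|u(\cdot,t)\|_{L^\infty(\R^N)}$ and the Lipschitz constant $L(C_1)$ of $f$ on $[-C_1,C_1]$. The decisive point, and the reason the conclusion is a blow-up dichotomy rather than global existence, is that here $\tau$ depends on $C_1$ through $L(C_1)$, so iterating does not give a uniform time step and we cannot reach $t=\infty$, in sharp contrast with Theorem~\ref{teo-bounded}. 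The theorem then follows as Corollary~\ref{coro-blow up} did: if $T<\infty$ and $\limsup_{t\nearrow T}\|u(\cdot,t)\|_{L^\infty(\R^N)}$ were finite, then, since $u\in C([0,T);L^\infty(\R^N))$ is bounded on every compact subinterval, $\sup_{t\in(0,T)}\|u(\cdot,t)\|_{L^\infty(\R^N)}$ would be finite, the extension result would produce a solution past $T$, and this would contradict the maximality of $T$.

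The routine steps (the contraction estimates, the continuity argument, the five-term splitting in the extension) are identical to those already carried out in Section~\ref{sect-local existence}, requiring only the obvious replacement of the $p(2C_0)^{p-1}$-type constants by the local Lipschitz constants. The one place deserving a line of care---and the only genuine difficulty---is to verify, at each stage, that the fixed-point iterates stay inside the ball $\K$ on which the chosen Lipschitz constant of $f$ is valid, so that the pointwise bounds on $f$ used in the estimates are legitimate; this is guaranteed by the choice $C_0=\max\{2\|u_0\|_{L^\infty(\R^N)},|f(0)|\}$ together with the smallness of $t_0$. Beyond the bookkeeping of these solution-dependent constants, I do not expect any obstacle.
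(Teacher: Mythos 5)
Your proposal is correct and follows exactly the route the paper intends: the paper omits the proof of Theorem~\ref{teo-existence local lipschitz} precisely because it is the adaptation you describe, namely running the fixed-point, uniqueness, continuity and extension arguments of Theorems~\ref{teo-local existence}, \ref{teo-local uniqueness}, \ref{teo-extension} and~\ref{teo-bounded} with the Lipschitz constant of $f$ on $[-C_0,C_0]$ (resp.\ $[-C_1,C_1]$) in place of the global one, so that the time step depends on the size of the solution and only the blow-up dichotomy, not global existence, survives. You also correctly identify the one point needing care, that the iterates must remain in the ball where the chosen Lipschitz constant is valid.
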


\section{Basic theory outside the good range}
\label{sect:basic.theory.outside.GR}
\setcounter{equation}{0}

There are two reasons why $q$ may fail to be in the good range: either $p\in[1,p_{\rm c})$ and $q\in[1,p)$, or $p\in[p_c,\infty)$ and  $q\in[1,\ell]$ (see Remark \ref{rem-gr}). In order to guarantee the existence of an $L^q$-solution in these cases we require the initial data to satisfy an extra integrability condition.

If   $p\in[1,p_{\rm c})$, $q\in[1,p)$,  and $u_0\in L^p(\mathbb{R}^N)$, we know from the previous section that there exists a maximal $L^p$-solution to~\eqref{eq-problem}. If in addition $u_0\in L^q(\mathbb{R}^N)$, then this $L^p$-solution is also an $L^q$-solution, as we show next.
\begin{teo}
\label{thm:local.p.subcritical}
Let $p\in[1,p_{\rm c})$, $q\in[1,p)$, and $u_0\in L^p(\mathbb{R}^d)\cap L^q(\mathbb{R}^d)$. If $u$ is an $L^p$-solution to~\eqref{eq-problem}, it is also an $L^q$-solution. Moreover, $u\in C([0,T);L^q(\R^N))$.
\end{teo}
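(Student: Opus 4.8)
The statement asserts that an $L^p$-solution $u$ with $u_0\in L^p\cap L^q$ (where $q<p<p_{\rm c}$) is automatically an $L^q$-solution on the same existence interval, and is continuous in $L^q$. Since $u$ already satisfies the Duhamel identity~\eqref{eq-formula} in $L^p$ for a.e.\ $t$, the same identity holds pointwise a.e.\ in $x$, so the only thing to verify is that both terms in the decomposition~\eqref{eq:definition.v.w.signed} lie in $L^q(\R^N)$ and that their $L^q$ norms are controlled by $\sup_{s\in(0,t)}\|u(\cdot,s)\|_{L^p(\R^N)}$, which is finite on compact subintervals because $u\in L^\infty_{\rm loc}([0,T);L^p(\R^N))$. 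I would then recover~\eqref{eq:initial.datum.Lq} in $L^q$ from the corresponding smallness of $\mathcal{N}_u$ as $t\to0^+$ (cf.\ Remark~\ref{rk:initial.data.Lq}), and deduce continuity by the argument already used for Proposition~\ref{prop:continuity}, now carried out with the exponent $q$ in place of $p$.

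First I would estimate the linear part $\mathcal{L}_{u_0}(\cdot,t)=Z(\cdot,t)*u_0$. Since $u_0\in L^q(\R^N)$ and $\|Z(\cdot,t)\|_{L^1(\R^N)}=1$, Young's inequality gives $\|\mathcal{L}_{u_0}(\cdot,t)\|_{L^q(\R^N)}\le\|u_0\|_{L^q(\R^N)}$, and Remark~\ref{rk:initial.data.Lq} yields $\mathcal{L}_{u_0}(\cdot,t)\to u_0$ in $L^q$ as $t\to0^+$. The substance is the nonlinear part $\mathcal{N}_u$. I would apply Young's inequality in the form $\|f*g\|_{L^q}\le\|f\|_{L^a}\|g\|_{L^b}$ with $1+\tfrac1q=\tfrac1a+\tfrac1b$, taking $g=Y(\cdot,t-s)$ and $f=|u(\cdot,s)|^{p-1}u(\cdot,s)$, so that $\|f\|_{L^a}=\|u(\cdot,s)\|_{L^{pa}}^p$. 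I want to exploit the extra integrability: writing $\|u(\cdot,s)\|_{L^p}$ on the right forces $pa=p$, i.e.\ $a=1$, and then $b=q$. Thus I would use
\begin{equation*}
\|\mathcal{N}_u(\cdot,t)\|_{L^q(\R^N)}\le\int_0^t\big\||u(\cdot,s)|^{p-1}u(\cdot,s)\big\|_{L^1(\R^N)}\|Y(\cdot,t-s)\|_{L^q(\R^N)}\,{\rm d}s
=\int_0^t\|u(\cdot,s)\|_{L^p(\R^N)}^p\,\|Y(\cdot,t-s)\|_{L^q(\R^N)}\,{\rm d}s.
\end{equation*}
For this to work I need $Y(\cdot,\cdot)\in L^1_{\rm loc}([0,\infty);L^q(\R^N))$, i.e.\ $q<p_{\rm c}$ so that $\sigma(q)<1$; this is exactly guaranteed by the hypotheses $q<p<p_{\rm c}$ together with the characterization recalled after~\eqref{eq-estimate Yt}. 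Estimate~\eqref{eq-estimate Y} with $r=q$ then bounds the integral by $K\sup_{s\in(0,t)}\|u(\cdot,s)\|_{L^p(\R^N)}^p\,\tfrac{t^{1-\sigma(q)}}{1-\sigma(q)}$, which is finite for every $t\in(0,T)$ and tends to $0$ as $t\to0^+$, giving both $\mathcal{N}_u(\cdot,t)\in L^q(\R^N)$ and~\eqref{eq:initial.data.Nu} in the $L^q$ norm. Combined with the linear part and Remark~\ref{rk:initial.data.Lq}, this shows $u$ is an $L^q$-solution with existence time $T$.

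The subtle point — and the one I would flag as the main thing to get right rather than a deep obstacle — is the choice of the Young exponent: the whole gain comes from placing the nonlinearity in $L^1$ (measuring $|u|^{p-1}u$ by the $L^p$ norm we already control) and paying the price on the kernel side, which is affordable precisely because $q<p_{\rm c}$ keeps $\sigma(q)<1$. Once integrability and the initial-trace condition are established, continuity $u\in C([0,T);L^q(\R^N))$ follows by repeating verbatim the three-term splitting (${\rm I}+{\rm II}+{\rm III}$) of Proposition~\ref{prop:continuity}: the linear part uses~\eqref{eq-estimate Zt} with $r=1$ as before, and each of ${\rm I}$, ${\rm II}$, ${\rm III}$ is estimated by the same Young-with-$a=1$ device, invoking~\eqref{eq-estimate Yt} (with $r=q$) on the first piece and~\eqref{eq-estimate Y} (with $r=q$) on the last two, with $\sup_{s}\|u(\cdot,s)\|_{L^p}^p$ pulled out in front each time. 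All the resulting time integrals converge because $\sigma(q)<1$, and they vanish as $t_2\to t_1$, yielding the claimed continuity.
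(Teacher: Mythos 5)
Your proposal is correct and follows essentially the same route as the paper: bound the linear part by Young's inequality with $\|Z(\cdot,t)\|_{L^1}=1$, and estimate $\mathcal{N}_u$ by placing $|u|^{p-1}u$ in $L^1$ (so its norm is $\|u(\cdot,s)\|_{L^p}^p$) and the kernel in $L^q$, which is admissible since $q<p<p_{\rm c}$ gives $\sigma(q)<1$. The paper likewise reduces the initial-trace condition to Remark~\ref{rk:initial.data.Lq} and defers continuity to the argument of Proposition~\ref{prop:continuity}, so there is nothing to add.
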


\begin{proof}
We make the decomposition~\eqref{eq:definition.v.w.signed}. On the one hand,
\begin{equation*}
\label{eq:bound.mathcalL}
\|\mathcal{L}_{u_0}(\cdot,t)\|_{L^q(\mathbb{R}^N)}\le \|u_0\|_{L^q(\mathbb{R}^N)}\|Z(\cdot,t)\|_{L^1(\mathbb{R}^N)}\le \|u_0\|_{L^q(\mathbb{R}^N)}.
\end{equation*}
Hence, to check that $u$ is an $L^q$-solution it is enough to show  that $\mathcal{N}_u\in L^\infty(0,\tau;L^{q}(\R^N))$ for all $\tau\in (0,T)$, and~\eqref{eq:initial.data.Nu}; see Remark~\ref{rk:initial.data.Lq}.

Let $\tau\in(0,T)$. By hypotheses, $u\in L^\infty(0,\tau;L^p(\R^N))$.  Since $p\in[1,p_{\rm c})$, then $q\in [1,p_{\rm c})$, so that $\sigma(q)<1$. Therefore,
\begin{align*}
 	\|\mathcal{N}_u(\cdot,t)\|_{L^q(\R^N)}&\le \int_0^t\||u(\cdot,s)|^p\|_{L^1(\R^N)}\|Y(\cdot,t-s)\|_{L^q(\R^N)}\,{\rm d}s\\
 &\le K\sup_{s\in(0,\tau)}\|u(\cdot,s)\|_{L^p(\R^N)}^p \int_0^t(t-s)^{-\sigma(q)}\,{\rm d}s\le K\sup_{s\in(0,\tau)}\|u(\cdot,s)\|_{L^p(\R^N)}^p \frac{t^{1-\sigma(q)}}{1-\sigma(q)}
\end{align*}
if $t\in(0,\tau)$, which implies both $\mathcal{N}_u\in L^\infty(0,\tau;L^{q}(\R^N))$, and~\eqref{eq:initial.data.Nu}.

The proof of continuity is similar to that of Proposition~\ref{prop:continuity}. We omit the details.
\end{proof}

If $p\in[p_c,\infty)$, $q\in[1,\ell]$, and $u_0\in L^{\hat q}(\R^N)$ for some $\hat q$ in the distinguished range~\eqref{eq:distinguished.range},
we know from the results in Section~\ref{sect-local existence} that there exists a maximal $L^{\hat q}$-solution to~\eqref{eq-problem}. If in addition $u_0\in L^q(\mathbb{R}^N)$, we will show that this $L^{\hat q}$-solution is also an $L^q$-solution.
\begin{rem} If $p\in[p_c,\infty)$, $q\in[1,\ell]$, and $u_0\in L^q(\R^N)\cap L^{\tilde q}(\mathbb{R}^N)$ for some $\tilde q$ in the good range~\eqref{eq:good.range}, then $u_0\in L^{\hat q}(\R^N)$ for some $\hat q$ in the distinguished range.
\end{rem}
\begin{teo}
\label{thm:local.p.superfujita}
Let $p\in[p_c,\infty)$,  $q\in[1,\ell]$, $\hat q$ in the distinguished range~\eqref{eq:distinguished.range},
and $u_0\in L^q(\mathbb{R}^d)\cap L^{\hat q}(\mathbb{R}^d)$.  If $u$ is an $L^{\hat q}$-solution to~\eqref{eq-problem}, it is also an $L^q$-solution.  Moreover, $u\in C([0,T);L^q(\R^N))$.
\end{teo}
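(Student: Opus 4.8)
The plan is to mimic the structure of the proof of Theorem~\ref{thm:local.p.subcritical}, decomposing $u=\mathcal{L}_{u_0}+\mathcal{N}_u$ as in~\eqref{eq:definition.v.w.signed}. Since $\|Z(\cdot,t)\|_{L^1(\R^N)}=1$ and $u_0\in L^q(\R^N)$, Young's inequality gives $\|\mathcal{L}_{u_0}(\cdot,t)\|_{L^q(\R^N)}\le\|u_0\|_{L^q(\R^N)}$, so by Remark~\ref{rk:initial.data.Lq} the whole theorem reduces to showing that $\mathcal{N}_u\in L^\infty(0,\tau;L^q(\R^N))$ for every $\tau\in(0,T)$, together with $\operatornamewithlimits{ess\,lim}_{t\to0^+}\|\mathcal{N}_u(\cdot,t)\|_{L^q(\R^N)}=0$. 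The difficulty, absent in Theorem~\ref{thm:local.p.subcritical}, is that here $q<\hat q$: a single application of Young's inequality to $\mathcal{N}_u$ controls $|u(\cdot,s)|^{p-1}u(\cdot,s)*Y(\cdot,t-s)$ starting from $|u(\cdot,s)|^p\in L^{\hat q/p}(\R^N)$, and convolution can only \emph{raise} the Lebesgue exponent; thus from $u\in L^{\hat q}$ one cannot in general reach the strictly smaller exponent $q$ in a single step.

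To overcome this I would run a finite \emph{downward} bootstrap on the integrability exponent, keeping $u$ in a whole interval of spaces. By interpolation between $L^q(\R^N)$ and $L^{\hat q}(\R^N)$ we have $u_0\in L^a(\R^N)$ for every $a\in[q,\hat q]$, so $\mathcal{L}_{u_0}$ lies in all these spaces. For the nonlinear part I would prove, by induction on $k$, that $u\in L^\infty(0,\tau;L^a(\R^N))$ for all $a\in[\max\{q,\hat q/p^k\},\hat q]$. In the inductive step one estimates, via~\eqref{eq-estimate Y},
\[
\|\mathcal{N}_u(\cdot,t)\|_{L^a(\R^N)}\le\int_0^t\big\||u(\cdot,s)|^p\big\|_{L^{a'/p}(\R^N)}\|Y(\cdot,t-s)\|_{L^r(\R^N)}\,{\rm d}s\le K\sup_{s\in(0,\tau)}\|u(\cdot,s)\|_{L^{a'}(\R^N)}^p\,\frac{t^{1-\sigma(r)}}{1-\sigma(r)},
\]
where the source exponent $a'$ is already controlled at the previous stage, $a'\ge p$ so that $a'/p\ge1$, and $r$ is fixed by Young's relation $1+\frac1a=\frac p{a'}+\frac1r$. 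The key point is to choose $a'$ so that $r\in[1,p_{\rm c})$, equivalently $\sigma(r)<1$, which is exactly what makes the time integral converge. At the first stage ($a'=\hat q$, targets $a\in[\hat q/p,\hat q]$) the admissibility $r<p_{\rm c}$ reduces precisely to $\hat q>\ell$, which holds because $\hat q$ is in the distinguished range~\eqref{eq:distinguished.range}; at later stages one may simply take $a'=pa$, giving $r=1<p_{\rm c}$, and this lowers the floor of the interval by a factor $p$ while keeping every source exponent $\ge p$ and $\le\hat q$. Since $p>1$, after finitely many stages the floor reaches $q$, yielding $u\in L^\infty(0,\tau;L^q(\R^N))$.

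The same estimate shows $\|\mathcal{N}_u(\cdot,t)\|_{L^q(\R^N)}\le Kt^{1-\sigma(r)}\to0$ as $t\to0^+$, which gives~\eqref{eq:initial.datum.Lq} through Remark~\ref{rk:initial.data.Lq}, so $u$ is indeed an $L^q$-solution with existence time $T$. Finally, the continuity $u\in C([0,T);L^q(\R^N))$ follows by repeating the three-term splitting of $\mathcal{N}_u$ used in the proof of Proposition~\ref{prop:continuity}, now with the exponent $q$ and the $L^a$ bounds just obtained. I expect the main obstacle to be precisely the control of $r$ along the iteration: one must verify at every stage that the Young exponent stays in $[1,p_{\rm c})$, and the whole scheme hinges on the first-stage bound, where the gap $\hat q>\ell$ enters in an essential way.
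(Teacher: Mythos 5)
Your proposal is correct and follows essentially the same route as the paper: a finite downward bootstrap on the Lebesgue exponent via Young's inequality and the kernel bound \eqref{eq-estimate Y}, where the first step from $L^{\hat q}$ hinges on $\hat q>\ell$ (and $\hat q\le \ell p$, which guarantees $r\ge1$) and every subsequent step lowers the exponent by a factor of $p$ using $Y(\cdot,t)\in L^1$. The only difference is bookkeeping — you track whole intervals $[\max\{q,\hat q/p^k\},\hat q]$ anchored at $\hat q$, whereas the paper first descends to $q=\ell$ and then iterates on $[\ell/p^{j+1},\ell/p^{j})$ — but the decomposition, the admissibility check $r\in[1,p_{\rm c})$, and the induction are the same.
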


\begin{proof}
As in the proof of Theorem~\ref{thm:local.p.subcritical}, to see that $u$ is an $L^q$-solution it is enough to check that  $\mathcal{N}_u\in L^\infty(0,\tau;L^{q}(\R^N))$ for all $\tau\in (0,T)$, and~\eqref{eq:initial.data.Nu}.

We first prove the result for $q=\ell$. To this aim we take $r$ such that $1-\frac1r=\frac p{\hat q}-\frac1\ell$. Since $\hat q\in(\ell, \ell p]$,
\[
    0\le 1-\frac1r=\frac p{\hat q}-\frac1\ell< \frac{p-1}{\ell}=\frac{2\beta}N,
\]
or equivalently, $r\in[1,p_{\rm c})$. Hence, estimate \eqref{eq-estimate Y} holds and $\sigma(r)<1$. Then, since $u\in L^\infty(0,\tau;L^{\hat q}(\R^N))$ for all $\tau\in (0,T)$ by hypothesis,
\begin{align*}
 	\|\mathcal{N}_u(\cdot,t)\|_{L^\ell(\R^N)}&\le \int_0^t\||u(\cdot,s)|^p\|_{L^{\frac{\hat q}p}(\R^N)}\|Y(\cdot,t-s)\|_{L^r(\R^N)}\,{\rm d}s\\
 &\le K\sup_{s\in(0,\tau)}\|u(\cdot,s)\|^p_{L^{\hat q}(\R^N)} \int_0^t(t-s)^{-\sigma(r)}\,{\rm d}s\le K\sup_{s\in(0,\tau)}\|u(\cdot,s)\|^p_{L^{\hat q}(\R^N)} \frac{t^{1-\sigma(r)}}{1-\sigma(r)}
\end{align*}
if $t\in(0,\tau)$, which implies both $\mathcal{N}_u\in L^\infty(0,\tau;L^{\ell}(\R^N))$, and~\eqref{eq:initial.data.Nu}.
The proof of the continuity in $L^\ell$ of an $L^{\hat q}$-solution is similar to that of Proposition~\ref{prop:continuity}.

Let now $q\in[\frac\ell p,\ell)$. Since $u_0\in L^{\hat q}(\mathbb{R}^d)\cap L^q(\mathbb{R}^d)$, then $u_0\in L^{\hat q}(\mathbb{R}^d)\cap L^\ell(\mathbb{R}^d)$. Hence, as we have just proved, the $L^{\hat q}$-solution is an $L^\ell$-solution. To prove that it is also an $L^q$-solution we take $r$ such that and $1-\frac1r=\frac p\ell-\frac1 q $. In the range of $q$ that we are considering $r\in[1,p_{\rm c})$, and therefore
$$
\|\mathcal{N}_u(\cdot,t)\|_{L^ q (\R^N)}\le K\sup_{s\in(0,\tau)}\|u(\cdot,s)\|^p_{L^\ell(\R^N)} \frac{t^{1-\sigma(r)}}{1-\sigma(r)}
$$
if $t\in(0,\tau)$, hence the result. The proof of the continuity in $L^q$ of an $L^{\ell}$-solution for $q$ in this range follows easily.

To cover the whole range down to $q=1$ we proceed by induction. Assume that we have already proved the result for $q\in[\frac\ell{p^j},\frac\ell{p^{j-1}})$ for some $j\in\mathbb{N}$. We will prove that then it is also valid for $q\in[\frac\ell{p^{j+1}},\frac\ell{p^{j}})$. Notice that, by hypothesis, $p\ge p_c>1$. Hence, $\frac{\ell}{p^j}\to0$ as $j\to\infty$, which means that we will reach $q=1$ in a finite number of steps.

Let $q\in[\frac\ell{p^{j+1}},\frac\ell{p^{j}})$.  Then $qp\in[\frac\ell{p^j},\frac\ell{p^{j-1}})$. Since $u_0\in L^{\hat q}(\mathbb{R}^d)\cap L^q(\mathbb{R}^d)$, then $u_0\in L^{\hat q}(\mathbb{R}^d)\cap L^{qp}(\mathbb{R}^d)$,  and, by the induction hypothesis, the $L^{\hat q}$-solution is an $L^{qp}$-solution. Therefore, using~\eqref{eq-estimate Y},
\begin{align*}
 	\|\mathcal{N}_u(\cdot,t)\|_{L^q(\R^N)}&\le \int_0^t\||u(\cdot,s)|^p\|_{L^q(\R^N)}\|Y(\cdot,t-s)\|_{L^1(\R^N)}\,{\rm d}s\\
 &\le K\sup_{s\in(0,\tau)}\|u(\cdot,s)\|^p_{L^{qp}(\R^N)} \int_0^t(t-s)^{-1+\alpha}\,{\rm d}s\le K\sup_{s\in(0,\tau)}\|u(\cdot,s)\|^p_{L^{qp}(\R^N)} \frac{t^\alpha}{\alpha}
\end{align*}
if $t\in(0,\tau)$, hence the result. The proof of the continuity in $L^q$ of an $L^{qp}$-solution is similar to that of Proposition~\ref{prop:continuity}.
\end{proof}

\section{Global existence above the Fujita exponent}\label{sect-global existence}
\setcounter{equation}{0}

If $p\in[p_f,\infty)$, which implies that $\ell\in[1,\infty)$, we will construct global $L^q$-solutions, for certain initial data, for all $q\in[1,\infty]$.

\subsection{The distinguished range}

We start by considering values of $q$ in the distinguished range~\eqref{eq:distinguished.range}.
\subsubsection{The strategy}
If $q$ satisfies~\eqref{eq:distinguished.range}, it is in the good range~\eqref{eq:good.range}. Hence, given $u_0\in L^q(\mathbb{R}^N)$, there is a (unique) maximal $L^q$-solution with initial datum $u_0$, with a maximal existence time $T\in(0,+\infty]$. Moreover, if $T<+\infty$,  then  $\limsup_{t\nearrow T}\|u(\cdot,t)\|_{L^q(\R^N)}=\infty$. Hence, to prove that an $L^q$-solution is global in time, it is enough to prove that $\|u(\cdot,t)\|_{L^q(\R^N)}$ is bounded by a constant that does not depend on $t$ up to time $T$. This will guaranteed if we are able to obtain such a bound for the function
\begin{equation}
\label{eq:definition.f}
f:(0,T)\to[0,\infty),\qquad f(t):=\sup_{s\in(0,t)}\|s^b u(\cdot, s)\|_{L^{ q }(\R^N)},\qquad b:=\frac{N\alpha}{2\beta}\Big(\frac1\ell-\frac1{ q }\Big)=\frac\alpha{p-1}-\frac{N\alpha}{2\beta  q },
\end{equation}
since $b>0$ when $q>\ell$. We will show that there is some value $\eta>0$ such that if
\begin{equation}
\label{eq:eta.condition}
t^b\|\mathcal{L}_{u_0}(\cdot,t)\|_{L^{ q }(\R^N)}\le \eta\quad\text{for every }t>0,
\end{equation}
then the function $f$ will never reach the value $2\eta$. The goal will be then to obtain conditions on the initial data ensuring that~\eqref{eq:eta.condition} holds.

\subsubsection{The basic lemma}
Following the above strategy, we first prove that~\eqref{eq:eta.condition} implies the desired bound for $f$.
\begin{lema}\label{lem:key.lemma}
Let $p\in[p_f,\infty)$ and $q$ satisfying~\eqref{eq:distinguished.range}. There is a value $\eta>0$ depending only on $p,q,\alpha,\beta$ and $N$ such that if $u$ is a maximal $L^q$-solution to~\eqref{eq-problem} with initial datum $u_0\in L^q(\R^N)$ satisfying~\eqref{eq:eta.condition}, and $f$ is as in~\eqref{eq:definition.f}, then $f(t)\le 2\eta$ for all $t\in(0,T)$. As a corollary, $u$ is global in time.
\end{lema}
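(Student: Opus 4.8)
The plan is to run a continuation (bootstrap) argument on the function $f$ defined in~\eqref{eq:definition.f}. By Proposition~\ref{prop:continuity} the maximal $L^q$-solution lies in $C([0,T);L^q(\R^N))$, so $t\mapsto \|t^b u(\cdot,t)\|_{L^q(\R^N)}$ is continuous on $(0,T)$, and since $b>0$ and $\|u(\cdot,t)\|_{L^q}\to\|u_0\|_{L^q}$ as $t\to0^+$, the product $t^b\|u(\cdot,t)\|_{L^q}\to0$ as $t\to0^+$. Hence $f$ is continuous and nondecreasing with $f(0^+)=0$. The strategy is to show that the set $\{t\in(0,T): f(t)<2\eta\}$ is both open and closed (relative to an interval where $f<2\eta$ could be violated), so that if $f$ ever tried to reach $2\eta$ it would already have to exceed it strictly — a contradiction. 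Concretely, I will prove the a priori implication: \emph{if} $f(t)\le 2\eta$ on some subinterval, \emph{then} in fact $f(t)\le \eta + C\eta^p$ there, and choosing $\eta$ small makes $\eta+C\eta^p<2\eta$, closing the bootstrap.

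The core estimate comes from applying the weighted norm to the Duhamel decomposition~\eqref{eq:definition.v.w.signed}. For the linear part, hypothesis~\eqref{eq:eta.condition} gives directly $t^b\|\mathcal{L}_{u_0}(\cdot,t)\|_{L^q}\le\eta$. For the nonlinear part I would estimate, using Young's inequality with the exponent $r$ satisfying $1-\frac1r=\frac pq-\frac1q$ (which lies in $[1,p_{\rm c})$ in the good range, so~\eqref{eq-estimate Y} applies with $\sigma(r)<1$),
\begin{align*}
t^b\|\mathcal{N}_u(\cdot,t)\|_{L^q(\R^N)}
&\le t^b\int_0^t\big\||u(\cdot,s)|^p\big\|_{L^{q/p}(\R^N)}\|Y(\cdot,t-s)\|_{L^r(\R^N)}\,{\rm d}s\\
&\le Kt^b\int_0^t\|u(\cdot,s)\|^p_{L^q(\R^N)}(t-s)^{-\sigma(r)}\,{\rm d}s.
\end{align*}
Then I substitute $\|u(\cdot,s)\|_{L^q}\le s^{-b}f(s)\le s^{-b}f(t)$ using the monotonicity of $f$, obtaining a bound of the form $Kf(t)^p\, t^b\int_0^t s^{-bp}(t-s)^{-\sigma(r)}\,{\rm d}s$. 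The pivotal point is that the exponents conspire so that this time integral is a Beta-function integral producing exactly the power $t^{-b}$, making the whole expression scale-invariant: one checks $b+1-\sigma(r)-bp=0$, equivalently $bp+\sigma(r)-b=1$, using $b=\frac{N\alpha}{2\beta}(\frac1\ell-\frac1q)$ and $\sigma(r)=1-\alpha+\frac{N\alpha}{2\beta}(\frac pq-\frac1q)$ together with $\frac1\ell=\frac{p-1}{N\alpha/(2\beta)}\cdot\frac1\alpha$; the $\alpha$'s and the $q$-dependence cancel precisely. The convergence of the Beta integral requires $bp<1$ and $\sigma(r)<1$: the latter holds in the good range, and the former is exactly where the distinguished-range hypothesis $q\le \ell p$ enters, since $bp<1\iff q\le \ell p$.

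Combining the two contributions yields $f(t)\le \eta + K B\,f(t)^p$ for all $t$ in the relevant interval, where $B$ is the finite Beta integral depending only on $p,q,\alpha,\beta,N$. Now I would fix $\eta$ small enough that $KB(2\eta)^{p}\le \eta$, i.e. $\eta\le (2^{-p}/(KB))^{1/(p-1)}$; then on any interval where $f\le 2\eta$ one gets $f(t)\le \eta + KB(2\eta)^p \le 2\eta$ with strict inequality $f(t)\le\eta+\eta/2<2\eta$ unless $f\equiv0$. Since $f$ is continuous with $f(0^+)=0$, a standard connectedness/continuity argument then forbids $f$ from ever attaining the value $2\eta$ on $(0,T)$, giving $f(t)\le 2\eta$ throughout. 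Finally, $f(t)\le 2\eta$ means $\|u(\cdot,t)\|_{L^q}\le 2\eta\, t^{-b}$, which is bounded on any interval bounded away from $0$; combined with the continuity up to $t=0$ (where the norm tends to $\|u_0\|_{L^q}$), this shows $\sup_{t\in(0,T)}\|u(\cdot,t)\|_{L^q}<\infty$, so $T=\infty$ by Corollary~\ref{coro-blow up}. The main obstacle, and the step deserving the most care, is verifying the exact exponent identity $bp+\sigma(r)=1+b$ that makes the estimate self-improving rather than merely bounded; getting this scaling wrong would break the whole bootstrap, and it is precisely this algebraic balance — rather than any hard analysis — that forces both the Fujita threshold $p\ge p_f$ (so that $\ell\ge1$ and $b\ge0$) and the upper endpoint $q\le\ell p$ of the distinguished range.
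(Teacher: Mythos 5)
Your proposal is correct and follows essentially the same route as the paper: the same choice of $r$ with $1-\frac1r=\frac{p-1}{q}$, the same weighted Duhamel estimate reducing to the Beta integral via the identity $bp+\sigma(r)=1+b$, and the same continuity/first-crossing argument showing $f$ can never reach $2\eta$. The only nitpick is that $bp<1$ is \emph{implied by} $q\le\ell p$ (which actually gives the stronger bound $bp\le\alpha<1$) rather than being equivalent to it, but this does not affect the argument.
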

\begin{proof}	
We borrow ideas from the paper~\cite{Weissler-1981}, which deals with the local case $\alpha,\beta=1$. But we have to face the extra difficulty stemming from the singularity of the kernels $Z$ and $Y$ at $x=0$ that appears when $\alpha\in(0,1)$.

Let $1-\frac1r=\frac p q -\frac1 q =\frac{2\beta}N\frac\ell q$. Since $p\ge p_f>1$ and $q>\ell$, then $1-\frac1r\in(0,\frac{2\beta}N)$, so that $r\in (1,p_{\rm c})$. Hence, we may use the bound~\eqref{eq-estimate Y} and the hypothesis~\eqref{eq:eta.condition} to obtain
\[\begin{aligned}
	t^b\|u(\cdot,t)\|_{L^{ q }(\R^N)}&\le t^b\|\mathcal{L}_{u_0}(\cdot,t)\|_{L^{ q }(\R^N)}+K t^b \int_0^t\|u(\cdot,s)\|_{L^{ q }(\R^N)}^p(t-s)^{\alpha-1-\frac{N\alpha}{2\beta}(1-\frac1r)} \,{\rm d}s\\
	&\le \eta+K t^{b-a} \int_0^t\|s^bu(\cdot,s)\|_{L^{ q }(\R^N)}^p s^{-bp}\Big(1-\frac st\Big)^{-a}\,{\rm d}s,
	 \end{aligned}\]
with
$$
a:= 1-\alpha+\frac{N\alpha }{2\beta  q }(p-1)=1-\alpha+\alpha\frac\ell  q.
$$
Notice that $a\in(0,1)$, since $\alpha\in(0,1)$ and $q>\ell$.
After a change of variables we  arrive at
\begin{equation}
\label{eq:inequality.f}
f(t)\le \eta+Kf(t)^p \int_0^1 \tau^{-bp}(1-\tau)^{-a}\,{\rm d}\tau,
\end{equation}
where we have used that $b-a+1-bp=0$, an identity that can be easily checked. Remember that $b>0$, because $q>\ell$. Moreover, since $q \le\ell p$,
$$
bp=\frac{N\alpha}{2\beta}\Big(\frac p\ell-\frac p{ q }\Big)\le \frac{N\alpha(p-1)}{2\beta\ell}
= \alpha<1.
$$
Therefore, the integral on the second term in the right-hand side of~\eqref{eq:inequality.f} is bounded, and we have
\begin{equation*}
\label{eq:inequality.global.bound}
f(t)\le \eta+K_0 f(t)^p
\end{equation*}
with $K_0$ a universal constant.

We now notice that $f(t)\to0$ as $t\to0^+$, since $\|u(\cdot,t)\|_{L^q(\R^N)}$ is bounded in $[0,\tau]$ for any $\tau\in(0,T)$. On the other hand, $f$ is nondecreasing. Hence, if $K_02^p\eta^{p-1}<1$ and $t^b\|\mathcal{L}_{u_0}(\cdot,t)\|_{L^{ q }(\R^N)}\le \eta$ for every $t>0$, the function $f(t)$ can never reach the value $2\eta$. Indeed, if $t_1$ is such that $f(t_1)=2\eta$, then
\[
f(t_1)\le \eta+K_0(2\eta)^{p-1}f(t_1)=\frac12f(t_1)\big(1+K_02^p\eta^{p-1}\big)<f(t_1)\]
a contradiction, and  the theorem is proved.
\end{proof}

\begin{rem}
The global $L^q$-solutions provided by Lemma~\ref{lem:key.lemma} satisfy $\|u(\cdot,t)\|_{L^q(\mathbb{R}^N)}\le 2\eta t^{-b}$. Hence they decay to 0 in $L^q(\mathbb{R}^N)$ as $t\to\infty$.
\end{rem}

\subsubsection{A strong condition for global existence}

Our first condition ensuring~\eqref{eq:eta.condition}, and hence global existence, is of the same type as the one obtained in~\cite{Weissler-1981} for the local case $\alpha=1$, $\beta=1$.
\begin{teo}\label{teo-global existence 2} Let $p\in[p_f,\infty)$ and $q$ satisfying~\eqref{eq:distinguished.range}. There exists a value $\eta_0>0$ depending only on $p,q,\alpha,\beta$ and $N$ such that if $u_0\in L^\ell(\R^N)\cap L^q(\R^N)$ and
\begin{equation}
\label{eq:strong.condition}
\|u_0\|_{L^\ell(\R^N)}\le\eta_0,
\end{equation}
then the maximal $L^q$-solution to~\eqref{eq-problem} with initial datum $u_0$ is global in time.
\end{teo}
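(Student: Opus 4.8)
The plan is to reduce everything to the basic lemma, Lemma~\ref{lem:key.lemma}. Since $q$ satisfies~\eqref{eq:distinguished.range} it lies in the good range~\eqref{eq:good.range}, so Section~\ref{sect-local existence} already provides a unique maximal $L^q$-solution $u$ with initial datum $u_0$ and some maximal existence time $T$. By Lemma~\ref{lem:key.lemma}, with $\eta>0$ the constant produced there, it suffices to verify that the smallness hypothesis~\eqref{eq:eta.condition} holds, namely that $t^b\|\mathcal{L}_{u_0}(\cdot,t)\|_{L^q(\R^N)}\le\eta$ for all $t>0$, where $b$ is as in~\eqref{eq:definition.f}. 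All the real work (the integral inequality for $f$ and the continuation argument) is hidden inside that lemma; here I only need to convert the smallness of $\|u_0\|_{L^\ell(\R^N)}$ into~\eqref{eq:eta.condition}.

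First I would recall that $\mathcal{L}_{u_0}(\cdot,t)=u_0*Z(\cdot,t)$ and apply Young's convolution inequality. Choosing $r$ by $1-\frac1r=\frac1\ell-\frac1q$ gives
\[
\|\mathcal{L}_{u_0}(\cdot,t)\|_{L^q(\R^N)}\le\|u_0\|_{L^\ell(\R^N)}\,\|Z(\cdot,t)\|_{L^r(\R^N)}.
\]
The one point that genuinely must be checked is that this $r$ lies in $[1,p_{\rm c})$, so that the estimate~\eqref{eq-estimate Z} on $\|Z(\cdot,t)\|_{L^r(\R^N)}$ is available. The lower bound $r\ge1$ (in fact $r>1$) follows from $q>\ell$. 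For the upper bound I would use the \emph{right} endpoint of the distinguished range in an essential way: since $q\le\ell p$,
\[
1-\frac1r=\frac1\ell-\frac1q\le\frac1\ell-\frac1{\ell p}=\frac{1}{\ell}\cdot\frac{p-1}{p}=\frac{2\beta}{Np}<\frac{2\beta}N,
\]
which is exactly the condition $r<p_{\rm c}$. This is the only inequality in the argument that requires care, and it is what forces us to work inside~\eqref{eq:distinguished.range} rather than the whole good range.

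With $r\in[1,p_{\rm c})$ secured, estimate~\eqref{eq-estimate Z} yields $\|Z(\cdot,t)\|_{L^r(\R^N)}\le Kt^{-\frac{N\alpha}{2\beta}(1-\frac1r)}$, and the exponent matches $b$ precisely because $\frac{N\alpha}{2\beta}(1-\frac1r)=\frac{N\alpha}{2\beta}(\frac1\ell-\frac1q)=b$. Combining the two displays gives
\[
t^b\|\mathcal{L}_{u_0}(\cdot,t)\|_{L^q(\R^N)}\le K\|u_0\|_{L^\ell(\R^N)}\qquad\text{for all }t>0.
\]
Thus setting $\eta_0:=\eta/K$ (which depends only on $p,q,\alpha,\beta,N$, as the constants $K$ and $\eta$ do), the assumption~\eqref{eq:strong.condition} immediately gives $t^b\|\mathcal{L}_{u_0}(\cdot,t)\|_{L^q(\R^N)}\le\eta$, i.e.~\eqref{eq:eta.condition}. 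Lemma~\ref{lem:key.lemma} then applies and shows that $u$ is global in time, completing the proof. There is no serious obstacle beyond the range check for $r$; the scaling in $t$ works out automatically because $b$ was defined precisely as the decay exponent of $\|\mathcal{L}_{u_0}(\cdot,t)\|_{L^q(\R^N)}$ coming from the $L^\ell$--$L^q$ smoothing of the linear kernel $Z$.
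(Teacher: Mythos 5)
Your proposal is correct and follows essentially the same route as the paper: reduce to Lemma~\ref{lem:key.lemma} and verify~\eqref{eq:eta.condition} via Young's inequality with $1-\frac1r=\frac1\ell-\frac1q$, the bound~\eqref{eq-estimate Z}, and the identity $\frac{N\alpha}{2\beta}(1-\frac1r)=b$, taking $\eta_0=\eta/K$. Your explicit check that $q\le\ell p$ is what guarantees $r<p_{\rm c}$ is a point the paper leaves implicit, and it is correctly carried out.
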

\begin{proof}
Let $r$ such that $1-\frac1r=\frac1\ell-\frac1q$. The hypotheses on $q$ imply that
$0<1-\frac1r<\frac{2\beta}N$,
or equivalently, $r\in(1,p_{\rm c})$. Hence, we may use the bound~\eqref{eq-estimate Z} to obtain
\begin{align*}
t^b\|\mathcal{L}_{u_0}(\cdot,t)\|_{L^q(\R^N)}&\le t^b\|u_0\|_{L^\ell(\R^N)}\|Z(\cdot,t)\|_{L^r(\R^N)}
\le Kt^b\|u_0\|_{L^\ell(\R^N)}t^{-\frac{N\alpha}{2\beta}\big(1-\frac1r\big)}\\
&=
Kt^b\|u_0\|_{L^\ell(\R^N)}t^{-\frac{N\alpha}{2\beta}\big(1-\frac1r\big)}\le K\eta_0.
\end{align*}
The result now follows from Lemma~\ref{lem:key.lemma}, taking $\eta_0=\eta/K$.
\end{proof}

\subsubsection{A weaker (\lq\lq sharp'') condition for global existence}

Our second condition on the initial datum ensuring~\eqref{eq:eta.condition}, and hence global existence, is of a different, weaker, nature. We will obtain in Section~\ref{sect-blow up} a condition for blow-up in the same spirit, which shows that condition~\eqref{eq:eta.condition} is \lq\lq sharp''. We may  assume without loss of generality that $p>p_f$, since when $p=p_f$ both conditions for global existence are in fact the same.

\begin{teo}\label{teo-global existence}
Let $p\in(p_f,\infty)$, $q$ satisfying~\eqref{eq:distinguished.range}, and $u_0\in L^\ell(\R^N)\cap L^q(\R^N)$.  Let $\eta>0$ as in Lemma~\ref{lem:key.lemma} and $R_0=(\eta/\|u_0\|_{L^q(\R^N)})^{1/b}$. There is a constant $\delta>0$ depending only on $\eta$  such that if $u_0\ge 0$ and
\begin{equation}\label{eq-condition}
    \sup_{R>R_0}R^{\frac\alpha{p-1}-\frac{N\alpha}{2\beta}}\int_{|y|<R^{\frac\alpha{2\beta}}}u_0(y)\,{\rm d}y<\delta,
\end{equation}
then the maximal $L^q$-solution to~\eqref{eq-problem} exists globally in time. Moreover, it exists globally in $L^\ell(\R^N)$.
\end{teo}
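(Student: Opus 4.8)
The plan is to verify the single hypothesis~\eqref{eq:eta.condition} of Lemma~\ref{lem:key.lemma}, namely $t^b\|\mathcal{L}_{u_0}(\cdot,t)\|_{L^q(\R^N)}\le\eta$ for all $t>0$; the conclusion (global existence, with $\|u(\cdot,t)\|_{L^q(\R^N)}\le 2\eta t^{-b}$) then follows from that lemma. The first move is to rewrite the assumption~\eqref{eq-condition} as a growth bound on the ball averages $U_0(\rho):=\int_{|y|<\rho}u_0(y)\,{\rm d}y$. Substituting $\rho=R^{\frac\a{2\beta}}$ and setting $m:=N-\frac{2\beta}{p-1}$, condition~\eqref{eq-condition} becomes \[U_0(\rho)<\delta\,\rho^{m}\qquad\text{for all }\rho>\rho_0:=R_0^{\frac\a{2\beta}}.\] Note that $m\in(0,N)$ \emph{precisely} because $p>p_f$, which is what will make the forthcoming self-similar integrals converge.

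I would then split the verification of~\eqref{eq:eta.condition} into two time regimes, the threshold being dictated by the definition $R_0=(\eta/\|u_0\|_{L^q(\R^N)})^{1/b}$. For $0<t\le R_0$ nothing beyond $u_0\in L^q$ is needed: since $\|Z(\cdot,t)\|_{L^1(\R^N)}=1$, Young's inequality gives $\|\mathcal{L}_{u_0}(\cdot,t)\|_{L^q(\R^N)}\le\|u_0\|_{L^q(\R^N)}$, whence \[t^b\|\mathcal{L}_{u_0}(\cdot,t)\|_{L^q(\R^N)}\le t^b\|u_0\|_{L^q(\R^N)}\le R_0^{\,b}\|u_0\|_{L^q(\R^N)}=\eta,\] using $b>0$. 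The whole force of~\eqref{eq-condition} is thus reserved for the regime $t>R_0$, where the natural length scale $s:=t^{\frac\a{2\beta}}$ exceeds $\rho_0$, so that the balls on which the averages of $u_0$ are sampled fall in the range where $U_0(\rho)<\delta\rho^m$ applies.

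For $t>R_0$ the goal is the decay estimate $t^b\|\mathcal{L}_{u_0}(\cdot,t)\|_{L^q(\R^N)}\le C\delta$ with $C=C(N,\a,\beta,p,q)$, which I would obtain from a \emph{self-similar pointwise} bound on $\mathcal{L}_{u_0}$. Since $u_0\ge0$ and, by~\eqref{eq:pointwise.estimate.Z}, $Z(\cdot,t)$ is dominated by a radially decreasing envelope $g_s$ of scale $s$, the super-level-set (layer-cake) representation yields \[\mathcal{L}_{u_0}(x,t)\le K\int_0^\infty U_x(\rho)\,{\rm d}\mu_s(\rho),\qquad U_x(\rho):=\int_{|x-y|<\rho}u_0(y)\,{\rm d}y,\] where ${\rm d}\mu_s=-{\rm d}g_s$ is a positive measure. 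Bounding off-center averages by centered ones, $U_x(\rho)\le U_0(|x|+\rho)$, inserting $U_0(\rho)\le\delta\rho^m$ for the relevant large radii, and using the $x$-centered Hölder bound $U_x(\rho)\le\omega_N^{1-1/q}\|u_0\|_{L^q(\R^N)}\rho^{N(1-1/q)}$ for the small radii $y\approx x$, I expect to reach \[\mathcal{L}_{u_0}(x,t)\le C\delta\,t^{-\frac\a{p-1}}\Phi\!\big(x\,t^{-\frac\a{2\beta}}\big),\] with $\Phi$ a fixed profile decaying like $|\xi|^{-\frac{2\beta}{p-1}}$ at infinity. Taking $L^q$ norms via the self-similar change of variables gives $\|\mathcal{L}_{u_0}(\cdot,t)\|_{L^q(\R^N)}\le C\delta\,t^{-b}\|\Phi\|_{L^q(\R^N)}$, and $\Phi\in L^q(\R^N)$ is guaranteed by the condition $q>\ell$ built into~\eqref{eq:distinguished.range} (integrability at infinity), the same exponent that drove the fixed-point argument in Section~\ref{sect-local existence}. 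It then suffices to choose $\delta$ small, depending only on $\eta$, so that $C\delta\|\Phi\|_{L^q(\R^N)}\le\eta$, and~\eqref{eq:eta.condition} holds for all $t>0$.

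The hard part is this pointwise bound for $t>R_0$: the averages controlled by~\eqref{eq-condition} are centered at the origin, whereas $\mathcal{L}_{u_0}(x,t)$ must be estimated at \emph{every} $x$, so one must trade centered for off-center averages while simultaneously controlling the region $y\approx x$, where $Z$ is singular, by the $L^q$ information on $u_0$ rather than by~\eqref{eq-condition}. Checking that these $\|u_0\|_{L^q}$-dependent contributions are genuinely subordinate when $t>R_0$ (this is exactly where the value of $R_0$ enters) and that the origin singularity of $\Phi$, which appears when $p<2$, remains $L^q$-integrable, is the delicate bookkeeping; the far-field decay $Z(z,t)\lesssim t^\a|z|^{-(2\beta+N)}$ together with $m<N$ is what renders the tail integrable. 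Finally, for the \emph{moreover} statement I would argue that, by Theorem~\ref{thm:local.p.superfujita}, this global $L^q$-solution is also an $L^\ell$-solution, and that the decay $\|u(\cdot,t)\|_{L^q(\R^N)}\le2\eta t^{-b}$ fed into the decomposition~\eqref{eq:definition.v.w.signed}, together with $\|\mathcal{L}_{u_0}(\cdot,t)\|_{L^\ell(\R^N)}\le\|u_0\|_{L^\ell(\R^N)}$ and the estimate~\eqref{eq-estimate Y} for $\mathcal{N}_u$, yields a bound on $\|u(\cdot,t)\|_{L^\ell(\R^N)}$ uniform in $t$, so that the solution is global in $L^\ell(\R^N)$.
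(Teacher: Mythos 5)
Your overall strategy is the paper's: verify \eqref{eq:eta.condition} and invoke Lemma~\ref{lem:key.lemma}, with the regime $t\le R_0$ disposed of by $\|Z(\cdot,t)\|_{L^1(\R^N)}=1$ and $t^b\|u_0\|_{L^q(\R^N)}\le R_0^b\|u_0\|_{L^q(\R^N)}=\eta$; that part is correct, as is your treatment of the \emph{moreover} statement via Theorem~\ref{thm:local.p.superfujita}. The core estimate for $t>R_0$, however, has two genuine gaps, both traceable to the fact that your sketch never uses the hypothesis $u_0\in L^\ell(\R^N)$ in that regime, whereas the paper uses it twice and essentially.

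First, the near-diagonal contribution. You control the radii with $y\approx x$ \emph{pointwise} by $U_x(\rho)\le\omega_N^{1-1/q}\|u_0\|_{L^q(\R^N)}\rho^{N(1-1/q)}$ and claim the outcome is subordinate for $t>R_0$. This fails on two counts: (a) when $q\le N/(2\beta)$ --- which happens for \emph{all} $q$ in the distinguished range once $p$ is close to $p_f$, since then $\ell p$ is close to $p_f<N/(2\beta)$ --- the layer-cake integral $\int_0\rho^{N(1-1/q)}\rho^{2\beta-N-1}\,{\rm d}\rho$ diverges at $0$ (equivalently, $Z(\cdot,t)\notin L^{q'}_{\rm loc}$), so no pointwise bound via the $L^q$ norm exists; (b) even when it converges, this piece contributes to $t^b\|\mathcal{L}_{u_0}(\cdot,t)\|_{L^q(\R^N)}$ a term of order $t^b\lambda^{2\beta}\|u_0\|_{L^q(\R^N)}$, which \emph{grows} as $t\to\infty$; far from being subordinate beyond $R_0$, these terms become dominant. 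The paper instead estimates this piece in $L^q$ (not pointwise) by Young's inequality as $\|u_0\|_{L^\ell(\R^N)}\|Z(\cdot,t)\|_{L^r(|z|<\lambda t^{\a/(2\beta)})}$ with $1-\frac1r=\frac1\ell-\frac1q$, which yields exactly the decay $t^{-b}$ with prefactor $\lambda^{2\beta-N(1-1/r)}$, made small by taking $\lambda$ small depending on $\|u_0\|_{L^\ell(\R^N)}$. Second, the far field. The trade $U_x(\rho)\le U_0(|x|+\rho)\le\delta(|x|+\rho)^m$ is too lossy when $|x|\gg t^{\a/(2\beta)}$: after rescaling it produces a profile growing like $|\xi|^m$ at infinity (recall $m>0$), which is not in $L^q(\R^N)$, so your claimed $\Phi$ with decay $|\xi|^{-2\beta/(p-1)}$ does not come out of these steps. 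Concretely, $u_0=M\chi_{B_1(x_0)}$ with $|x_0|$ huge and $M\sim\delta|x_0|^m$ satisfies \eqref{eq-condition} yet defeats any bound built from centered averages alone. The paper controls the region $|y|>\Lambda t^{\a/(2\beta)}$ by the absolute continuity of the $L^\ell$ norm, i.e.\ $\|u_0\|_{L^\ell(|y|>\Lambda R_0^{\a/(2\beta)})}$ small for $\Lambda$ large (depending on $u_0$), and reserves \eqref{eq-condition} for the single remaining piece with $|y|<\Lambda t^{\a/(2\beta)}$ and $|x-y|>\lambda t^{\a/(2\beta)}$, where it is paired with $\|Z(\cdot,t)\|_{L^q(|z|\ge\lambda t^{\a/(2\beta)})}$. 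Without these two uses of the $L^\ell$ hypothesis your argument does not close.
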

\begin{proof}
Since $\|Z(\cdot,t)\|_{L^1(\mathbb{R}^N}=1$, then $\|\mathcal{L}_{u_0}(\cdot,t)\|_{L^q(\R^N)}\le \|u_0\|_{L^q(\R^N)}$. As $b>0$ when $q>\ell$, then
$$
t^b\|\mathcal{L}_{u_0}(\cdot,t)\|_{L^{ q }(\R^N)}\le R_0^b\|u_0\|_{L^q(\R^N)}=\eta\quad \text{for }t\in(0,R_0].
$$
It remains then to show that~\eqref{eq:eta.condition} holds for $t\ge R_0$.

Let $\Lambda>0$ to be chosen later. We make the decomposition $\mathcal{L}_{u_0}=\mathcal{L}_1+\mathcal{L}_2$, where
$$
\mathcal{L}_1(x,t)=\int_{|y|>\Lambda t^{\frac\alpha{2\beta}}}u_0(y)Z(x-y,t)\,{\rm d}y,\qquad
\mathcal{L}_2(x,t)=\int_{|y|<\Lambda t^{\frac\alpha{2\beta}}}u_0(y)Z(x-y,t)\,{\rm d}y.
$$
Let $r$ be such that $1-\frac1r=\frac1\ell-\frac1q$. The hypotheses on $q$ imply that
\begin{equation*}
\label{eq:condicion.q}
0\le1-\frac1r=\frac{2\beta}{N(p-1)}-\frac1q<\frac{2\beta}N,
\end{equation*}
or equivalently, $r\in[1,p_{\rm c})$. Hence, we may use the bound~\eqref{eq-estimate Z} to obtain, for all $t\ge R_0>0$,
\[\begin{aligned}
	\|\mathcal{L}_1(\cdot,t)\|_{L^{ q }(\R^N)}&\le \|u_0\|_{L^\ell(|y|>\Lambda t^{\frac\alpha{2\beta}})}\|Z(\cdot,t)\|_{L^r(\R^N)}\le K\|u_0\|_{L^\ell(|y|>\Lambda R_0^{\frac\alpha{2\beta}})}\, t^{-\frac{N\alpha}{2\beta}(1-\frac1r)}\le \frac\eta2 t^{-b}
\end{aligned}\]	
if $\Lambda\ge1$ is large enough, since $\frac{N\alpha}{2\beta}(1-\frac1r)=b$.

In order to estimate $\mathcal{L}_2$ we make the decomposition $\mathcal{L}_2=\mathcal{L}_{21}+\mathcal{L}_{22}$, where
\begin{align*}
	\mathcal{L}_{21}(x,t)&=\int_{\scriptsize\begin{array}{c} |y|<\Lambda t^{\frac\alpha{2\beta}}\\
|x-y|<\lambda t^{\frac\alpha{2\beta}}\end{array}}u_0(y)Z(x-y,t)\,{\rm d}y,\\[6pt]
\mathcal{L}_{22}(x,t)&=\int_{\scriptsize\begin{array}{c} |y|<\Lambda t^{\frac\alpha{2\beta}}\\
|x-y|>\lambda t^{\frac\alpha{2\beta}}\end{array}}u_0(y)Z(x-y,t)\,{\rm d}y,
\end{align*}
for some $\lambda \in(0,1)$ small to be further specified later. We take $r$ as above and use the bound~\eqref{eq:pointwise.estimate.Z} to obtain
    \[\begin{aligned}
        \|\mathcal{L}_{21}(\cdot,t)\|_{L^q(\R^N)}&\le  \|u_0\|_{L^\ell(|y|<\Lambda t^{\frac\alpha{2\beta}})}\|Z(\cdot,t)\|_{L^r(|z|<\lambda t^{\frac\alpha{2\beta}})}\le \|u_0\|_{L^\ell(\R^N)}\tilde K \lambda^{2\beta-N(1-\frac1r)}t^{-\frac{N\alpha}{2\beta}(1-\frac1r)}\le \frac{\eta}4t^{-b}
    \end{aligned}\]
if $\lambda$ is small enough, how small depending on $\|u_0\|_{L^\ell(\R^N)}$,
because $2\beta-N(1-\frac1r)>0$.

From now on we fix $\lambda$. We have
\[
	\|\mathcal{L}_{22}(\cdot,t)\|_{L^q(\R^N)}\le \int_{|y|<\Lambda t^{\frac\alpha{2\beta}}}u_0(y)\,{\rm d}y \,\|Z(\cdot,t)\|_{L^{ q }(  |z|\ge \lambda t^{\frac\alpha{2\beta}})}.
\]
The bound~\eqref{eq:pointwise.estimate.Z} for the kernel $Z$ yields
$$
\|Z(\cdot,t)\|_{L^{ q }( |z|\ge \lambda t^{\frac\alpha{2\beta}})}<\widehat K \lambda^{-2\beta-N\big(1-\frac 1q\big)}t^{-\frac{N\alpha}{2\beta}\big(1-\frac1q\big)}
$$
for all $t>0$ for some constant $\widehat K$ depending only on $\lambda$, $\alpha$, $\beta$ and $N$. Therefore, for all $t\ge R_0$,
\[\begin{aligned}
t^b \|\mathcal{L}_{22}(\cdot,t)\|_{L^q(\R^N)}
&\le  \widehat K \lambda^{-2\beta-N\big(1-\frac 1q\big)}t^{\frac\alpha{p-1}-\frac{N\alpha}{2\beta}}\int_{|y|<\Lambda t^{\frac\alpha{2\beta}}}u_0(y)\,{\rm d}y\\
&=  \widehat K \lambda^{-2\beta-N\big(1-\frac 1q\big)}\Lambda^{\frac{2\beta}{p-1}-N}(\Lambda^{\frac{2\beta}\alpha}t)^{\frac\alpha{p-1}-\frac{N\alpha}{2\beta}}\int_{|y|<\Lambda t^{\frac\alpha{2\beta}}}u_0(y)\,{\rm d}y\\
&\le  \widehat K \lambda^{-2\beta-N\big(1-\frac 1q\big)}\Lambda^{\frac{2\beta}{p-1}-N}\sup_{R>\Lambda^{\frac{2\beta}\alpha}R_0}R^{\frac\alpha{p-1}-\frac{N\alpha}{2\beta}}\int_{|y|<R^{\frac\alpha{2\beta}}}u_0(y)\,{\rm d}y\\
&\le \sup_{R>R_0}R^{\frac\alpha{p-1}-\frac{N\alpha}{2\beta}}\int_{|y|<R^{\frac\alpha{2\beta}}}u_0(y)\,{\rm d}y
\end{aligned} \]
if we choose $\Lambda\ge1$ so large that $\widehat K \lambda^{-2\beta-N\big(1-\frac 1q\big)}\Lambda^{\frac{2\beta}{p-1}-N}\le 1$, as $\frac{2\beta}{p-1}-N<0$.
Hence,
\[t^b \|\mathcal{L}_{22}(\cdot,t)\|_{L^q(\R^N)}\le \frac\eta4\]
if~\eqref{eq-condition} holds  with $\delta=\frac\eta4$.

Global existence in $L^q$ now follows from Lemma~\ref{lem:key.lemma}, and then global existence in $L^\ell$ comes as a corollary, thanks to Theorem~\ref{thm:local.p.superfujita}.
\end{proof}

\subsection{Global existence outside the distinguished range}

We will now use the result for $q$ satisfying~\eqref{eq:distinguished.range}  to prove the existence of global $L^q$-solutions, whenever $p\ge p_f$, outside the distinguished range. Let us recall that $p\ge p_f$ if and only if $\ell\ge 1$.

\subsubsection{Outside the good range}

If $1\le \ell <q<p$,  then $p\in[p_f,p_{\rm c})$ and $q\in(\ell,\ell p]$. The existence of a global $L^q$-solution is therefore an immediate corollary of Theorem~\ref{thm:local.p.subcritical} together with either Theorem~\ref{teo-global existence 2} or Theorem~\ref{teo-global existence}.

\begin{coro}
Let $p\in[p_f,p_{\rm c})$, $q\in(\ell,p)$, and $u_0\in L^\ell(\R^N)\cap L^p(\R^N)$. If either~\eqref{eq:strong.condition} or~\eqref{eq-condition} hold, then the global $L^p$-solution to problem~\eqref{eq-problem} with initial datum $u_0$ is a global in time $L^q$-solution, which moreover belongs to $C([0,\infty);L^q(\mathbb{R}^N))$. 	
\end{coro}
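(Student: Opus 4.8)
The plan is to chain two previously established results: first invoke the global-existence theorems for the distinguished range to produce a global $L^p$-solution, and then use the subcritical persistence theorem of Section~\ref{sect:basic.theory.outside.GR} to transfer that solution to $L^q$.

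First I would check that the exponent $p$ itself lies in the distinguished range~\eqref{eq:distinguished.range} when viewed as the integrability exponent. Since $p\in[p_f,p_{\rm c})$, Remark~\ref{rem-gr} gives $\ell\in[1,p)$: indeed $p\ge p_f$ is equivalent to $\ell\ge1$, while $p<p_{\rm c}$ is equivalent to $\ell<p$. Consequently $\ell<p\le\ell p$, the last inequality because $\ell\ge1$, so $p\in(\ell,\ell p]$ and trivially $p\ge p$; that is, $p$ satisfies~\eqref{eq:distinguished.range}. As $u_0\in L^\ell(\R^N)\cap L^p(\R^N)$ and one of the two conditions~\eqref{eq:strong.condition} or~\eqref{eq-condition} holds, Theorem~\ref{teo-global existence 2} (under~\eqref{eq:strong.condition}) or Theorem~\ref{teo-global existence} (under~\eqref{eq-condition}, when $p>p_f$) applies with distinguished-range exponent $p$ and yields that the maximal $L^p$-solution $u$ with initial datum $u_0$ is global in time. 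When $p=p_f$ the two conditions coincide, since then $\ell=1$ and $\frac{\alpha}{p-1}-\frac{N\alpha}{2\beta}=0$, so Theorem~\ref{teo-global existence 2} covers this borderline case directly.

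Next I would pass from $L^p$ to $L^q$. Since $\ell<q<p$, interpolation between $L^\ell(\R^N)$ and $L^p(\R^N)$ shows $u_0\in L^q(\R^N)$, whence $u_0\in L^p(\R^N)\cap L^q(\R^N)$. Now $p\in[1,p_{\rm c})$ and $q\in[1,p)$, which are exactly the hypotheses of Theorem~\ref{thm:local.p.subcritical}; applying it to the global $L^p$-solution $u$ shows that $u$ is also an $L^q$-solution with the same (infinite) existence time, and moreover $u\in C([0,\infty);L^q(\R^N))$. This is precisely the assertion of the corollary.

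There is essentially no analytic obstacle here: the statement is a purely structural consequence of the earlier theorems, and the only points requiring care are the elementary verifications that $p$ falls in the distinguished range and that $u_0\in L^q(\R^N)$ by interpolation. The one bookkeeping subtlety to keep in mind is the endpoint $p=p_f$, where Theorem~\ref{teo-global existence} is stated only for $p>p_f$; this is handled by observing that at $p=p_f$ condition~\eqref{eq-condition} reduces to~\eqref{eq:strong.condition}, so Theorem~\ref{teo-global existence 2} suffices in that case.
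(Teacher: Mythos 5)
Your proof is correct and follows exactly the paper's route: the paper also obtains the corollary by noting that $p$ itself lies in the distinguished range, invoking Theorem~\ref{teo-global existence 2} or Theorem~\ref{teo-global existence} to get the global $L^p$-solution, and then transferring to $L^q$ via Theorem~\ref{thm:local.p.subcritical}. Your explicit handling of the endpoint $p=p_f$ and the interpolation giving $u_0\in L^q(\R^N)$ are welcome details that the paper leaves implicit.
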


If $q\in[1,\ell]$, the existence of a global $L^q$-solution, if $p\in[p_f,\infty)$, is an immediate corollary of Theorem~\ref{thm:local.p.superfujita} together with either Theorem~\ref{teo-global existence 2} or Theorem~\ref{teo-global existence}.

\begin{coro}
Let $p\in[p_f,\infty)$,  $q\in[1,\ell]$, $\hat q$ satisfying~\eqref{eq:distinguished.range}, and $u_0\in L^q(\mathbb{R}^d)\cap L^{\hat q}(\mathbb{R}^d)$. If either~\eqref{eq:strong.condition} or~\eqref{eq-condition} hold, then the global $L^{\hat q}$-solution to problem~\eqref{eq-problem} with initial datum $u_0$ is a global in time $L^q$-solution, which moreover belongs to $C([0,\infty);L^q(\mathbb{R}^N))$.
\end{coro}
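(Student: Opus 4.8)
The plan is to realize the claimed $L^q$-solution as the descent of the global $L^{\hat q}$-solution produced in the distinguished range, so the proof splits into three short moves: recording the integrability the distinguished-range theorems demand, producing a \emph{global} $L^{\hat q}$-solution, and transferring it down to $L^q$. First I would check that the hypotheses of Theorems~\ref{teo-global existence 2} and~\ref{teo-global existence} are met. Since $\hat q$ lies in the distinguished range we have $\hat q>\ell$, while $q\le\ell$ by assumption; hence $q\le\ell\le\hat q$, and the interpolation inequality
\[
\|u_0\|_{L^\ell(\R^N)}\le \|u_0\|_{L^q(\R^N)}^{\theta}\,\|u_0\|_{L^{\hat q}(\R^N)}^{1-\theta},\qquad \tfrac1\ell=\tfrac\theta q+\tfrac{1-\theta}{\hat q},\ \ \theta\in[0,1],
\]
shows that $u_0\in L^q(\R^N)\cap L^{\hat q}(\R^N)$ already forces $u_0\in L^\ell(\R^N)$. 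Thus $u_0\in L^\ell(\R^N)\cap L^{\hat q}(\R^N)$, which is exactly the integrability required by those theorems with the distinguished-range exponent taken to be $\hat q$ (this step matters especially under~\eqref{eq-condition}, where $u_0\in L^\ell$ is not part of the hypothesis).

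Next I would obtain global existence in $L^{\hat q}$. Because $\hat q$ is in the distinguished range it is in the good range~\eqref{eq:good.range}, so Section~\ref{sect-local existence} provides a unique maximal $L^{\hat q}$-solution $u$ with datum $u_0$. Under hypothesis~\eqref{eq:strong.condition} I would invoke Theorem~\ref{teo-global existence 2}, and under hypothesis~\eqref{eq-condition} Theorem~\ref{teo-global existence} (recalling that for $p=p_f$ the two conditions coincide and Theorem~\ref{teo-global existence 2} applies); in either case the maximal existence time is $T=\infty$, i.e. $u$ is global in time.

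Finally I would descend from $\hat q$ to $q$: applying Theorem~\ref{thm:local.p.superfujita} with this $q\in[1,\ell]$, this $\hat q$ in the distinguished range and $u_0\in L^q\cap L^{\hat q}$ yields that the $L^{\hat q}$-solution $u$ is also an $L^q$-solution with the same existence time and that $u\in C([0,T);L^q(\R^N))$; since $T=\infty$, this gives the global $L^q$-solution, continuous in $L^q$, that is claimed. The one point I expect to require care is the apparent mismatch in the range of $p$: Theorem~\ref{thm:local.p.superfujita} is stated for $p\ge p_{\rm c}$, whereas here $p\in[p_f,\infty)$ and $p_f<p_{\rm c}$. This is not a genuine obstruction, and the resolution is the crux of the argument: the proof of that theorem descends through the chain of exponents $\hat q,\ \ell,\ [\ell/p,\ell),\ [\ell/p^{2},\ell/p),\dots$, and the only place any lower bound on $p$ enters is to ensure $\ell/p^{j}\to0$ as $j\to\infty$, so that the induction reaches $q=1$ in finitely many steps. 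That requires merely $p>1$, which holds throughout $[p_f,\infty)$; hence the transfer is valid on the whole range relevant here, and the corollary follows.
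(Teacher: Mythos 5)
Your route is exactly the paper's: the authors dispose of this corollary in one line, as an immediate consequence of Theorem~\ref{thm:local.p.superfujita} combined with Theorem~\ref{teo-global existence 2} or Theorem~\ref{teo-global existence}, and your first two steps (interpolating $q\le\ell<\hat q$ to obtain $u_0\in L^\ell(\R^N)$, then invoking the distinguished-range global existence theorems at the exponent $\hat q$) are correct and supply details the paper leaves implicit.

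The difficulty lies in your resolution of the $p$-range mismatch, which is precisely the point where you add substance beyond the paper. It is not true that the hypothesis $p\ge p_{\rm c}$ in Theorem~\ref{thm:local.p.superfujita} enters only through the termination of the induction. That hypothesis is equivalent to $\ell\ge p$, and it is used in the step $q\in[\ell/p,\ell)$ of that proof, where one bounds
\begin{equation*}
\|\mathcal{N}_u(\cdot,t)\|_{L^q(\R^N)}\le\int_0^t\big\||u(\cdot,s)|^p\big\|_{L^{\ell/p}(\R^N)}\,\|Y(\cdot,t-s)\|_{L^r(\R^N)}\,{\rm d}s
\end{equation*}
by Young's convolution inequality with first exponent $\ell/p$. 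Young's inequality requires $\ell/p\ge1$; for $p\in[p_f,p_{\rm c})$ one has $\ell<p$, the exponent drops below $1$, and the step fails as written (a quasi-norm $\||u|^p\|_{L^{a}}$ with $a<1$ does not control the convolution). So on the subrange $p\in[p_f,p_{\rm c})$ you cannot simply quote Theorem~\ref{thm:local.p.superfujita} and declare the restriction harmless. The repair is easy but different from what you propose: in that subrange $q\le\ell<p\le\hat q$, so $u_0\in L^p(\R^N)$ by interpolation and $p$ itself satisfies~\eqref{eq:distinguished.range}; the global existence theorems therefore yield a global $L^p$-solution (which coincides with the $L^{\hat q}$-solution by the uniqueness argument of Theorem~\ref{teo-local uniqueness} run in $L^p\cap L^{\hat q}$), and Theorem~\ref{thm:local.p.subcritical} --- stated exactly for $p\in[1,p_{\rm c})$, $q\in[1,p)$ --- performs the descent to $L^q$ via the admissible pairing $\||u|^p\|_{L^1}\|Y\|_{L^q}$. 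To be fair, the paper's own one-line proof glosses over the same point; but since you explicitly raised it, the reason you give for dismissing it is incorrect and should be replaced by the argument above.
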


\subsubsection{Outside the distinguished range, but yet in the good range}
We yet have to show the existence of global solutions for $q\in(\ell p,\infty]$, with $p\in [p_f,\infty)$.  This will be a corollary of the following result.
\begin{teo}
Let $p\in [p_f,\infty)$,  $q\in(\ell p,\infty]$, and $u_0\in L^\ell(\R^N)\cap L^q(\R^N)$. Let $u$ be an $L^{\ell p}$-solution to~\eqref{eq-problem} with initial datum $u_0$ and existence time $T$. Then $u$ is also an $L^q$-solution with initial datum $u_0$ and existence time $T$. Moreover, $u\in C([0,T);L^q(\mathbb{R}^N))$.
\end{teo}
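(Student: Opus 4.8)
The plan is to upgrade the integrability of the given $L^{\ell p}$-solution all the way up to $L^q$ by a \emph{finite bootstrap}, in the spirit of the proof of Theorem~\ref{thm:local.p.superfujita}, but now \emph{raising} rather than lowering the exponent. First I would reduce the problem, using the decomposition~\eqref{eq:definition.v.w.signed} and Remark~\ref{rk:initial.data.Lq}, to showing that $\mathcal{N}_u\in L^\infty(0,\tau;L^q(\R^N))$ for every $\tau\in(0,T)$ together with~\eqref{eq:initial.data.Nu}. The linear part is harmless: since $\|Z(\cdot,t)\|_{L^1(\R^N)}=1$ we have $\|\mathcal{L}_{u_0}(\cdot,t)\|_{L^a(\R^N)}\le\|u_0\|_{L^a(\R^N)}$, and because $u_0\in L^\ell(\R^N)\cap L^q(\R^N)$, interpolation gives $u_0\in L^a(\R^N)$ for every intermediate exponent $a\in[\ell,q]$.

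The elementary step is a single convolution estimate. If $u\in L^\infty(0,\tau;L^a(\R^N))$ with $a\in[\ell p,q]$, then, writing $\||u(\cdot,s)|^p\|_{L^{a/p}(\R^N)}=\|u(\cdot,s)\|_{L^a(\R^N)}^p$ and applying Young's inequality together with~\eqref{eq-estimate Y},
\[
\|\mathcal{N}_u(\cdot,t)\|_{L^b(\R^N)}\le K\Big(\sup_{s\in(0,\tau)}\|u(\cdot,s)\|_{L^a(\R^N)}\Big)^p\frac{t^{1-\sigma(r)}}{1-\sigma(r)},\qquad 1-\frac1r=\frac pa-\frac1b,
\]
provided $r\in[1,p_{\rm c})$, that is, provided $\frac1b>\frac pa-\frac{2\beta}N$. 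Combined with the bound on $\mathcal{L}_{u_0}$, this promotes $u$ from $L^\infty(0,\tau;L^a)$ to $L^\infty(0,\tau;L^b)$ for any such admissible $b\in[\ell,q]$, and the factor $t^{1-\sigma(r)}\to0$ delivers~\eqref{eq:initial.data.Nu} at the level $L^b$ at the same time.

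Then I would iterate, choosing exponents $a_0=\ell p<a_1<a_2<\cdots$ with $\frac1{a_{n+1}}$ taken slightly above $\frac p{a_n}-\frac{2\beta}N$. The key arithmetic observation is that the map $x\mapsto px-\frac{2\beta}N$, acting on $x=1/a$, has its unique fixed point at $x_*=\frac1\ell$ and slope $p>1$; since the starting value $\frac1{\ell p}$ lies strictly below $\frac1\ell$, the iterates $\frac1{a_n}$ strictly decrease and tend to $-\infty$, so that after finitely many steps $\frac p{a_n}-\frac{2\beta}N$ drops below $\frac1q$ (below $0$ when $q=\infty$). At that moment one final step lands exactly on $b=q$. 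Throughout the process the intermediate exponents can be kept in $[\ell p,q]\subset[\ell,q]$, so that $u_0\in L^{a_n}(\R^N)$ by interpolation and each step is legitimate. This yields $u\in L^\infty(0,\tau;L^q(\R^N))$ for every $\tau\in(0,T)$, and since every bound is controlled by $\sup_{s\in(0,\tau)}\|u(\cdot,s)\|_{L^{\ell p}(\R^N)}<\infty$ no $L^q$ blow-up can occur before $T$, so $u$ is an $L^q$-solution with the same existence time $T$. Finally, $u\in C([0,T);L^q(\R^N))$ follows by repeating verbatim the three-piece splitting ${\rm I}+{\rm II}+{\rm III}$ of Proposition~\ref{prop:continuity}.

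The hard part is precisely the need to iterate: a single convolution with $Y$ raises the integrability exponent only by a bounded amount, because $Y(\cdot,t)\in L^r(\R^N)$ only for $r<p_{\rm c}$, a reflection of the singularity of the kernel at the origin. Unlike the easy downward direction in Theorem~\ref{thm:local.p.superfujita}, where $Y\in L^1$ lets one lower the exponent in one stroke, here one must verify both that the gain per step is genuinely positive (which requires $a>\ell$, guaranteed by $a\ge\ell p>\ell$ since $\ell\ge1$) and that the recursion reaches $q$ in finitely many steps. Keeping the intermediate exponents inside the admissible window $[\ell,q]$, so that the $L^\ell\cap L^q$ hypothesis on $u_0$ is usable at every stage, is the bookkeeping that demands care.
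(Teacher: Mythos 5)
Your proposal is correct and follows essentially the same route as the paper: a finite bootstrap of the integrability exponent via convolution with $Y$, governed by the recursion $\frac1{a_{n+1}}=\frac p{a_n}-\frac{2\beta}N$, whose iterates escape to $-\infty$ because the starting point $\frac1{\ell p}$ lies below the fixed point $\frac1\ell$ of a map with slope $p>1$ (the paper encodes exactly this recursion through the functions $f_j$ and the exponents $q_j(p)=\ell/f_j(p)$, with $p_j\searrow1$ playing the role of your finiteness claim). The only cosmetic difference is that you perturb each step slightly off the critical value, whereas the paper lands exactly on $q_j(p)$ and treats the borderline cases $q=q_j(p)$ and $q=\infty$ with $p=p_j$ by separate $\varepsilon$-arguments.
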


\begin{proof}
We start by introducing some auxiliary functions and numbers. For any $j\in\mathbb{N}\cup\{0\}$ and $p\in [1,\infty)$ we define $f_j(p)=1-p^j(p-1)$. For any fixed $j$, the function $f_j$ is strictly decreasing in $p$, and has a unique root, that we denote by $p_j$, so that $f_j(p)>0$ for $p\in[1,p_j)$ and $f_j(p)<0$ for $p>p_j$. For instance, $p_0=2$, $p_1=(1+\sqrt5)/2$. On the other hand, for any fixed $p\in (1,\infty)$ the function $j\to f_j(p)$ is strictly decreasing in $j$ and goes to $-\infty$ as $j\to\infty$. Therefore, $p_j\searrow 1$ as $j\to\infty$, so that $p_j<p_f$ if $j$ is large enough. While $p_j>p_f$, which occurs at least for $j=0,1$, we define $q_j:[p_f,p_j)\to\mathbb{R}_+$ by $q_j(p)=\ell/f_j(p)$. An easy analysis of derivatives shows that $pf_j(p)\in (0,1)$ if $p\in[p_f,p_j)$, so that $\ell p< q_j(p)$ in that interval.

Let $r_0$ such that $1-\frac1{r_0}=\frac1\ell-\frac1 q$.   Since $q>\ell p\ge \ell p_f>\ell$, then $1-\frac1{r_0}>0$. Besides,
$$
\frac1\ell-\frac1 q=1-\frac1{r_0}<\frac{2\beta}N=\frac{p-1}\ell
$$
if and only if $\frac1q>\frac{f_0(p)}\ell=\frac1{q_0(p)}$. This holds, when $q>\ell p$, if and only if
$$
 q\in(\ell p,\infty]\text{ if } p>p_0, \quad   q\in(\ell p,\infty) \text{ if } p=p_0, \quad   q\in(\ell p,q_0(p)) \text{ if } p\in [p_f,p_0).
$$
Under these restrictions on $q$, if $0<t<\tau<T$, then, using~\eqref{eq-estimate Y},
$$
\|\mathcal{N}_u(\cdot,t)\|_{L^ q (\R^N)}\le \int_0^t\||u(\cdot,s)|^p\|_{L^\ell(\R^N)}\|Y(\cdot,t-s)\|_{L^{r_0}(\R^N)}\,{\rm d}s\le
K\sup_{s\in(0,\tau)}\|u(\cdot,s)\|_{L^{\ell p}(\R^N)}^p \frac{\tau^{1-\sigma(r_0)}}{1-\sigma(r_0)},
$$	
whence the result.

To cover the case $p=p_0=2$, $q=\infty$, we take $\tilde r_0$ such that $1-\frac1{\tilde r_0}=\frac1{2\ell}=\frac{\beta}N\in (0,\frac{2\beta}N)$. Using that $u$ is an  $L^{2 \ell p_0}$-solution (see the previous step) and also estimate~\eqref{eq-estimate Y}, for $0<t<\tau< T$ we have
$$
\|\mathcal{N}_u(\cdot,t)\|_{L^\infty(\R^N)}\le K\sup_{s\in(0,\tau)}\|u(\cdot,s)\|_{L^{2\ell p_0}(\R^N)}^{p_0} \frac{\tau^{1-\sigma(\tilde r_0)}}{1-\sigma(\tilde r_0)}.
$$

We next consider the case $p\in [p_f,p_0)$, $q=q_0(p)$. Let $\varepsilon\in(0,\frac{q_0(p)}{\ell p}-1)$, so that $\frac{q_0(p)}{1+\varepsilon}\in (\ell p,q_0(p))$. Since $u_0\in L^{\ell}(\R^N)\cap L^{q_0(p)}(\R^N)$, then $u_0\in L^{\frac{q_0(p)}{1+\varepsilon}}(\R^N)$, and by a previous step we already know that $u$ is an  $L^{\frac{q_0(p)}{1+\varepsilon}}$-solution.  Let $\bar r_0$ such that $1-\frac1{\bar r_0}=\frac{p(1+\ep)}{q_0(p)}-\frac1{q_0(p)}$. Assuming in addition that
$$
\ep\in\Big(0,\frac{p-1}p\big(\frac{q_0(p)}\ell-1\big)\Big),
$$
then $1-\frac1{\bar r_0}\in (0,\frac{2\beta}N)$. Therefore, using once more~\eqref{eq-estimate Y}, for $0<t<\tau< T$ we  deduce that
$$
\|\mathcal{N}_u(\cdot,t)\|_{L^{q_0(p)}(\R^N)}\le K\sup_{s\in(0,\tau)}\|u(\cdot,s)\|_{L^{\frac{q_0(p)}{1+\varepsilon}}(\R^N)}^p \frac{\tau^{1-\sigma(\bar r_0)}}{1-\sigma(\bar r_0)}.
$$

We now proceed by iteration, until $p_j\le p_f$, something that will happen after a finite number of steps, starting from the already studied case $j=0$. So we assume that $p_{j-1}>p_f$, and we have yet to cover the case $p\in[p_f,p_{j-1})$, $q>q_{j-1}(p)$.

Let $r_j$ such that $1-\frac1{r_j}=\frac p{q_{j-1}(p)}-\frac1 q$.   Since $q>q_{j-1}(p)$, then $1-\frac1{r_0}>0$. Besides,
$$
\frac p{q_{j-1}(p)}-\frac1 q=1-\frac1{r_j}<\frac{2\beta}N=\frac{p-1}\ell
$$
if and only if $\frac1q>\frac p{q_{j-1}(p)}-\frac{p-1}\ell=\frac{pf_{j-1}(p)+1-p}\ell=\frac{f_j(p)}\ell=\frac1{q_j(p)}$. This holds, when $q>\ell p$, if and only if
$$
 q\in(\ell p,\infty]\text{ if } p>p_j, \quad   q\in(\ell p,\infty) \text{ if } p=p_j, \quad   q\in(\ell p,q_j(p)) \text{ if } p\in [p_f,p_j),
$$
and these cases are covered reasoning as above.

To deal with the case $p=p_j$, $q=\infty$, we take $\tilde r_j$ such that $1-\frac1{\tilde r_j}=\frac1{2\ell}=\frac{\beta}N\in (0,\frac{2\beta}N)$. Using that $u$ is an  $L^{2 \ell p_j}$-solution, something that we have just proved, we conclude, reasoning as in the case $j=0$, that $u$ is an $L^\infty$-solution.

To complete the argument we have to consider the case $p\in [p_f,p_j)$, $q=q_j(p)$. We take
$$
\varepsilon\in\Big(0,\min\Big\{\big(\frac{q_j(p)}{\ell p}-1\big),\frac{p-1}p\big(\frac{q_j(p)}\ell-1\big)\Big\}\Big),
$$
and $\bar r_j$ such that $1-\frac1{\bar r_j}=\frac{p(1+\ep)}{q_j(p)}-\frac1{q_j(p)}$ and reason as in the case $j=0$ to conclude that $j$ is an $L^{q_j(p)}$-solution.

We omit the proof of continuity, since it is similar to that of Proposition~\ref{prop:continuity}.
\end{proof}

\begin{coro}
Let $p\in [p_f,\infty)$,  $q\in(\ell p,\infty]$, and $u_0\in L^\ell(\R^N)\cap L^q(\R^N)$. If either~\eqref{eq:strong.condition} or~\eqref{eq-condition} hold, then the global $L^{\ell p}$-solution to problem~\eqref{eq-problem} with initial datum $u_0$ is a global in time $L^q$-solution, which moreover belongs to $C([0,\infty);L^q(\mathbb{R}^N))$. 	
\end{coro}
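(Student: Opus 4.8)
The plan is to obtain this corollary by stitching together the global existence of the $L^{\ell p}$-solution with the upgrading theorem just proved, so that essentially no new estimates are needed. The first thing I would check is that $\ell p$ lies in the distinguished range~\eqref{eq:distinguished.range}. Indeed, since $p\ge p_f>1$ we have $\ell\ge1$, whence $\ell p>\ell$, so $\ell p\in(\ell,\ell p]$; and the requirement $\ell p\ge p$ is precisely the condition $\ell\ge1$, i.e.\ $p\ge p_f$. Thus $\ell p$ satisfies~\eqref{eq:distinguished.range}, and the global existence theorems for the distinguished range are applicable with the role of $q$ there played by $\ell p$.

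Next I would secure the integrability of the datum at the exponent $\ell p$. Since $\ell<\ell p<q$ (with the obvious modification when $q=\infty$), H\"older's inequality gives the interpolation inclusion $L^\ell(\R^N)\cap L^q(\R^N)\subset L^{\ell p}(\R^N)$, so $u_0\in L^\ell(\R^N)\cap L^{\ell p}(\R^N)$. Consequently, if~\eqref{eq:strong.condition} holds I would invoke Theorem~\ref{teo-global existence 2}, and if~\eqref{eq-condition} holds I would invoke Theorem~\ref{teo-global existence} (using that at the borderline $p=p_f$ the two conditions coincide, so Theorem~\ref{teo-global existence 2} covers that case as well); either way the maximal $L^{\ell p}$-solution with datum $u_0$ is global in time, that is, it has existence time $T=\infty$.

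Finally, I would feed this global $L^{\ell p}$-solution into the theorem proved immediately above, whose hypotheses ($p\in[p_f,\infty)$, $q\in(\ell p,\infty]$, $u_0\in L^\ell(\R^N)\cap L^q(\R^N)$) are exactly the ones at hand. That theorem turns an $L^{\ell p}$-solution with existence time $T$ into an $L^q$-solution with the \emph{same} existence time $T$, and guarantees $u\in C([0,T);L^q(\R^N))$. Taking $T=\infty$ then yields a global $L^q$-solution belonging to $C([0,\infty);L^q(\R^N))$, as claimed.

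As for difficulties, there are essentially none of a technical nature: the corollary is a bookkeeping combination of three earlier results, and the only points requiring (trivial) verification are that $\ell p$ belongs to the distinguished range~\eqref{eq:distinguished.range} and the interpolation inclusion. The genuine analytic content—the smoothing estimate that promotes $L^{\ell p}$ integrability of the reaction term to $L^q$ control, carried out in the preceding theorem, together with the fixed-point and global-bound arguments behind the distinguished-range theorems—has already been established, so the only care needed here is to confirm that the exponent ranges line up and that the single datum $u_0$ meets all three sets of hypotheses at once.
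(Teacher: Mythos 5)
Your proposal is correct and is essentially the paper's intended argument: verify that $\ell p$ lies in the distinguished range, interpolate to get $u_0\in L^{\ell p}(\R^N)$, apply Theorem~\ref{teo-global existence 2} or Theorem~\ref{teo-global existence} to obtain the global $L^{\ell p}$-solution, and then upgrade it via the immediately preceding theorem. The side remarks (the $p=p_f$ borderline being covered by the coincidence of the two conditions, and the exponent bookkeeping) are exactly the right checks.
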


\section{Blow-up results}\label{sect-blow up}
 \setcounter{equation}{0}

We devote this section to the proof of a blow-up result which shows, in particular, that:
\begin{itemize}
\item for any $p\in(1,\infty)$ and $q\in[1,\infty]$ there are $L^q$-solutions that blow up in finite time;
\item all nontrivial nonnegative $L^q$-solutions of~\eqref{eq-problem} blow up if $p\in(1,p_f)$, which combined with the results of the previous section shows that $p_f$ is the Fujita exponent for this problem.
\item condition~\eqref{eq-condition} for global existence is \lq\lq sharp''.
\end{itemize}

\begin{teo}\label{nonglobal}
Let  $p\in(1,\infty)$ and  $q\in[1,\infty]$. There is a  constant $C= C(N, \alpha,\beta,p)$ such that, if $u$ is a nontrivial nonnegative $L^q$-solution to~\eqref{eq-problem} with initial datum in $L^q(\mathbb{R}^N)$ satisfying
\begin{equation}
\label{eq:condition.nonglobal}
R^{\frac{\alpha}{p-1}-\frac{N\alpha}{2\beta}}\int_{|y|<R^{\frac\alpha{2\beta}}} u_0(y)\,{\rm d}y >C\quad\text{for some }R>0,
\end{equation}
then $\limsup\limits_{t\nearrow T}\|u(\cdot,t)\|_{L^q(\R^N)}=\infty$ for some $T\in(0,\infty)$.
\end{teo}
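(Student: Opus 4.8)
My plan is to use the nonnegativity of $u$ to collapse the Duhamel identity~\eqref{eq:definition.v.w.signed} into a single superlinear Volterra inequality for the spatial average of $u$ over the natural ball, and then to show that this inequality cannot persist in time once the mean of $u_0$ is large. First I reduce to $R=1$: the equation, the scale-invariant quantity in~\eqref{eq:condition.nonglobal}, and the property of blowing up in finite time in $L^q$ are all preserved by the rescaling $u(x,t)\mapsto\lambda^{\frac{\alpha}{p-1}}u(\lambda^{\frac\alpha{2\beta}}x,\lambda t)$, under which $R\mapsto\lambda R$; so I may assume the hypothesis reads $m_0:=\int_{|y|<1}u_0(y)\,{\rm d}y>C$. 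Since $u_0\ge0$ and $Z,Y>0$, the decomposition $u=\mathcal{L}_{u_0}+\mathcal{N}_u$ gives both $u\ge\mathcal{L}_{u_0}\ge0$ and $u\ge\mathcal{N}_u$ (and $|u|^{p-1}u=u^p$). Because $F$ in~\eqref{eq:ss.structure} is a positive radially decreasing profile, $Z(z,t)\ge c\,t^{-N\alpha/2\beta}$ for $|z|<2t^{\frac\alpha{2\beta}}$; integrating $u_0$ over $\{|y|<1\}$ and using $t\ge1$ yields $u(x,t)\ge c\,m_0\,t^{-N\alpha/2\beta}$ on $\{|x|<t^{\frac\alpha{2\beta}}\}$. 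Setting $\phi(t):=\frac1{\omega_N t^{\gamma}}\int_{|x|<t^{\frac\alpha{2\beta}}}u(x,t)\,{\rm d}x$ with $\gamma:=\frac{N\alpha}{2\beta}$, this produces the decaying base bound
\[
\phi(t)\ge\phi_*\,t^{-\gamma},\qquad \phi_*:=c\,m_0,\qquad t\ge1.
\]

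The core step is to turn $u\ge\mathcal{N}_u$ into a closed inequality for $\phi$. I integrate $u\ge\mathcal{N}_u$ over $\{|x|<t^{\frac\alpha{2\beta}}\}$ and apply Fubini. For $s\in(t/2,t)$ and $|y|<s^{\frac\alpha{2\beta}}$, the positive radially decreasing profile $G$ gives $Y(\cdot,t-s)\ge c\,(t-s)^{-(1-\alpha+\frac{N\alpha}{2\beta})}$ on $\{|x-y|<(t-s)^{\frac\alpha{2\beta}}\}$, and an elementary two–ball intersection estimate shows a fixed fraction of that ball lies inside $\{|x|<t^{\frac\alpha{2\beta}}\}$; hence $\int_{|x|<t^{\frac\alpha{2\beta}}}Y(x-y,t-s)\,{\rm d}x\ge c\,(t-s)^{\alpha-1}$. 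Dropping the contributions of $s\notin(t/2,t)$ and $|y|\ge s^{\frac\alpha{2\beta}}$, and using Jensen's inequality for the inner average, $\frac1{|B_s|}\int_{B_s}u^p\ge\phi(s)^p$ with $B_s=\{|y|<s^{\frac\alpha{2\beta}}\}$, together with $s\asymp t$, I arrive at
\[
\phi(t)\ge c_0\int_{t/2}^{t}(t-s)^{\alpha-1}\phi(s)^p\,{\rm d}s,\qquad t\ge2 .
\]
The restriction $s\asymp t$ is essential here: it is exactly what keeps the scales $s^{\frac\alpha{2\beta}}$ and $t^{\frac\alpha{2\beta}}$ comparable, so that the average of $u^p$ over the $s$–ball reappears and closes the inequality.

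On the fixed interval $(1,2)$ this reads, with $\tau=t-1$ and $\Phi(\tau)=\phi(1+\tau)$, as $\Phi(\tau)\ge c_0\int_0^\tau(\tau-\sigma)^{\alpha-1}\Phi(\sigma)^p\,{\rm d}\sigma$, while the base bound gives $\Phi\ge m_1:=\phi_*2^{-\gamma}$ there. This is a superlinear singular Volterra inequality; iterating from the constant $m_1$ produces lower bounds $\Phi(\tau)\ge a_n\tau^{\theta_n}$ with $\theta_n=\alpha\frac{p^n-1}{p-1}$ and coefficients governed by Beta–function factors with $\mathrm B(\alpha,z)\sim\Gamma(\alpha)z^{-\alpha}$, whence $\log\!\big(a_n\tau^{\theta_n}\big)\sim p^n\big(\log m_1+c+\tfrac{\alpha}{p-1}\log\tau\big)$. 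Thus $\Phi$ becomes infinite at a time $\tau^{**}$ of order $m_1^{-(p-1)/\alpha}$ — precisely the blow-up time of the fractional ODE $\partial_\tau^{\alpha}\Phi=c\,\Phi^p$, reflecting the scale invariance of the kernel $\tau^{\alpha-1}$. Choosing $C$ so that $m_0>C$ forces $m_1$ large enough that $\tau^{**}<1$, so $\Phi$ blows up inside $(1,2)$. Were $u$ an $L^q$-solution reaching beyond $t=1+\tau^{**}$, the average $\phi$ would be finite there, a contradiction; hence the existence time is finite, $T\le1+\tau^{**}<2$, and since $\|u(\cdot,t)\|_{L^q}\ge(\omega_N t^{\gamma})^{1/q}\phi(t)$ by Hölder, the continuation criterion (Corollary~\ref{coro-blow up} in the good range, and its analogue otherwise) yields $\limsup_{t\nearrow T}\|u(\cdot,t)\|_{L^q}=\infty$.

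The main obstacle is the passage through the last two steps. Producing a \emph{closed} scalar inequality requires the sharp kernel \emph{lower} bounds (only upper bounds are quoted in Section~\ref{sect:estimates}, but the needed lower bounds follow from the positivity and radial monotonicity of the profiles $F,G$) together with the averaging device that enforces $s\asymp t$; and extracting a \emph{finite} blow-up time with the correct scaling $m_1^{-(p-1)/\alpha}$ is exactly what allows a single universal constant $C$ to work for all $p\in(1,\infty)$, including the supercritical range $p\ge p_f$ where small data give global solutions. As a by-product, for $p<p_f$ the exponent $\frac{\alpha}{p-1}(\ell-1)$ is negative, so one may instead let the base scale $R$ (equivalently $t_1$) tend to infinity and conclude blow-up for \emph{every} nontrivial nonnegative datum, recovering that $p_f$ is the Fujita exponent.
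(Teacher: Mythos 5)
Your argument is correct, but it takes a genuinely different route from the paper's. The paper never rescales and never iterates: it introduces the single space--time functional $J(T;b)=\int_0^T\int_{\R^N}u\,Y_b$, where $Y_b$ is a truncated, time-weighted version of the kernel $Y$. Substituting the Duhamel formula into $J$ and applying Jensen's inequality \emph{globally} on $\R^N\times(0,T)$ produces the self-improving bound $J>\bar C\,J^p\,T^{-(\alpha+\frac{N\alpha}{2\beta})(p-1)}$, hence the a priori upper bound of Proposition~\ref{prop-des2}, while the linear part $\mathcal{L}_{u_0}$ alone gives the lower bound of Proposition~\ref{noexist2}; the two clash in one shot when the rescaled mass $A(T)$ exceeds $C_1/C_2$, yielding the explicit bound $T\le 4R$. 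You instead fix $R=1$ by scaling, close a superlinear singular Volterra inequality for the spatial average $\phi(t)$ at each individual time, seed it with the lower bound coming from $\mathcal{L}_{u_0}$, and run the classical Weissler--Sugitani iteration with Beta-function bookkeeping. Both proofs rest on exactly the same two ingredients --- pointwise lower bounds for $Z$ and $Y$ on the self-similar balls (which, as you rightly note, follow from the positivity and radial monotonicity of the profiles $F$ and $G$, and are precisely what the paper exploits through $\bar G(b)$) and Jensen's inequality --- but the test-function formulation avoids the iteration entirely, whereas yours makes the underlying mechanism (a fractional ODE $\partial_\tau^\alpha\Phi\gtrsim\Phi^p$ for the average) and the scaling $\tau^{**}\sim m_1^{-(p-1)/\alpha}$ of the blow-up time transparent. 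Two minor remarks: the Volterra inequality you state ``for $t\ge2$'' in fact holds for all $t>0$, and on $(1,2)$ you are implicitly discarding the portion $s\in(t/2,1)$ of the integral, which is harmless by nonnegativity; and the final appeal to a continuation criterion outside the good range is not needed, since $\phi(t)=+\infty$ on a set of positive measure already contradicts $u\in L^\infty_{\rm loc}([0,T);L^q(\R^N))$, which is all that either proof actually uses.
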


Before proceeding to the proof of the theorem, let us give two important corollaries.

\begin{coro}
Let $p\in(1,\infty)$ and $q\in[1,\infty]$. There are $L^q$-solutions to~\eqref{eq-problem} that blow up in finite time.
\end{coro}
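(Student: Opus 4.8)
The plan is to read off the corollary from Theorem~\ref{nonglobal}: for each admissible pair $(p,q)$ I only need to exhibit one nonnegative, nontrivial initial datum for which an $L^q$-solution is known to exist and for which the mean-value condition~\eqref{eq:condition.nonglobal} holds. Since the constant $C=C(N,\alpha,\beta,p)$ in that theorem does not depend on the datum, amplifying any fixed bump will make the left-hand side of~\eqref{eq:condition.nonglobal} arbitrarily large at a conveniently chosen scale.

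Concretely, I would fix a nonnegative, nontrivial function $\phi$ supported in the unit ball $\{|y|<1\}$ (say a smooth bump), so that $\phi\in L^r(\R^N)$ for every $r\in[1,\infty]$ and $m:=\int_{|y|<1}\phi(y)\,{\rm d}y>0$, and set $u_0=\kappa\phi$ with $\kappa>0$ to be chosen. Because $\phi$ belongs to all Lebesgue spaces, the existence of an $L^q$-solution is covered in every case: if $q$ is in the good range~\eqref{eq:good.range} by Theorem~\ref{teo-local existence}; if $p\in(1,p_{\rm c})$ and $q\in[1,p)$ by Theorem~\ref{thm:local.p.subcritical}, using $u_0\in L^p\cap L^q$; and if $p\in[p_{\rm c},\infty)$ and $q\in[1,\ell]$ by Theorem~\ref{thm:local.p.superfujita}, taking $\hat q$ in the distinguished range~\eqref{eq:distinguished.range} (which is nonempty, since $p_{\rm c}>p_f$) and using $u_0\in L^q\cap L^{\hat q}$. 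By Remark~\ref{rem-gr} these cases exhaust all $q\in[1,\infty]$. In each case the resulting solution $u$ is nonnegative because $u_0\ge0$, and nontrivial because $u_0\not\equiv0$.

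Finally I would verify~\eqref{eq:condition.nonglobal} at the single radius $R=1$: there the prefactor $R^{\frac\alpha{p-1}-\frac{N\alpha}{2\beta}}$ equals $1$ and the ball of integration is $\{|y|<1\}$, so the quantity in~\eqref{eq:condition.nonglobal} is exactly $\kappa m$. Choosing $\kappa>C/m$ then forces~\eqref{eq:condition.nonglobal}, and Theorem~\ref{nonglobal} yields $\limsup_{t\nearrow T}\|u(\cdot,t)\|_{L^q(\R^N)}=\infty$ for some finite $T$, that is, finite-time blow-up. There is no genuine analytic obstacle here; the only point requiring care is the existence of an $L^q$-solution outside the good range, which is precisely why I take $\phi$ with compact support, so that the auxiliary integrability hypotheses of Theorems~\ref{thm:local.p.subcritical} and~\ref{thm:local.p.superfujita} hold automatically.
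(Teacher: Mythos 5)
Your proposal is correct and follows essentially the same route as the paper, which takes $u_0=M\chi_{B_1(0)}$, invokes Theorems~\ref{teo-local existence}, \ref{thm:local.p.subcritical} and~\ref{thm:local.p.superfujita} for existence of a nonnegative nontrivial $L^q$-solution, and makes $M$ large so that~\eqref{eq:condition.nonglobal} holds at a fixed radius. Replacing the indicator by a smooth bump and checking the condition at $R=1$ rather than at some $R>1$ are immaterial variations.
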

\begin{proof}
Let $M>0$ and $u_0=M\chi_{B_1(0)}$. Since $u_0\in L^1(\mathbb{R}^N)\cap L^\infty(\mathbb{R}^N)$,  there are $L^q$-solutions to~\eqref{eq-problem} with this initial datum for all $q\in[1,\infty]$; see theorems~\ref{teo-local existence}, \ref{thm:local.p.subcritical}, and~\ref{thm:local.p.superfujita}. For this $u_0$, any fixed value $R>1$, and $C$ as in Theorem~\ref{nonglobal},
$$
R^{\frac{\alpha}{p-1}-\frac{N\alpha}{2\beta}}\int_{|y|<R^{\frac\alpha{2\beta}}} u_0(y)\,{\rm d}y=M\omega_NR^{\frac{\alpha}{p-1}-\frac{N\alpha}{2\beta}} >C
$$
if $M$ is large enough. Hence, any $L^q$-solution with this initial datum blows up in finite time.
\end{proof}

\begin{coro}\label{pmenor}
Let $p\in(1,p_f)$ and $q\in[1,\infty]$. Let $u$ be a nontrivial nonnegative $L^q$-solution to~\eqref{eq-problem}. There exists $T\in(0,\infty)$ such that $\limsup\limits_{t\nearrow T}\|u(\cdot,t)\|_{L^q(\R^N)}=\infty$.
\end{coro}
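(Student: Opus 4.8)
The plan is to deduce the corollary directly from Theorem~\ref{nonglobal}: I would show that when $p\in(1,p_f)$ the hypothesis~\eqref{eq:condition.nonglobal} is automatically met by every nontrivial nonnegative $L^q$-solution, so that blow-up is forced. The mechanism is the sign of the exponent of $R$ in~\eqref{eq:condition.nonglobal}. Since $p_f=1+\frac{2\beta}N$, the inequality $p<p_f$ is equivalent to $p-1<\frac{2\beta}N$, hence to $\frac1{p-1}>\frac N{2\beta}$, so that
\[
\frac\alpha{p-1}-\frac{N\alpha}{2\beta}=\alpha\Big(\frac1{p-1}-\frac N{2\beta}\Big)>0,
\]
as already remarked in the introduction.

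Next I would check that the initial datum has positive mass. The nonnegativity of $u$ forces $u_0\ge0$, and nontriviality forces $u_0\not\equiv0$: were $u_0\equiv0$, the solution would coincide with the trivial one — in the good range by the uniqueness of Theorem~\ref{teo-local uniqueness}, and for $q$ outside the good range through the reduction results of Section~\ref{sect:basic.theory.outside.GR} — contradicting $u\not\equiv0$. Being nonnegative and not identically zero, $u_0$ satisfies $\int_{\R^N}u_0>0$, and by monotone convergence the truncated masses increase,
\[
\int_{|y|<R^{\frac\alpha{2\beta}}}u_0(y)\,{\rm d}y\nearrow\int_{\R^N}u_0(y)\,{\rm d}y>0\qquad\text{as }R\to\infty,
\]
so they stay above a fixed positive constant for all large $R$.

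Finally I would let $R\to\infty$ in the left-hand side of~\eqref{eq:condition.nonglobal}. It is the product of $R^{\frac\alpha{p-1}-\frac{N\alpha}{2\beta}}\to+\infty$ (by the positivity of the exponent) with a quantity bounded below by a positive constant, hence it diverges to $+\infty$; in particular it surpasses the threshold $C=C(N,\alpha,\beta,p)$ for some $R>0$, so~\eqref{eq:condition.nonglobal} holds. Theorem~\ref{nonglobal} then produces a finite time $T\in(0,\infty)$ with $\limsup_{t\nearrow T}\|u(\cdot,t)\|_{L^q(\R^N)}=\infty$, which is the assertion. The computation itself is routine; the only point requiring the well-posedness machinery is the claim $u_0\not\equiv0$, i.e.\ that no nontrivial nonnegative solution can emanate from zero initial data. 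This is immediate from uniqueness in the good range and reducible to that case otherwise, and it is where I expect the only real care to be needed. The restriction $p<p_f$ enters solely through the sign of the exponent of $R$.
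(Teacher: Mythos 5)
Your proof is correct and is essentially the paper's own argument: since $p<p_f$ makes the exponent $\frac{\alpha}{p-1}-\frac{N\alpha}{2\beta}$ strictly positive, the left-hand side of~\eqref{eq:condition.nonglobal} grows without bound as $R\to\infty$ once the truncated mass of $u_0$ is bounded below by a positive constant, and Theorem~\ref{nonglobal} then forces blow-up. The only place you go beyond the paper is the justification that $u_0\not\equiv0$; be aware that the reduction theorems of Section~\ref{sect:basic.theory.outside.GR} do not by themselves yield uniqueness of $L^q$-solutions outside the good range (the paper even lists possible non-uniqueness there as an open problem), but since the paper's own proof silently assumes that $u_0$ has positive mass on some ball, this caveat does not separate your argument from theirs.
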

\begin{proof}
Let $\bar R>0$ such that $\int_{|y|<\bar R^{\frac\alpha{2\beta}}} u_0(y)\,{\rm d}y  =B>0$ and $C>0$ as in Theorem~\ref{nonglobal}. Since $p<p_f$, then  ${\frac{\alpha}{p-1}-\frac{N\alpha}{2\beta}}>0$, and~\eqref{eq:condition.nonglobal} holds for any $R>\bar R$ such that $R^{\frac{\alpha}{p-1}-\frac{N\alpha}{2\beta}}B>C$.
\end{proof}

From now on we always assume, without further mention, that $u$ is an nontrivial nonnegative $L^q$-solution to~\eqref{eq-problem} with an initial datum $u_0\in L^q(\mathbb{R}^N)$ for some $q\in[1,\infty]$.

The proof of Theorem~\ref{nonglobal}, which is organized in three lemmas, goes as follows. Let $G$ be the profile of the kernel $Y$; see~\eqref{eq:ss.structure}. For any $b>0$ and $T>0$ we define
\begin{gather*}
J(T;b)= \int_0^{T}\int_{\R^N} u(x,t) Y_b(x,T-t)\,{\rm d}x{\rm d}t,\quad\text{where}
\\
Y_b(x,T-t)=\chi_{\{|x|<(T-t)^{\frac{\alpha}{2\beta}}\}} (T-t)^{\frac{p\alpha}{p-1} -1}\bar G(b)\quad\text{and }
\bar G(b):=\min_{\{|\xi|<b \}} G(\xi).
\end{gather*}

We first prove that $J$ is finite as long as  $\|u(\cdot,t)\|_{L^q(\R^N)}$ remains bounded.

\begin{prop}\label{prop-existencia}
Let $p>1$ and $b>0$. If $\|u(\cdot,t)\|_{L^q(\R^N)}\le C$ for $t\in[0,T)$, then $J(T;b)$ is finite.
\end{prop}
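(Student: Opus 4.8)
The plan is to estimate $J(T;b)$ directly, separating the spatial and temporal integrations and controlling the inner space integral by H\"older's inequality together with the assumed uniform $L^q$ bound. Inserting the definition of $Y_b$, the quantity to be bounded is
$$
J(T;b)=\bar G(b)\int_0^{T}(T-t)^{\frac{p\alpha}{p-1}-1}\Big(\int_{|x|<(T-t)^{\frac{\alpha}{2\beta}}}u(x,t)\,{\rm d}x\Big)\,{\rm d}t,
$$
where $\bar G(b)$ is a fixed finite positive constant. The inner integral ranges over a ball whose radius shrinks to $0$ as $t\nearrow T$, so the first task is to estimate it uniformly in $t$.

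First I would apply H\"older's inequality with exponents $q$ and its conjugate $q'$, so that for each $t\in[0,T)$,
$$
\int_{|x|<(T-t)^{\frac{\alpha}{2\beta}}}u(x,t)\,{\rm d}x\le \|u(\cdot,t)\|_{L^q(\R^N)}\,\big|\{|x|<(T-t)^{\frac{\alpha}{2\beta}}\}\big|^{1-\frac1q}\le C\,\omega_N^{1-\frac1q}\,(T-t)^{\frac{N\alpha}{2\beta}(1-\frac1q)}.
$$
This covers the endpoint cases as well: for $q=1$ the spatial factor is simply the full $L^1$ norm (bounded by $C$), and for $q=\infty$ it is the $L^\infty$ norm times the measure of the ball. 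Substituting this estimate yields
$$
J(T;b)\le \bar G(b)\,C\,\omega_N^{1-\frac1q}\int_0^{T}(T-t)^{\gamma}\,{\rm d}t,\qquad \gamma:=\frac{p\alpha}{p-1}-1+\frac{N\alpha}{2\beta}\Big(1-\frac1q\Big).
$$

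The remaining step is to verify that this time integral converges, i.e.\ that $\gamma>-1$. Writing $\gamma+1=\frac{p\alpha}{p-1}+\frac{N\alpha}{2\beta}(1-\frac1q)$, both summands are nonnegative and the first is strictly positive because $p>1$ and $\alpha>0$; hence $\gamma>-1$ for every $q\in[1,\infty]$, and $\int_0^{T}(T-t)^{\gamma}\,{\rm d}t=\frac{T^{\gamma+1}}{\gamma+1}<\infty$, giving the finiteness of $J(T;b)$. I do not expect a serious obstacle here: the only delicate point is the integrability of the kernel factor $(T-t)^{\frac{p\alpha}{p-1}-1}$ near $t=T$, but this singularity is already integrable on its own since its exponent exceeds $-1$, and the extra spatial gain $(T-t)^{\frac{N\alpha}{2\beta}(1-1/q)}$ only improves the situation. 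Thus the statement reduces to H\"older's inequality plus the elementary fact that $\frac{p\alpha}{p-1}>0$.
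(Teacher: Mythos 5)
Your proof is correct and follows essentially the same route as the paper: the paper bounds $J(T;b)\le\int_0^T\|u(\cdot,t)\|_{L^q}\|Y_b(\cdot,T-t)\|_{L^r}\,{\rm d}t$ with $\frac1r+\frac1q=1$, which is exactly your H\"older step with the ball's measure giving the factor $(T-t)^{\frac{N\alpha}{2\beta}(1-\frac1q)}$. Your explicit verification that the resulting time exponent exceeds $-1$ is a welcome detail the paper leaves implicit.
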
	
\begin{proof}
Let $r$ be such that $1/r+1/ q =1$. Notice that $r\in[1,\infty]$. Then
$$
J(T;b)\leq \int_0^{T} \|u(\cdot,t)\|_ q \|Y_b(\cdot,T-t)\|_r\,{\rm d}t.
$$
Since $\|Y_b(\cdot,T-t)\|_{L^r(\R^N)}=\bar G(b)C(N,r)(T-t)^{\frac{N\alpha}{2\beta r}}(T-t)^{\frac {p\alpha}{p-1}-1}$, then $J(T;b)<\infty$.
\end{proof}
The next step is to obtain an estimate from above for $J(T;b)$, whenever this quantity is finite, independent of the initial data.

\begin{prop}\label{prop-des2} If $b\ge 2^{\frac \alpha{2\beta}+1}$, there exists $C_1=C_1(N,\alpha,\beta,p,b)>0$ such that,  if $J(T;b)<\infty$, then
\begin{equation}
\label{eq:estimate.J.above}
J(T;b)< C_1 T^{\alpha+\frac{N\alpha}{2\beta}}.
\end{equation}
\end{prop}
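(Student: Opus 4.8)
The plan is to let the reaction term make $J$ bound itself, so that the resulting estimate is automatically independent of the initial datum. Since $u\ge0$ and the datum is nonnegative, the Duhamel formula~\eqref{eq-formula} gives the pointwise lower bound $u(x,t)\ge\mathcal{N}_u(x,t)=\int_0^t\int_{\R^N}u^p(y,s)Y(x-y,t-s)\,{\rm d}y{\rm d}s$. Inserting this into the definition of $J(T;b)$ and interchanging the order of integration by Tonelli (every integrand is nonnegative), I would first arrive at
\begin{equation*}
J(T;b)\ge\int_0^T\int_{\R^N}u^p(y,s)\Big[\int_s^T\int_{\R^N}Y(x-y,t-s)\,Y_b(x,T-t)\,{\rm d}x{\rm d}t\Big]\,{\rm d}y{\rm d}s.
\end{equation*}

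The heart of the argument is a reproduction estimate for the bracketed space-time convolution, namely that it dominates $c_0(T-s)^\alpha Y_b(y,T-s)$. To prove it I would discard all of the time integral except $t\in(\tfrac{s+T}2,T)$, where $T-t<\tfrac{T-s}2$ and $t-s>\tfrac{T-s}2$. For $|x|<(T-t)^{\alpha/2\beta}$ and $|y|<(T-s)^{\alpha/2\beta}$ (for $|y|$ larger, $Y_b(y,T-s)=0$ and there is nothing to prove) this forces $|x-y|<(1+2^{\alpha/2\beta})(\tfrac{T-s}2)^{\alpha/2\beta}\le 2^{\alpha/2\beta+1}(t-s)^{\alpha/2\beta}$, so the hypothesis $b\ge2^{\alpha/2\beta+1}$ gives $|x-y|<b\,(t-s)^{\alpha/2\beta}$; since $G$ is radially decreasing, $G\big((x-y)(t-s)^{-\alpha/2\beta}\big)\ge\bar G(b)$ there. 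Using the self-similar form~\eqref{eq:ss.structure} of $Y$, the lower bound $(t-s)^{-(1-\alpha+N\alpha/2\beta)}\ge(T-s)^{-(1-\alpha+N\alpha/2\beta)}$ on this range, the explicit value of $Y_b$, and $\int_{|x|<(T-t)^{\alpha/2\beta}}{\rm d}x=\omega_N(T-t)^{N\alpha/2\beta}$, a one-variable integration in $t$ produces exactly the power $(T-s)^{\frac{p\alpha}{p-1}+\alpha-1}=(T-s)^\alpha\cdot\bar G(b)^{-1}Y_b(y,T-s)\cdot\bar G(b)$, i.e.\ the claimed bound. This yields $J(T;b)\ge c_0\int_0^T\int_{\R^N}u^p(y,s)(T-s)^\alpha Y_b(y,T-s)\,{\rm d}y{\rm d}s$.

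Finally I would close the estimate by Hölder. Writing $Y_b=\big((T-s)^\alpha Y_b\big)^{1/p}\big((T-s)^{-\alpha/(p-1)}Y_b\big)^{(p-1)/p}$ and applying Hölder with exponents $p$ and $p/(p-1)$ to $J=\int_0^T\int_{\R^N}u\,Y_b$,
\begin{equation*}
J(T;b)\le\Big(\int_0^T\int_{\R^N}u^p(T-s)^\alpha Y_b\Big)^{\frac1p}\Big(\int_0^T\int_{\R^N}(T-s)^{-\frac{\alpha}{p-1}}Y_b\Big)^{\frac{p-1}p}.
\end{equation*}
The first factor is at most $(c_0^{-1}J)^{1/p}$ by the previous step. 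The second factor is a pure scaling computation: the exponent of $(T-s)$ inside is $-\tfrac{\alpha}{p-1}+\tfrac{p\alpha}{p-1}-1+\tfrac{N\alpha}{2\beta}=\alpha-1+\tfrac{N\alpha}{2\beta}>-1$, so it equals $\big(c_1 T^{\alpha+\frac{N\alpha}{2\beta}}\big)^{(p-1)/p}$. Since $J(T;b)<\infty$ by hypothesis, dividing by $J^{1/p}$ and raising to the power $p/(p-1)$ gives $J(T;b)\le c_0^{-1/(p-1)}c_1\,T^{\alpha+\frac{N\alpha}{2\beta}}$, which is~\eqref{eq:estimate.J.above} with $C_1=c_0^{-1/(p-1)}c_1$.

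The routine parts are the two scaling integrals and the Hölder splitting; the step I expect to be delicate is the reproduction estimate, precisely because it is where the geometry of the supports and the constraint $b\ge2^{\alpha/2\beta+1}$ interact: one must localize the time integral so that the singular kernel $Y$ can be replaced by its self-similar lower bound $\bar G(b)(t-s)^{-(1-\alpha+N\alpha/2\beta)}$ uniformly on the region that carries $Y_b$, and check that the resulting power of $(T-s)$ matches $(T-s)^\alpha Y_b$ exactly.
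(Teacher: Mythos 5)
Your proof is correct and follows essentially the same route as the paper: lower-bound $J$ by the contribution of the nonlinear term, localize the inner time integral (you take $t>\frac{s+T}{2}$, the paper takes $t>s+(\frac2b)^{2\beta/\alpha}(T-s)$) so that the support constraints force $|x-y|<b(t-s)^{\frac{\alpha}{2\beta}}$ and $Y$ can be replaced by $\bar G(b)(t-s)^{\alpha-1-\frac{N\alpha}{2\beta}}$, arriving at $J\gtrsim\int\int u^p(T-s)^{\alpha}Y_b$, and then close by superlinearity. The only cosmetic difference is that you use H\"older with exponents $p$ and $p/(p-1)$ where the paper uses Jensen with the probability measure $\hat Y_b/\|\hat Y_b\|_{L^1}$; these are the same estimate, and your exponent bookkeeping checks out.
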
	
\begin{proof}
Since $u$ is a solution of the integral equation~\eqref{eq-formula},
\[\begin{aligned}
J(T;b)&> \int_0^{T}\int_{\R^N} \int_0^{t}\int_{\R^N}u^p(y,s)Y(x-y,t-s)\,{\rm d}y{\rm d}s\,Y_b(x,T-t)\,{\rm d}x{\rm d}t\\
&= \int_0^{T}\int_{\R^N} u^p(y,s)  \int_{s}^{T}\int_{\R^N}Y(x-y,t-s) Y_b(x,T-t)\,{\rm d}x{\rm d}t\,{\rm d}y{\rm d}s.
\end{aligned}\]
If  $s<T$, $t>s+(\frac{2}{b})^{\frac{2\beta}\alpha}(T-s)$, $|x|<(T-t)^{\frac\alpha{2\beta}}$ and $|y|< (T-s)^{\frac\alpha{2\beta}}$, then
\[|x-y|<2(T-s)^{\frac\alpha{2\beta}}<b(t-s)^{\frac\alpha{2\beta}}.\]
Hence, since $\alpha-1-\frac{N\alpha}{2\beta}<0$, if $b\ge 2^{\frac \alpha{2\beta}+1}$ we get, using the selfsimilar structure~\eqref{eq:ss.structure} of $Y$,
\begin{align*}
J(T;b)&>\bar G(b)\! \int_0^{T}\int_{|y|<(T-s)^{\frac\alpha{2\beta}}}u^p(y,s)\!\int_{s+(\frac{2}{b})^{\frac{2\beta}\alpha}(T-s)}^T
\!\int_{|x|<(T-t)^{\frac\alpha{2\beta}}}Y(x-y,t-s)(T-t)^{\frac{p\alpha}{p-1} -1}\,{\rm d}x{\rm d}t\,{\rm d}y{\rm d}s\\
	&\ge \bar G(b)^2\!\int_0^{T}\int_{|y|<(T-s)^{\frac\alpha{2\beta}}}u^p(y,s) \!\int_{s+(\frac{2}{b})^{\frac{2\beta}\alpha}(T-s)}^T\!\int_{|x|<(T-t)^{\frac\alpha{2\beta}}}(t-s)^{\alpha-1-\frac{N\alpha}{2\beta}}
(T-t)^{\frac{p\alpha}{p-1}-1}\,{\rm d}x{\rm d}t\,{\rm d}y{\rm d}s\\
&\ge C(N)\bar G(b)^2\int_0^{T}\int_{|y|<(T-s)^{\frac\alpha{2\beta}}}u^p(y,s)(T-s)^{\alpha-1-\frac{N\alpha}{2\beta}}
\int_{s+(\frac{2}{b})^{\frac{2\beta}\alpha}(T-s)}^T(T-t)^{\frac{p\alpha}{p-1}-1+\frac{N\alpha}{2\beta}}\,{\rm d}t\,{\rm d}y{\rm d}s.
\end{align*}
 We conclude that
\begin{equation*}
\label{eq:prelim.est.J.below}
\begin{gathered}
J(T;b)>C(N,\alpha,\beta,p,b)\int_{0}^{T}\int_{\R^N} u^p(y,s)  \hat Y_b(y,T-s) (T-s)^{\frac{p\alpha}{p-1}}\,{\rm d}y{\rm d}s,\quad \text{where}\\
\hat Y_b(x,T-t)= \chi_{\{|x|<(T-t)^{\frac{\alpha}{2\beta}}\}} (T-t)^{\alpha -1}\bar G(b)
\end{gathered}
\end{equation*}
for some constant $C(N,\alpha,\beta,p) >0$. Using Jensen's inequality,
\begin{align*}
J(T;b)&> C(N,\alpha,\beta,p,b)\int_{0}^{T}\int_{\R^N} u^p(y,s)  \hat Y_b(y,T-s) (T-s)^{\frac{p\alpha}{p-1}}\,{\rm d}y{\rm d}s \\
&\ge C(N,\alpha,\beta,p,b)\Big(\int_0^{T}\int_{\R^N}u(y,s)\frac{\hat Y_b(y,T-s)}{\|\hat Y_b\|_{L^1(\mathbb{R}^N\times (0,T))}} (T-s)^{\frac{\alpha}{p-1}}\,{\rm d}y{\rm d}s\Big)^p\|\hat Y_b\|_{L^1(\mathbb{R}^N\times (0,T))}\\
&=C(N,\alpha,\beta,p,b) \Big(\int_0^{T} \int_{\R^N} u(y,s) Y_b(y,T-s)\,{\rm d}y{\rm d}s\Big )^p  \frac{1}{\|\hat Y_b\|^{p-1}}.
\end{align*}
A direct computation shows that
\[\begin{aligned}
     \|\hat Y_b\|_{L^1(\mathbb{R}^N\times (0,T))}&=\bar G(b)\int_0^T\int_{|x|<(T-t)^{\frac{\alpha}{2\beta}}}(T-t)^{\alpha-1}\,{\rm d}x{\rm d}t
     =C(N)\bar G(b)\int_0^T(T-t)^{\alpha-1+\frac{N\alpha}{2\beta}}\,{\rm d}t\\
     &=C(N,\alpha,\beta,p)\bar G(b) T^{\alpha+\frac{N\alpha}{2\beta}}.
\end{aligned}\]
Hence, $J(T;b)>\bar C (N,\alpha,\beta,p,b) J(T;b)^p  T^{-({\alpha }+\frac{N\alpha}{2\beta})(p-1)}$, whence \eqref{eq:estimate.J.above}.
\end{proof}

Finally, we obtain an estimate from below for $J(T;b)$ that shows that this quantity is big for certain initial data.
\begin{prop}\label{noexist2}
There exists
 $C_2=C_2(N, \alpha,\beta,p,b)>0$  such that
 $$
 J(T;b)\ge C_2 A(T)T^{\alpha +\frac{N\alpha}{2\beta}},\quad\text{where } A(T):= T^{\frac{\alpha}{p-1}-\frac{N\alpha}{2\beta}}\int_{|y|<(T/4)^{\frac\alpha{2\beta}}} u_0(y)\,{\rm d}y.
 $$
\end{prop}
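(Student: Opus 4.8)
The plan is to discard the nonlinear contribution from the outset. Since $u\ge0$ satisfies the integral equation~\eqref{eq-formula} and both $Y$ and $u^p$ are nonnegative, the nonlinear part $\mathcal{N}_u$ is nonnegative, so that $u(x,t)\ge \mathcal{L}_{u_0}(x,t)=\int_{\R^N}u_0(y)Z(x-y,t)\,{\rm d}y$. Plugging this into the definition of $J(T;b)$ and exchanging the order of integration via Tonelli's theorem (everything being nonnegative), I would obtain
\begin{equation*}
J(T;b)\ge \int_{\R^N}u_0(y)\Big(\int_0^T\int_{\R^N}Z(x-y,t)\,Y_b(x,T-t)\,{\rm d}x{\rm d}t\Big)\,{\rm d}y.
\end{equation*}
The problem is thereby reduced to bounding the inner double integral from below, uniformly for $y$ in a suitable ball.

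Next I would shrink the domain of integration to a region where both kernels are easy to estimate from below. Concretely, I intend to keep only $t\in(T/2,3T/4)$ and $|x|<(T/4)^{\frac\alpha{2\beta}}$, and to restrict $y$ to $|y|<(T/4)^{\frac\alpha{2\beta}}$, which is precisely the ball appearing in $A(T)$. For such $t$ one has $T-t\in(T/4,T/2)$, so the indicator in $Y_b$ is automatically satisfied and, since $T-t$ is comparable to $T$, $Y_b(x,T-t)=(T-t)^{\frac{p\alpha}{p-1}-1}\bar G(b)\ge C(p,\alpha)\,T^{\frac{p\alpha}{p-1}-1}\bar G(b)$ (the lower bound holding irrespective of the sign of the exponent). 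For the kernel $Z$ I would use its self-similar form~\eqref{eq:ss.structure}: on the chosen region $|x-y|<2(T/4)^{\frac\alpha{2\beta}}$ while $t>T/2$, whence $|x-y|\,t^{-\frac\alpha{2\beta}}<2^{1-\frac\alpha{2\beta}}=:b'$, a fixed number depending only on $\alpha,\beta$. Because the profile $F$ is positive and radially decreasing, it is bounded below by $\bar F(b'):=\min_{|\xi|\le b'}F(\xi)>0$ on that set, and together with $t<3T/4$ this gives $Z(x-y,t)=t^{-\frac{N\alpha}{2\beta}}F\big((x-y)t^{-\frac\alpha{2\beta}}\big)\ge C(N,\alpha,\beta)\,T^{-\frac{N\alpha}{2\beta}}\bar F(b')$.

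Finally I would collect the powers of $T$. The spatial ball $\{|x|<(T/4)^{\frac\alpha{2\beta}}\}$ has measure comparable to $T^{\frac{N\alpha}{2\beta}}$ and the time interval $(T/2,3T/4)$ has length $T/4$, so the inner integral is bounded below by a constant (depending on $N,\alpha,\beta,p,b$) times
\begin{equation*}
T^{-\frac{N\alpha}{2\beta}}\cdot T^{\frac{p\alpha}{p-1}-1}\cdot T^{\frac{N\alpha}{2\beta}}\cdot T = T^{\frac{p\alpha}{p-1}}.
\end{equation*}
This yields $J(T;b)\ge C_2\,T^{\frac{p\alpha}{p-1}}\int_{|y|<(T/4)^{\frac\alpha{2\beta}}}u_0(y)\,{\rm d}y$, and the claim follows from the elementary identity $\frac{p\alpha}{p-1}=\frac\alpha{p-1}+\alpha$, which rewrites the right-hand side exactly as $C_2\,A(T)\,T^{\alpha+\frac{N\alpha}{2\beta}}$. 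The only delicate point is the lower bound for $Z$: it hinges on choosing the integration region so that the self-similar argument $(x-y)t^{-\frac\alpha{2\beta}}$ stays in a \emph{fixed} bounded set, where the (otherwise singular at the origin) profile $F$ admits a uniform positive lower bound. Once this is arranged, the rest is bookkeeping of exponents.
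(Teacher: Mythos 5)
Your proof is correct and follows essentially the same route as the paper's: drop the nonnegative nonlinear term, restrict to a time window of length comparable to $T$ and a spatial ball of radius comparable to $T^{\frac{\alpha}{2\beta}}$ so that the self-similar argument of $Z$ stays in a fixed bounded set where the profile has a positive lower bound, and then collect the powers of $T$. The only (immaterial) differences are that you swap the order of integration first and use the window $t\in(T/2,3T/4)$ instead of the paper's $t\in(T/4,T/2)$.
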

\begin{proof}
If  $|x|<t^{\frac\alpha{2\beta}}$  and  $|y|< t^{\frac\alpha{2\beta}}$, then $|x-y|<2t^{\frac\alpha{2\beta}}$. Hence,  if $|x|<t^{\frac\alpha{2\beta}}$,
\[\begin{aligned}u(x,t) &> \int_{\R^N} u_0 (y) Z(x-y,t)\,{\rm d}y\ge \int_{|y|< t^{\frac\alpha{2\beta}} } u_0 (y) Z(x-y,t)\,{\rm d}y\ge C(N,\alpha,\beta)  t^{-\frac{N\alpha}{2\beta}}\int_{|y|< t^{\frac\alpha{2\beta}} }u_0(y)\,{\rm d}y,
\end{aligned}\]
since $Z(x-y,t)= t^{-\frac{N\alpha}{2\beta}}G\big((x-y)t^{-\frac\alpha{2\beta}}\big)\ge t^{-\frac{N\alpha}{2\beta}}\bar G(2)$ if $|x-y|<2t^{\frac\alpha{2\beta}}$. Therefore,
$$
J(T;b)\ge C(N,\alpha,\beta)\bar G(b)  \int_{T/4}^{T/2}\int_{|y|< t^{\frac\alpha{2\beta}} }u_0(y)\,{\rm d}y\int_{|x|<{\min\{ t^{\frac\alpha{2\beta}}, (T-t)^{\frac\alpha{2\beta}}\}}}  t^{-\frac{N\alpha}{2\beta}} (T-t)^{\frac{p\alpha}{p-1} -1}\,{\rm d}x{\rm d}t.
$$
But $\min\{ t^{\frac\alpha{2\beta}}, (T-t)^{\frac\alpha{2\beta}}\}=t^{\frac\alpha{2\beta}}\ge (T/4)^{\frac\alpha{2\beta}}$ if $t\in [T/4,T/2]$. Thus,
\begin{align*}
J(T;b)& \ge \bar C(N,\alpha,\beta)\bar G(b) \int_{|y|<(T/4)^{\frac\alpha{2\beta}}} u_0(y)\,{\rm d}y  \int_{T/4}^{T/2}(T-t)^{\frac{p\alpha}{p-1} -1}\, {\rm d}t\\
&\ge C_2(N,\alpha,\beta,p,b)\int_{|y|<(T/4)^{\frac\alpha{2\beta}}} u_0(y)\,{\rm d}y \;T^{\frac{p\alpha}{p-1}}\\
&=C_2(N, \alpha,\beta,p,b)\ T^{\alpha +\frac{N\alpha}{2\beta}}\Big(T^{\frac{\alpha}{p-1}-\frac{N\alpha}{2\beta}}\int_{|y|<(T/4)^{\frac\alpha{2\beta}}} u_0(y)\,{\rm d}y \Big)  = C_2(N, \alpha,\beta,p,b) A(T)  T^{\alpha +\frac{N\alpha}{2\beta}}.\qedhere
\end{align*}
\end{proof}
We have now all the ingredients to prove Theorem~\ref{nonglobal}.
\begin{proof}[Proof of Theorem~\ref{nonglobal}]
We take $b\ge 2^{\frac \alpha{2\beta}+1}$. If $\|u(\cdot,t)\|_{L^q(\R^N)}$ is bounded in $[0,T)$, then $J(T;b)$ is well defined and finite; see Proposition~\ref{prop-existencia}. Hence, $J(T;b)< C_1 T^{\alpha+\frac{N\alpha}{2\beta}}$, with $C_1$ as in Proposition~\ref{prop-des2}. If $A(T)\ge C_1/C_2$, we get a contradiction; see~Proposition~\ref{noexist2}.  This inequality for $A(T)$ will hold true for $T=4R>0$ if~\eqref{eq:condition.nonglobal} holds for some $R>0$ and $C=4^{\frac{N\alpha}{2\beta}-\frac{\alpha}{p-1}}C_1/C_2$.
\end{proof}
\begin{rem}
The proof of Theorem~\ref{nonglobal} gives an upper bound for the existence time when~\eqref{eq:condition.nonglobal} holds. Indeed, $T\le 4R$.
\end{rem}

\section*{Acknowledgments}

This research was supported by the European Union's Horizon 2020 research and innovation programme under the Marie Sklodowska-Curie grant agreement No.\,777822, and by grant CEX2019-000904-S, funded by MCIN/AEI/10.13039/501100011033.

\noindent C. Cort\'azar was also supported by  FONDECYT grant 1190102 (Chile).

\noindent F. Quir\'os was also supported by grants PID2020-116949GB-I00 and RED2022-134784-T, both of them funded by MCIN/AEI/10.13039/501100011033, and by the Madrid Government (Comunidad de Madrid – Spain) under the multiannual Agreement with UAM in the line for the Excellence of the University Research Staff in the context of the V PRICIT (Regional Programme of Research and Technological Innovation).

\noindent N. Wolanski was also supported by  ANPCyT PICT2016-1022.

\section*{Data availability statement}

The manuscript has no associated data.


 \end{document}